\newcommand{\D}{\mathcal{D}}
\renewcommand{\vec}[1]{\mathbf{{#1}}}
\newcommand{\mat}[1]{\mathbf{{#1}}}
\newcommand{\ipar}{\theta}
\newcommand{\dpar}{\boldsymbol{\theta}}
\newcommand{\GM}[2]{\mathcal{N}({#1},{#2})}
\newcommand{\obs}{\vec{d}}
\newcommand{\pH}{\mat{H}_0}
\date{\today}
\title{Randomized Matrix-free Trace and Log-Determinant Estimators\thanks{The third author
acknowledges the support from the XDATA Program of the Defense Advanced
Research Projects Agency (DARPA), administered through Air Force
Research Laboratory contract FA8750-12-C-0323 FA8750-12-C-0323.}}
\author{Arvind K. Saibaba \and Alen Alexanderian \and Ilse C.F. Ipsen}
\institute{Arvind K. Saibaba\at
Department of Mathematics, North Carolina State University, Raleigh, NC 27695-8205, USA\\\email{asaibab@ncsu.edu } \\
\texttt{http://www4.ncsu.edu/{\char'176}asaibab/} 
\and
Alen Alexanderian\at
Department of Mathematics, North Carolina State University, Raleigh, NC 27695-8205, USA \\
\email{alexanderian@ncsu.edu}\\
 \texttt{http://www4.ncsu.edu/{\char'176}aalexan3/}
 \and
Ilse C. F. Ipsen\at
Department of Mathematics, North Carolina State University, Raleigh, NC 27695-8205, USA \\
\email{ipsen@ncsu.edu}\\
 \texttt{http://www4.ncsu.edu/{\char'176}ipsen/}
}
\begin{document}
\maketitle
\begin{abstract}
We present randomized algorithms for estimating the trace and determinant of Hermitian positive
semi-definite matrices. The algorithms are based on subspace iteration, and access the matrix only
through matrix vector products. We analyse the error due to randomization,
for starting guesses whose elements are Gaussian or Rademacher random variables. The analysis
is cleanly separated into a structural (deterministic) part followed by a probabilistic part.  Our absolute bounds for the expectation and concentration of the estimators are non-asymptotic and informative
even for matrices of low dimension. For the trace estimators, we also present asymptotic bounds 
on the number of samples (columns of the starting guess) required to achieve a user-specified relative error. Numerical experiments illustrate the performance of the estimators and the
tightness of the bounds on low-dimensional matrices; and on
a challenging application in uncertainty quantification arising from  Bayesian optimal experimental
design.
\end{abstract}

\keywords{trace \and determinant\and  eigenvalues\and  subspace iteration\and  QR factorization\and  Monte Carlo methods\and  Gaussian random
matrices\and  Rademacher random matrices\and  concentration inequalities\and  uncertainty quantification\and Bayesian inverse problems\and  optimal experimental design
}

\subclass{68W20\and 65F15\and  65F40\and  65F25\and  65F35\and  15B52\and  62F15}

\section{Introduction}
Computing the trace of high-dimensional matrices is a common problem in
various areas of applied mathematics, such as  evaluation of
uncertainty quantification measures in parameter estimation and inverse problems \cite{HaberHoreshTenorio08,HaberMagnantLuceroEtAl12,AlexanderianPetraStadlerEtAl14,SaibabaKitanidis15}, 
and generalized cross validation (GCV) \cite{Wahba77,Wahba1990,GolubVonMatt97}. 

Our original motivation came from trace and
log-determinant computations of high-dimensional operators in  Bayesian
optimal experimental design (OED) \cite{ChalonerVerdinelli95}. Of particular
interest is OED for Bayesian inverse problems that are constrained by partial
differential equations (PDEs) with high-dimensional parameters. In
Section~\ref{s_uq} we give an example of such a Bayesian inverse
problem and illustrate the evaluation of OED criteria with our algorithms. 

Trace and determinant computations are straightforward if the matrices are  explicitly defined, 
and one has direct access to 
individual matrix entries. The trace is computed as
the sum of the diagonal elements, while the determinant can be computed as the
product of the diagonal elements from a triangular factor \cite[Section 14.6]{Hig02}.
However, if the matrix dimension is large, or explicit access to individual entries is expensive, 
alternative methods are needed.

Here we
focus on computing the trace and log-determinant of implicitly defined matrices, where 
the matrix can be accessed only through matrix vector products.
We present randomized estimators for $\trace(\ma)$ and\footnote{The square
matrix $\mi$ denotes the identity, with ones on the diagonal and zeros
everywhere else.} $\logdet(\mi+\ma)$ for Hermitian, or real symmetric, positive
semi-definite matrices $\ma\in\cnn$.

\subsection{Main features of our estimator}
Our estimators are efficient and easy to implement, as they are  based on
randomized subspace iteration;  and they are  accurate
for many matrices of interest. 
Unlike Monte Carlo estimators, see Section~\ref{ss_rel}, whose variance 
depends on individual matrix entries, our error bounds rely on eigenvalues. 
To this end we need to assume that the
matrix has a well-defined dominant eigenspace, with a large eigenvalue  gap
whose location is known.  Our bounds quantify the effect of the starting guess on
the dominant eigenspace, and are informative even in the non-asymptotic regime,
for matrices of low dimension. 
Our estimators, although biased, can be much more accurate than Monte Carlo estimators.

\subsection{Contributions}
Our paper makes the following four contributions.

\subsubsection{Randomized estimators}  
Assume that the Hermitian positive semi-definite matrix $\ma\in\cnn$ has $k$ dominant 
eigenvalues separated by  a gap
from the remaining $n-k$ sub-dominant eigenvalues,
$\lambda_1\geq \cdots\geq \lambda_k\gg \lambda_{k+1}\geq\cdots\geq \lambda_n$.
The idea is to capture the dominant eigenspace associated with $\lambda_1, \ldots,\lambda_k$ 
via a low-rank approximation $\mt $ of $\ma$.
Our estimators (Section~\ref{s_alg}) for
 $\trace(\mt)\approx\trace(\ma)$ and $\logdet(\mi+\mt)\approx\logdet(\mi+\ma)$
appear to be new. Here
$\mt \equiv \mq^*\ma\mq \in \mathbb{C}^{\ell \times \ell}$ where $k \leq \ell \ll n$. The matrix
$\mq$ approximates the dominant eigenspace of $\ma$, and
is computed from $q$ iterations of subspace iteration applied
to a starting guess $\mom$, followed by the thin QR factorization of $\ma^q\mom$.

\subsubsection{Structural and probabilistic error analysis}\label{s_analysis}
We derive absolute error bounds for $\trace(\mt)$ and $\logdet(\mi+\mt)$,
for  starting guesses that are Gaussian random variables (Section~\ref{s_gr}),
and Rademacher random variables (Section~\ref{s_rm})
The derivations are cleanly separated into a ``structural'' 
(deterministic) part, followed by a probabilistic part.

\paragraph{Structural analysis (Section~\ref{s_struct}).}
These are perturbation bounds that
apply to any matrix $\mom$, be it random or deterministic.
The resulting absolute error bounds for $\trace(\mt)$ and $\logdet(\mi+\mt)$ 
imply that the estimators are accurate if: (1) the starting guess
$\mom$ has a large contribution in the dominant eigenspace;
(2) the eigenvalue gap is large; and (3) the sub-dominant eigenvalues are negligible. 

The novelty of our analysis is the focus on the eigendecomposition of $\ma$. In contrast, as 
discussed in Section~\ref{s_mc}, the analyses of Monte Carlo estimators depend on the 
matrix entries, and do not take into account the  spectral properties of $\ma$.

To understand the contribution of the random starting guess $\mom$,
let the columns of $\mU_1\in\complex^{n\times k}$ represent an orthonormal basis for the dominant eigenspace, while the columns of  $\mU_2\in\complex^{n\times (n-k)}$ represent an orthonormal basis  associated with the $n-k$ sub-dominant eigenvalues. 
The ``projections" of the starting guess on the respective eigenspaces are
are $\mom_1\equiv \mU_1^*\mom\in\complex^{k\times \ell}$ and
$\mom_2\equiv  \mU_2^*\mom\in\complex^{(n-k)\times \ell}$.

The success of $\mt$ in capturing the dominant subspace $\range(\mU_1)$ 
depends on the quantity\footnote{The superscript $\dagger$ denotes the Moore-Penrose inverse.} 
$\normtwo{\mom_2}\normtwo{\mom_1^\dagger}$.

\paragraph{Probabilistic analysis (Section~\ref{s_prob}).}
We bound the projections $\normtwo{\mom_2}$ and $\normtwo{\mom_1^\dagger}$  
for starting guesses $\mom$ that are Gaussian or Rademacher random matrices.

For Gaussian starting guesses, we present bounds for the mean (or expectation),
and concentration about the mean, based on existing bounds for the spectral norms of 
Gaussian random matrices and their pseudo-inverse.
 
For Rademacher starting guesses, we present Chernoff-type concentration inequalities,
and show that $\ell \sim (k+\log n)\log k$ samples are required to
guarantee $\rank(\mom_1) = k$ with high probability.

\subsubsection{Asymptotic efficiency}
One way to quantify the efficiency of a Monte Carlo estimator is a so-called 
$(\epsilon,\delta)$ estimator \cite{avron2011randomized},
which bounds the number of samples required to 
achieve a relative error of at most $\epsilon$ with probability at least $1-\delta$.  
Our asymptotic $(\epsilon,\delta)$ bounds (Theorem~\ref{t_asymp}) show that  
our trace estimator can require significantly fewer samples than Monte Carlo estimators.

\subsubsection{Numerical Experiments} 
Comprehensive numerical experiments corroborate the performance of our
estimators, and illustrate that our error bounds hold even in the
non-asymptotic regime, for matrices of small dimension
(Section~\ref{num_small}).  Motivated by our desire for fast and accurate
estimation of uncertainty measures in Bayesian inverse problems, we present
a challenging application from Bayesian OED (Section~\ref{s_uq}).

\subsection{Related work} \label{ss_rel}
We demonstrate that the novelty of our paper lies in both, the estimators and their analysis.

There are several  popular estimators for 
the trace of an implicit,  Hermitian positive semi-definite matrix $\ma$,  
the simplest one being a Monte Carlo estimator. It requires 
only matrix vector products with $N$ independently generated
 random vectors $\vz_j$ and computes
$$\trace(\ma) \approx \frac{1}{N} \sum_{j=1}^N \vz_j^*\ma\vz_j.$$
The original algorithm, proposed by Hutchinson~\cite{hutchinson1989stochastic},
uses Rademacher random vectors and produces an unbiased estimator.
Unbiased estimators can also be produced with other distributions, such as
Gaussian random vectors, or columns of the identity matrix that are sampled
uniformly with or without
replacement \cite{avron2011randomized,roosta2015improved}, see the detailed comparison 
in Section~\ref{s_mc}. 

Randomized matrix algorithms \cite{HMT09,Mah11} could furnish a
potential alternative for trace estimation.
Low-rank approximations of $\ma$ can be 
efficiently computed with randomized subspace iteration \cite{liberty2007randomized,martinsson2011randomized} or
Nystr\"{o}m methods \cite{gittens2013revisiting},  
and their accuracy is quantified by probabilistic error bounds in the spectral and Frobenius norms. 
Yet we were not able to find error bounds for the corresponding trace estimator in the literature.

Like our estimators, the spectrum-sweeping method \cite[Algorithm 5]{lin2015randomized} 
is based on a randomized low-rank approximation of $\ma$. However, it is designed to compute
the trace of smooth functions of Hermitian matrices
in the context of \textit{density of state} estimations in quantum physics. 
Numerical experiments illustrate that the method can be much faster than Hutchinson's 
estimator, but there is no formal convergence analysis.

A related problem is the trace computation of the matrix inverse. 
One can combine
a Hutchinson estimator $\frac{1}{N}\sum_{j=1}^N \vz_i^*\ma^{-1}\vz_i$ with quadrature rules for approximating the bilinear forms $\vz_i^*\ma^{-1}\vz_i$ \cite{bai1996some,BaiGolub97}. 
For matrices $\ma$ that are sparse, banded, or whose off-diagonal entries decay away from the main 
diagonal, one can use 
a probing method \cite{tang2012probing} to estimate  the diagonal of $\ma^{-1}$ 
with carefully selected vectors that exploit structure and sparsity.

Computation of the log-determinant is required for maximum likelihood estimation in areas like  
machine learning, robotics and spatial statistics  \cite{ZLW2008}.
This can be achieved  by applying a Monte Carlo algorithm
to the log-determinant directly \cite{BarryPace99}, or to an expansion
 \cite{PaceLeSage2004,ZLW2008}.

Alternatively one can combine the identity $\logdet(\ma) = \trace(\log(\ma))$ \cite[Section 3.1.4]{bai1996some}  
with a Monte Carlo estimator for the trace.
Since computation of $\log(\ma)$,  whether with direct or matrix-free methods,
is expensive for large $\ma$, the logarithm can be expanded into a Taylor series
\cite{boutsidis2015randomized,PaceLeSage2004,ZLW2008},
 a Chebyshev polynomial \cite{han2015large}, or a spline
\cite{anitescu2012matrix,chen2011computing}.

\section{Algorithms and main results}
We present the algorithm for randomized subspace iteration (Section~\ref{s_alg}),
followed by the main error bounds for the trace and logdet estimators (Section~\ref{s_main}),
and conclude with a discussion of Monte Carlo estimators (Section~\ref{s_mc}).

\subsection{The Algorithm}\label{s_alg}
We sketch the estimators for $\trace(\ma)$ and  $\logdet(\mi_n+\ma)$, 
for  Hermitian positive semi-definite matrices  $\ma\in\cnn$ with $k$ dominant eigenvalues.
The estimators relinquish the matrix $\ma$ of order $n$ for a
matrices~$\mt$ of smaller dimension $\ell\ll n$ computed with Algorithm~\ref{alg:randsubspace},
so that $\trace(\mt)$ is an estimator for $\trace(\ma)$,
 and $\logdet(\mi_{\ell}+\mt)$ an estimator for $\logdet(\mi_n+\ma)$. 

Algorithm~\ref{alg:randsubspace} is an idealized version of 
randomized subspace iteration. Its starting guess is a
random matrix $\mom$ with $k\leq \ell\ll n$ columns, sampled from a fixed distribution, 
that is then 
subjected to $q$ power iterations with $\ma$. A thin QR decomposition of the resulting product
 $\ma^q\mom$ produces a matrix $\mq$ with orthonormal columns. The 
 output of Algorithm~\ref{alg:randsubspace} is the $\ell\times \ell$ restriction 
 $\mt = \mq^*\ma\mq$ of $\ma$ to $\ispan(\mq)$. 

\begin{algorithm}[!ht]
\begin{algorithmic}[1]
\REQUIRE Hermitian positive semi-definite matrix $\ma\in\mathbb{C}^{n\times n}$ with
target rank $k$,\\
$\qquad$ Number of subspace iterations $q\geq 1$\\
$\qquad$ Starting guess $\mom \in \complex^{n\times \ell}$ with $k\leq \ell\leq n-k$ columns
\ENSURE Matrix $\mt\in\complex^{\ell\times \ell}$ 
\STATE Multiply $\my = \ma^q\mom$ 
\STATE  Thin QR factorization $\my=\mq\mr$
\STATE Compute $\mt = \mq^*\ma\mq$.
\end{algorithmic}
\caption{Randomized subspace iteration (idealized version)}
\label{alg:randsubspace}
\end{algorithm}

The idealized subspace iteration in Algorithm~\ref{alg:randsubspace} can be 
numerically unstable.  The standard remedy is to alternate matrix
products and  QR factorizations \cite[Algorithm 5.2]{saad1992numerical}. 
In practice, one can trade off numerical
stability and efficiency by computing the QR factorization once every few
steps \cite[Algorithm 5.2]{saad1992numerical}. 
Throughout this paper, we assume exact arithmetic and do not take into account finite precision effects.

\paragraph{Random starting guess.} 
The entries of $\mom$ are i.i.d.\footnote{independent and identically distributed} variables 
from one of the two distributions: 
standard normal (zero mean and variance 1), or
Rademacher (values $\pm 1$ with equal probability). 

As in Section~\ref{s_analysis}, 
let $\mom_1\equiv \mU_1^*\mom$ and $\mom_2\equiv  \mU_2^*\mom$ be the respective
``projections" of the starting guess on the dominant and subdominant eigenspaces.
The success of $\mt$ in capturing the dominant subspace depends on the quantity
$\normtwo{\mom_2}\normtwo{\mom_1^\dagger}$.
We make the reasonable assumption $\rank(\mom_1)=k$, so that 
$\mom_1^{\dagger}$ is a right inverse.
Asymptotically, for both Gaussian \cite[Propositions A.2 and A.4]{HMT09}
and Rademacher random matrices \cite[Theorem 1.1]{rudelson2009smallest},
 $\|\mom_2\|_2$ grows
like $\sqrt{n-k} + \sqrt{\ell}$, and  $1/\|\mom_1^{\dagger}\|_2$  like $\sqrt{\ell} - \sqrt{k}$.

Other than that, however, there are major differences.
For Gaussian random matrices, the number columns in $\mom$ is $\ell=k+p$, where $p$
is a  user-specified oversampling parameter.
  The discussion in \cite[Section 5.3]{gu2015subspace}
indicates that the bounds in Section~\ref{s_gr} should hold with high probability for $p\lesssim 20$.
Asymptotically, the required number of columns in a Gaussian starting guess is $\ell\sim k$. 

In contrast, the number of columns in a Rademacher random matrix cannot simply be relegated,
once and for all, to a fixed oversampling parameter, but instead show a strong
dependence on the dimension  $k$ of the dominant subspace and the matrix dimension $n$. We show (Section~\ref{s_prob})
 that the error bounds in Section~\ref{s_rm} hold with high probability,
 if the number of columns in $\mom$ is  $\ell \sim (k+\log n)\log k$. This behavior is similar to that 
 of structured random matrices from sub-sampled random Fourier transforms 
 and  sub-sampled random Hadamard transforms~\cite{tropp2011improved}.  
It is not yet clear, though, whether the asymptotic factor $(k+\log n)\log k$ is tight,
 or whether it is merely an artifact of the analysis.

\subsection{Main results}\label{s_main}
We clarify our assumptions (Section~\ref{s_ass}),
before presenting the main error bounds for the trace and logdet estimators,
when the random matrices for the starting guess are Gaussian (Section~\ref{s_gr}) 
and Rademacher (Section~\ref{s_rm}).

\subsubsection{Assumptions}\label{s_ass}
Let $\ma\in\cnn$ be a Hermitian positive semi-definite matrix with eigenvalue decomposition
$$\ma =\mU\mlam\mU^*, \qquad 
\mlam=\diag\begin{pmatrix} \lambda_1 & \cdots & \lambda_n\end{pmatrix}\in\rnn,$$
where the eigenvector matrix $\mU\in\cnn$ is  unitary, and 
the eigenvalues are ordered, $\lambda_1\geq \cdots \geq \lambda_n\geq 0$.

We assume that the eigenvalues of $\ma$ have  a gap $\lambda_k>\lambda_{k+1}$ 
for some $1\leq k<n$, and
distinguish the dominant eigenvalues from the sub-dominant ones by partitioning
$$\mlam=\begin{pmatrix}\mlam_1 & \\ & \mlam_2 \end{pmatrix}, \qquad
\mU=\begin{pmatrix}\mU_1 & \mU_2\end{pmatrix},$$
where 
$\mlam_1=\diag\begin{pmatrix}\lambda_1 & \cdots & \lambda_k\end{pmatrix}\in\real^{k\times k}$
is nonsingular, and $\mU_1\in\complex^{n\times k}$.
The size of the gap is inversely proportional to
$$\gamma\equiv \lambda_{k+1}/\lambda_k =\|\mlam_2\|_2\,\|\mlam_1^{-1}\|_2<1.$$

Given a number of power iterations  $q\geq 1$, 
and a starting guess $\mom\in\complex^{n\times \ell}$ 
with $k\leq \ell\leq n$ columns, we assume that the product has full column rank,
\begin{eqnarray}\label{e_ass1}
\rank(\ma^q\,\mom)=\ell.
\end{eqnarray}

Extract an orthonormal basis for $\range(\ma^q\,\mom)$ with a thin QR decomposition
$\ma^q\,\mom=\mq\mr$, where $\mq\in\complex^{n\times \ell}$ 
with $\mq^*\mq=\mi_{\ell}$, and the matrix $\mr\in\complex^{\ell\times \ell}$ nonsingular.

To distinguish of the effect of the dominant subspace on the starting guess from that of the 
subdominant space, partition
$$\mU^*\mom=\begin{pmatrix}\mU_1^*\mom \\ \mU_2^*\mom\end{pmatrix}=
\begin{pmatrix}\mom_1 \\ \mom_2 \end{pmatrix}, $$
where $\mom_1\equiv \mU_1^*\mom\in\complex^{k\times \ell}$ and 
$\mom_2\equiv  \mU_2^*\mom\in\complex^{(n-k)\times \ell}$. 
We assume that $\mom$ has a sufficient contribution in the dominant subspace of $\ma$, 
\begin{eqnarray}\label{e_ass2}
\rank(\mom_1)=k.
\end{eqnarray}


\subsubsection{Gaussian random matrices}\label{s_gr}
We present absolute error bounds for the trace and logdet estimators when the random
starting guess $\mom$ in Algorithm~\ref{alg:randsubspace} is a Gaussian. The bounds
come in two flavors: expectation, or mean (Theorem~\ref{t_gauss_exp});
and concentration around the mean 
(Theorem~\ref{t_gauss}). We argue that for matrices with sufficiently dominant eigenvalues, the 
bounds are close.

The number of columns in $\mom$ is equal to 
$$\ell=k+p,$$
where $0\leq p<n-k$ is a user-specified oversampling parameter. 
We abbreviate
\begin{equation}\label{e_mu}
\mu \equiv \sqrt{n-k} + \sqrt{k+p}.
\end{equation}

\begin{theorem}[Expectation]\label{t_gauss_exp}
With the assumptions in Section~\ref{s_ass}, let $\mt$ be computed by 
Algorithm~\ref{alg:randsubspace} with a Gaussian starting guess $\mom^{n\times (k+p)}$ and furthermore, let $p\geq 2$. Then 
\begin{eqnarray*}
0 \leq \expect{\trace(\ma) - \trace( \mt)} \leq 
\left(1 + \gamma^{2q-1}\,C_{ge}\right) \> \trace(\mlam_2), \end{eqnarray*}
and
\begin{eqnarray*}
\lefteqn{0\leq \expect{\logdet( \mi+ \ma) -  \logdet( \mi+  \mt)}  \leq} \\  
& & \qquad\qquad\qquad 
\logdet\left(  \mi+ \mlam_2\right) +
 \logdet\left( \mi+ \gamma^{2q-1}\, C_{ge}\,\mlam_2\right),
\end{eqnarray*}â
where 
$$C_{ge} \define \frac{e^2\,(k+p)}{(p+1)^2}\>
 \left(\frac{1}{2\pi(p+1)}\right)^{\frac{2}{p+1}}\>
 \left(\mu+\sqrt{2} \right)^2 \left(\frac{p+1}{p-1}\right). $$
\end{theorem}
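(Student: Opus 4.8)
The plan is to combine the deterministic reduction of Section~\ref{s_struct} with probabilistic estimates of the norms $\normtwo{\mom_2}$ and $\normtwo{\mom_1^\dagger}$, exploiting that for a Gaussian starting guess these two blocks are independent. Specialized to the present setting, the structural bounds of Section~\ref{s_struct} state that, for \emph{any} $\mom$ satisfying the assumptions of Section~\ref{s_ass},
\begin{align*}
0 &\le \trace(\ma) - \trace(\mt) \le \bigl(1 + \gamma^{2q-1}\,\normtwo{\mom_2}^2\,\normtwo{\mom_1^\dagger}^2\bigr)\,\trace(\mlam_2),\\
0 &\le \logdet(\mi+\ma) - \logdet(\mi+\mt) \le \logdet(\mi+\mlam_2) + \logdet\bigl(\mi + \gamma^{2q-1}\,\normtwo{\mom_2}^2\,\normtwo{\mom_1^\dagger}^2\,\mlam_2\bigr),
\end{align*}
so it suffices to bound $\expect{\normtwo{\mom_2}^2\,\normtwo{\mom_1^\dagger}^2}$ and, for the logdet bound, to move this expectation inside a concave function. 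Since the columns of $\mU_1$ and $\mU_2$ are mutually orthogonal and $\mom$ has i.i.d.\ standard normal entries, the Gaussian blocks $\mom_1=\mU_1^*\mom\in\complex^{k\times(k+p)}$ and $\mom_2=\mU_2^*\mom\in\complex^{(n-k)\times(k+p)}$ are uncorrelated, hence independent, so
$$\expect{\normtwo{\mom_2}^2\,\normtwo{\mom_1^\dagger}^2} = \expect{\normtwo{\mom_2}^2}\;\expect{\normtwo{\mom_1^\dagger}^2}.$$

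For the first factor, $\mom_2$ is an $(n-k)\times(k+p)$ Gaussian matrix, so standard Gaussian concentration (cf.\ \cite{HMT09}) yields the tail bound $\mathbb{P}(\normtwo{\mom_2}>\mu+t)\le e^{-t^2/2}$ for $t\ge0$, with $\mu$ as in \eqref{e_mu}. Using the layer-cake identity $\expect{\normtwo{\mom_2}^2}=\int_0^\infty 2s\,\mathbb{P}(\normtwo{\mom_2}>s)\,ds$ and splitting the integral at $s=\mu$ gives $\expect{\normtwo{\mom_2}^2}\le \mu^2+\mu\sqrt{2\pi}+2\le(\mu+\sqrt2)^2$, the last inequality because $\sqrt{2\pi}\le2\sqrt2$. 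For the second factor, $\mom_1$ is a $k\times(k+p)$ Gaussian with $p\ge2$ and $\normtwo{\mom_1^\dagger}=1/\sigma_{\min}(\mom_1)$; appealing to the known bounds on the expected inverse second moment of the smallest singular value of a Gaussian matrix gives $\expect{\normtwo{\mom_1^\dagger}^2}\le \frac{e^2(k+p)}{(p+1)^2}\bigl(\frac{1}{2\pi(p+1)}\bigr)^{2/(p+1)}\frac{p+1}{p-1}$. Multiplying the two bounds produces exactly $\expect{\normtwo{\mom_2}^2\,\normtwo{\mom_1^\dagger}^2}\le C_{ge}$.

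Taking expectations in the structural trace bound and inserting this estimate proves the trace claim, the lower bound being immediate since the compression $\mt=\mq^*\ma\mq$ of a positive semi-definite matrix has smaller trace. For the logdet bound, set $X\equiv\gamma^{2q-1}\normtwo{\mom_2}^2\normtwo{\mom_1^\dagger}^2\ge0$ and note that $x\mapsto\logdet(\mi+x\mlam_2)$ is concave and nondecreasing on $[0,\infty)$ (its second derivative is $-\trace\bigl(((\mi+x\mlam_2)^{-1}\mlam_2)^2\bigr)\le0$); Jensen's inequality therefore gives $\expect{\logdet(\mi+X\mlam_2)}\le\logdet(\mi+\expect{X}\mlam_2)$, and monotonicity of $\logdet$ in the positive semi-definite order together with $\expect{X}\le\gamma^{2q-1}C_{ge}$ finishes the upper bound. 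The logdet lower bound follows from Cauchy interlacing: the $i$-th largest eigenvalue of the compression $\mt$ is at most $\lambda_i\ge0$, whence $\logdet(\mi+\mt)\le\logdet(\mi+\ma)$.

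I expect the main difficulty to be the second probabilistic ingredient, namely a finite and reasonably sharp bound on $\expect{1/\sigma_{\min}(\mom_1)^2}$. The density of $\sigma_{\min}$ of a Gaussian matrix with $p$ excess columns vanishes like $\sigma^{p}$ near the origin, so this negative second moment is finite precisely for $p\ge2$ --- exactly the hypothesis of the theorem --- and the factor $\frac{p+1}{p-1}$ records how the relevant integral blows up as $p\downarrow1$. A secondary point requiring care is the direction of Jensen's inequality in the logdet bound: it is essential to use the \emph{concavity} of $\logdet(\mi+\cdot)$ and replace the random factor by its mean \emph{after} the logarithm, since bounding the factor inside the logdet would yield a strictly weaker estimate.
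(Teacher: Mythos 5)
Your proposal is correct and follows essentially the same route as the paper: rotational invariance makes $\mom_1,\mom_2$ independent standard Gaussians, sub-multiplicativity reduces the structural bounds of Theorems~\ref{t_te} and~\ref{t_ld} to $\expect{\normtwo{\mom_2}^2}\expect{\normtwo{\mom_1^\dagger}^2}$, the layer-cake computations give exactly the two factors of $C_{ge}$ (the pseudo-inverse moment is derived in Appendix~\ref{app_gauss} from the tail bound of \cite[Proposition 10.4]{HMT09}, which is the ``known bound'' you invoke), and Jensen's inequality for the concave map $x\mapsto\logdet(\mi+x\mlam_2)$ handles the logdet case just as the paper does eigenvalue-by-eigenvalue with $\log(1+\alpha x)$. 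Your closing remarks correctly identify both the role of $p\ge 2$ and the direction of Jensen needed for the logdet bound.
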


\begin{proof}
See Section~\ref{ss_g_exp}. \qed
\end{proof}

Theorem~\ref{t_gauss_exp} demonstrates that Algorithm~\ref{alg:randsubspace}
with a Gaussian starting guess produces a biased estimator. However, when
$\mlam_2 = \mathbf{0}$, then Algorithm~\ref{alg:randsubspace} produces an
unbiased estimator. 

In the special case when $\rank(\ma)=k$,  the assumption (\ref{e_ass2}) guarantees exact 
computation,
$\trace(\mt)=\trace(\ma)$ and $\logdet(\mt)=\logdet(\ma)$. Hence
the bounds are zero, and hold with equality.
If $\ma$ has $n-k$ eigenvalues close to zero, i.e. $\mlam_2\approx 0$, the
upper bounds in Theorem~\ref{t_gauss_exp} are small, implying that the estimators are accurate 
in the absolute sense.
If $\ma$ has $k$ dominant eigenvalues that are very well separated from the remaining eigenvalues, 
i.e. $\gamma\ll 1$, then Theorem~\ref{t_gauss_exp} implies that the absolute error in the estimators
depends on the mass of the neglected eigenvalues $\mlam_2$. 
The above is true also for the following concentration bounds,
which  have the same form as the expectation bounds. 

\begin{theorem}[Concentration]\label{t_gauss}
With the assumptions in Section~\ref{s_ass}, let $\mt$ 
be computed by Algorithm~\ref{alg:randsubspace} with a Gaussian starting guess 
guess $\mom^{n\times (k+p)}$ where $p \geq 2$.
If $0<\delta<1$, then with probability at least $1-\delta$ 
\begin{eqnarray*} 
0 \leq \trace(\ma) - \trace( \mt) \leq \> \left(1 + \gamma^{2q-1}\,C_g\right) \>\trace(\mlam_2), 
\end{eqnarray*}
and
\begin{eqnarray*}
0 \leq \lefteqn{\logdet( \mi+ \ma) -  \logdet( \mi+  \mt)  \leq}\\
& & \qquad\qquad\qquad  
\logdet\left(  \mi+ \mlam_2\right) 
+ \logdet\left( \mi+ \gamma^{2q-1} \,C_g\,\mlam_2\right),
\end{eqnarray*}
where 
$$C_g  \equiv \frac{e^2\,(k+p)}{(p+1)^2}\> \left(\frac{2}{\delta}\right)^{\frac{2}{p+1}} 
\left(\mu  + \sqrt{2\log\frac{2}{\delta}}\right)^2.$$ 
\end{theorem}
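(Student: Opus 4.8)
The plan is to mirror the proof of Theorem~\ref{t_gauss_exp}, replacing the expectation bounds on $\normtwo{\mom_2}$ and $\normtwo{\mom_1^\dagger}$ with high-probability bounds. The whole argument factors through the structural bounds of Section~\ref{s_struct}, which (for any starting guess $\mom$ satisfying the assumptions of Section~\ref{s_ass}) control $\trace(\ma)-\trace(\mt)$ and $\logdet(\mi+\ma)-\logdet(\mi+\mt)$ in terms of $\gamma^{2q-1}$ and the single random quantity $\|\mom_2\|_2\,\|\mom_1^\dagger\|_2$. So the only thing that changes between the two Gaussian theorems is the tail we put on that product. First I would recall the standard tail estimates for Gaussian random matrices (as cited in Section~\ref{s_gr}): for a $p\ge 2$ oversampling, with probability at least $1-\delta/2$ one has $\|\mom_1^\dagger\|_2 \le \frac{e\sqrt{k+p}}{p+1}(2/\delta)^{1/(p+1)}$, and with probability at least $1-\delta/2$ one has $\|\mom_2\|_2 \le \mu + \sqrt{2\log(2/\delta)}$, where $\mu$ is as in \eqref{e_mu}.

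Next I would combine these two events by a union bound, so that with probability at least $1-\delta$ both hold simultaneously, and multiply them to get
\begin{eqnarray*}
\|\mom_2\|_2^2\,\|\mom_1^\dagger\|_2^2 \;\le\; \frac{e^2(k+p)}{(p+1)^2}\left(\frac{2}{\delta}\right)^{2/(p+1)}\left(\mu+\sqrt{2\log\tfrac{2}{\delta}}\right)^2 \;=\; C_g.
\end{eqnarray*}
Then I would substitute $\|\mom_2\|_2^2\|\mom_1^\dagger\|_2^2 \le C_g$ into the structural bounds from Section~\ref{s_struct}. For the trace this yields $0 \le \trace(\ma)-\trace(\mt) \le (1+\gamma^{2q-1}C_g)\trace(\mlam_2)$; for the log-determinant it yields the stated bound involving $\logdet(\mi+\mlam_2)+\logdet(\mi+\gamma^{2q-1}C_g\,\mlam_2)$, using the monotonicity of $\logdet(\mi+\cdot)$ on Hermitian positive semi-definite arguments. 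The lower bound $0\le\,\cdot$ is deterministic and comes directly from the structural analysis (it reflects the fact that $\mt=\mq^*\ma\mq$ is a compression, so its trace and log-determinant can only underestimate those of $\ma$).

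The main obstacle — really the only delicate point — is getting the constant $C_g$ to come out exactly as stated, i.e.\ tracking how the $(2/\delta)^{1/(p+1)}$ factor from the pseudo-inverse tail and the $\sqrt{2\log(2/\delta)}$ shift from the $\|\mom_2\|_2$ tail propagate through the structural inequality without accumulating extra slack; this requires squaring cleanly and being careful that the union bound splits the failure probability as $\delta/2+\delta/2$ rather than something that degrades the exponent. Everything else is a direct substitution into Section~\ref{s_struct}, entirely parallel to the expectation proof in Section~\ref{ss_g_exp}, with Markov's inequality there replaced by the explicit Gaussian tail bounds here.
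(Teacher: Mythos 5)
Your proposal is correct and follows essentially the same route as the paper: the paper's proof is a one-line substitution of a tail bound for $\normtwo{\mom_2}^2\normtwo{\mom_1^\dagger}^2$ (Lemma~\ref{p_gauss_dev}, cited from \cite[Theorem 5.8]{gu2015subspace}) into the structural bounds of Theorems~\ref{t_te} and~\ref{t_ld}, together with $\normtwo{\mom_2\mom_1^\dagger}\le\normtwo{\mom_2}\normtwo{\mom_1^\dagger}$ and the rotational invariance making $\mom_1,\mom_2$ independent standard Gaussians. The only difference is that you derive that tail bound yourself from the two individual Gaussian tails via a $\delta/2+\delta/2$ union bound, which reproduces $C_g$ exactly and, as a bonus, needs only $p\ge 2$, whereas the lemma the paper invokes is stated for $p\ge 4$.
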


\begin{proof}
Substitute Lemma~\ref{p_gauss_dev} into Theorems~\ref{t_te} and~\ref{t_ld}.~\qed
\end{proof}
\bigskip

The expectation and concentration bounds in Theorems \ref{t_gauss_exp} and~\ref{t_gauss} are the same save for the constants 
$C_{ge}$ and $C_g$. For matrices $\ma$ with sufficiently well separated eigenvalues,
i.e. $\gamma\ll 1$,
 and sufficiently many power iterations $q$ in Algorithm~\ref{alg:randsubspace}, 
 the factor $\gamma^{2q-1}$ subdues the effect of $C_{ge}$ and $C_g$, so that 
Theorems \ref{t_gauss_exp} and~\ref{t_gauss} are effectively the same. 

Nevertheless, we can still compare Theorems 
\ref{t_gauss_exp} and~\ref{t_gauss} by comparing their constants. 
To this end we take advantage of the natural logarithm, and consider two cases.
For a high failure probability $\delta = 2/e$, the ratio is
$$\frac{C_g}{C_{ge}} = \left(2\,e\,\pi\,(p+1)\right)^{\frac{2}{p+1}}\>\left(\frac{p-1}{p+1}\right)
 \rightarrow 1 \qquad \text{as} \quad p\rightarrow \infty.$$ 
Hence  the concentration bound approaches the
expectation bound as the oversampling increases. Note, though, that the rank assumptions
for the bounds impose the limit $p<n-k$. However, for the practical value $p=20$, the ratio
$C_g/C_{ge}\approx 1.6$, so that the constants differ by a factor less than 2.

For a lower failure probability $\delta < 2/e$,  we have $C_g > C_{ge}$. Hence the
concentration bound in Theorem~\ref{t_gauss} has a higher constant.

\subsubsection{Rademacher random matrices}\label{s_rm}
We present absolute error bounds for the trace and logdet estimators when the random
starting guess $\mom$ in Algorithm~\ref{alg:randsubspace} is a Rademacher random matrix. 
In contrast to Gaussian starting guesses, the number of columns in the Rademacher guess reflects
the dimension of the dominant subspace.

The error bounds contain a parameter $0<\rho<1$ that controls the magnitude of 
$\normtwo{\mom_1^\dagger}$. 
The bound below has the same form as the error bound in Theorem~\ref{t_gauss}
with Gaussian starting guesses; the only difference being the constant.

\begin{theorem}\label{t_rademacher}
With the assumptions in Section~\ref{s_ass},  let $0<\delta<1$ be a given failure probability,
and let $\mt$ be computed by Algorithm~\ref{alg:randsubspace} with a Rademacher 
starting guess $\mom\in\real^{n\times \ell}$.
If the number of columns in $\mom$ satisfies 
\[  \ell \geq 2\rho^{-2}\left(\sqrt{k} + \sqrt{8\log\frac{4n}{\delta}}\right)^2 \log\left(\frac{4k}{\delta}\right), \]
then with probability at least $1-\delta$ 
\begin{eqnarray*} 
0 \leq \trace(\ma) - \trace(\mt) \leq \> \left(1 + \gamma^{2q-1}\, C_r\right) \>\trace(\mlam_2), 
\end{eqnarray*}
and
\begin{eqnarray*}
\lefteqn{0\leq \logdet(\mi+\ma) - \logdet( \mi+\mt) \leq}\\
& & \qquad\qquad\qquad  
\logdet\left(\mi+ \mlam_2\right) + 
\logdet\left( \mi+ {\gamma^{2q-1}}\, C_r\,\mlam_2\right),
\end{eqnarray*}
where
\[ C_r \equiv \> \frac{1}{(1-\rho)}\left[1 + 3\ell^{-1}\left(\sqrt{n-k} + \sqrt{8\log\frac{4\ell}{\delta}}\right)^2 \log\frac{4(n-k)}{\delta}\right].\]
\end{theorem}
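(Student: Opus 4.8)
My plan is to follow the same two-stage template as the Gaussian case: a deterministic reduction, then a probabilistic estimate of the single scalar that drives everything. First I would invoke the structural estimates Theorems~\ref{t_te} and~\ref{t_ld} — the very ones substituted into to prove Theorem~\ref{t_gauss} — which for \emph{any} starting guess obeying (\ref{e_ass1})--(\ref{e_ass2}) bound both errors in terms of $\gamma^{2q-1}\normtwo{\mom_2}^2\normtwo{\mom_1^\dagger}^2$, namely
\[
0 \le \trace(\ma)-\trace(\mt) \le \bigl(1+\gamma^{2q-1}\normtwo{\mom_2}^2\normtwo{\mom_1^\dagger}^2\bigr)\trace(\mlam_2)
\]
and the matching two-term bound for $\logdet(\mi+\ma)-\logdet(\mi+\mt)$ (any sharper form involving $\normtwo{\mom_2\mom_1^\dagger}^2$ is dominated by the product). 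Thus the theorem reduces to showing that, for a Rademacher $\mom$ with $\ell$ at least the stated lower bound, $\normtwo{\mom_2}^2\normtwo{\mom_1^\dagger}^2 \le C_r$ with probability at least $1-\delta$; this also recovers $\rank(\mom_1)=k$ for free, so (\ref{e_ass2}) need not be assumed separately here.

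Next I would set up the probabilistic estimate. Writing $\mom=[\vec{z}_1\ \cdots\ \vec{z}_\ell]$ with i.i.d.\ Rademacher columns,
\[
\mom_1\mom_1^* = \sum_{j=1}^\ell (\mU_1^*\vec{z}_j)(\mU_1^*\vec{z}_j)^*, \qquad \mom_2\mom_2^* = \sum_{j=1}^\ell (\mU_2^*\vec{z}_j)(\mU_2^*\vec{z}_j)^*
\]
are sums of $\ell$ independent Hermitian positive semidefinite rank-one matrices with expectations $\ell\,\mi_k$ and $\ell\,\mi_{n-k}$ (since $\expect{\vec{z}_j\vec{z}_j^*}=\mi_n$ and the $\mU_i$ have orthonormal columns). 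Since $\normtwo{\mom_1^\dagger}^2 = 1/\lambda_{\min}(\mom_1\mom_1^*)$ and $\normtwo{\mom_2}^2 = \lambda_{\max}(\mom_2\mom_2^*)$, both are exactly what matrix Chernoff inequalities control — provided one has a bound on the spectral norm of each rank-one summand, i.e.\ on $\normtwo{\mU_i^*\vec{z}_j}^2 = \vec{z}_j^*(\mU_i\mU_i^*)\vec{z}_j$.

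The key new ingredient over the Gaussian case, and the step I expect to be the main obstacle, is precisely that $\mU_1^*\mom$ and $\mU_2^*\mom$ are no longer Rademacher matrices and are not independent of one another, so rotation invariance is unavailable and these per-column quantities must be controlled head-on. Each $\vec{z}_j^*(\mU_i\mU_i^*)\vec{z}_j$ is a quadratic form of a Rademacher vector against an orthogonal projector of rank $k$ (resp.\ $n-k$) with mean $k$ (resp.\ $n-k$); a Bernstein/Hanson--Wright-type inequality for Rademacher quadratic forms yields
\[
\vec{z}_j^*(\mU_1\mU_1^*)\vec{z}_j \le \bigl(\sqrt{k}+\sqrt{8\log(4n/\delta)}\bigr)^2, \qquad \vec{z}_j^*(\mU_2\mU_2^*)\vec{z}_j \le \bigl(\sqrt{n-k}+\sqrt{8\log(4\ell/\delta)}\bigr)^2,
\]
the first outside an event of probability $\le\delta/(4n)$ and the second outside one of probability $\le\delta/(4\ell)$; a union bound over the $\ell\le n$ columns makes each hold for every $j$ with probability $\ge 1-\delta/4$ (using the slightly larger $\log(4n/\delta)$ in the first bound, valid since $\ell\le n$, is what keeps the lower bound on $\ell$ free of $\ell$ on its right-hand side).

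Finally I would run the two matrix Chernoff bounds on these events. For $\mom_1\mom_1^*$ the lower-tail bound with $\mu_{\min}=\ell$ and summand bound $R_1=(\sqrt{k}+\sqrt{8\log(4n/\delta)})^2$, together with $e^{-\rho}(1-\rho)^{-(1-\rho)}\le e^{-\rho^2/2}$, has failure probability $\le k\,e^{-\rho^2\ell/(2R_1)}$, which the hypothesis $\ell \ge 2\rho^{-2}R_1\log(4k/\delta)$ drives below $\delta/4$; hence $\lambda_{\min}(\mom_1\mom_1^*)\ge(1-\rho)\ell$, i.e.\ $\normtwo{\mom_1^\dagger}^2 \le 1/((1-\rho)\ell)$. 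For $\mom_2\mom_2^*$ the upper-tail bound with $\mu_{\max}=\ell$, summand bound $R_2=(\sqrt{n-k}+\sqrt{8\log(4\ell/\delta)})^2$, and deviation parameter tuned so that the $(n-k)$-dimensional failure probability is $\le\delta/4$, gives $\normtwo{\mom_2}^2 \le \ell + 3R_2\log\frac{4(n-k)}{\delta}$. Intersecting the four events (total failure $\le\delta$) and multiplying,
\[
\normtwo{\mom_2}^2\normtwo{\mom_1^\dagger}^2 \le \frac{\ell+3R_2\log\frac{4(n-k)}{\delta}}{(1-\rho)\ell} = C_r,
\]
and substituting into Theorems~\ref{t_te} and~\ref{t_ld} closes the argument. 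Apart from the conceptual obstacle above, the remaining work is bookkeeping: choosing $\rho$, the per-column tail level, and the Chernoff deviation so that the constants ($8$ in the column bounds, $3$ in the $\mom_2$ bound) and the failure-probability budget come out exactly as stated.
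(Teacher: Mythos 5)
Your proposal is correct and follows essentially the same route as the paper: reduce to bounding $\normtwo{\mom_2}^2\normtwo{\mom_1^\dagger}^2$ via Theorems~\ref{t_te} and~\ref{t_ld}, then control $\lambda_{\min}(\mom_1\mom_1^*)$ and $\lambda_{\max}(\mom_2\mom_2^*)$ by matrix Chernoff, with the per-column norms $\normtwo{\mU_i^*\vomega_j}$ handled by a concentration inequality plus a union bound over columns, and the failure budget split exactly as you describe. The only cosmetic difference is that the paper obtains the per-column tails $(\sqrt{r}+\sqrt{8\log(1/\delta)})^2$ from Ledoux's concentration inequality for convex Lipschitz functions of Rademacher vectors rather than a Hanson--Wright-type bound, but the resulting constants and the conditioning argument are identical.
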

\begin{proof}
Substitute the bound for $\normtwo{\mom_2}^2\normtwo{\mom_1^\dagger}^2$ from  Theorem~\ref{p_gauss_dev} into Theorems~\ref{t_te} and~\ref{t_ld}.~\qed
\end{proof}

The interpretation of Theorem is the same as that of 
Theorems \ref{t_gauss_exp} and~\ref{t_gauss}. In contrast to Gaussian starting guesses,
whose number of columns depends on  a fixed oversampling parameter $p$, the columns
of the Rademacher guess increase with the dimension of the dominant subspace. 

Theorem~\ref{t_gauss_exp} shows that when Algorithm~\ref{alg:randsubspace} is
run with a Gaussian starting guess, the resulting estimators for the trace and
determinant are biased. We are not able to provide a similar result for the
expectation of the estimators for the Rademacher starting guess. However, we
conjecture that the estimators for trace and determinant are biased even when
the Rademacher starting guess is used in Algorithm~\ref{alg:randsubspace}.

\subsection{Comparison with Monte Carlo estimators}\label{s_mc}

The reliability of Monte Carlo estimators is judged by the variance of a
single  sample. This variance is  $2(\normf{\ma}^2 - \sum_{j=1}^n\ma_{jj}^2)$
for the Hutchinson estimator, and $2\normf{\ma}^2$ for the Gaussian estimator.
Avron and Toledo~\cite{avron2011randomized} were the first to determine
the number of Monte Carlo samples $N$ required to achieve a
\textit{relative} error $\epsilon$ with probability $1-\delta$, and
defined an $(\epsilon,\delta)$ estimator
$$\prob{\left|\trace(\ma)-\frac{1}{N}\sum_{j=1}^N \vz_j^*\ma\vz_j\right| 
\leq \epsilon \>\trace(\ma)} \geq 1 - \delta. $$
An $(\epsilon,\delta)$ estimator based on Gaussian vectors $\vz_j$ requires
$N \geq 20\,\epsilon^{-2}\, \log(2/\delta)$ samples. In contrast, the Hutchinson
estimator, which is based on Rademacher vectors, 
requires $N \geq 6\,\epsilon^{-2} \,\log(2\rank(\ma)/\delta)$ samples.

Roosta-Khorasani and Ascher~\cite{roosta2015improved} improve the above bounds for Gaussian estimators to $N \geq 8\, \epsilon^{-2}\, \log(2/\delta)$; and for the Hutchinson estimator to $N \geq 6\,\epsilon^{-2}\, \log(2/\delta)$, 
thus removing the dependence on the rank. They also derived 
bounds on the number of samples required for an $(\epsilon,\delta)$ estimator, using the Hutchinson, Gaussian and the unit vector random samples, which  depend on specific properties of $\ma$. 
All bounds retain the $\epsilon^{-2}$ factor, though, which means that an accurate
trace estimate requires many samples in practice. In fact, even for small matrices, while a few samples can estimate the trace up to one digit of accuracy, many samples are needed in practice to estimate the trace to machine precision. 

To facilitate comparison between our estimators and the Monte Carlo estimators, we derive the number of iterations needed for an $(\epsilon,\delta)$ estimator. Define the relative error 
\begin{equation}\label{e_Delta} \Delta \equiv \trace(\mlam_2)/\trace(\mlam). \end{equation}  
In practice, the relative error $\Delta$ is not known. Instead, it can be estimated as follows: the bounds  $\trace(\mlam_2) \leq (n-k)\lambda_{k+1}$, $\trace(\mlam_1) \geq k\lambda_k$, can be combined to give us the upper bound 
\[ \Delta \leq \frac{(n-k)\gamma}{n\gamma + k(1-\gamma)} \>. \]
Assuming that $\Delta > 0$, abbreviate $\epsilon_\Delta \equiv \epsilon/\Delta$. If $\Delta = 0$, then we have achieved our desired relative error, i.e., the relative error is less than $\epsilon$. 
 
We present the following theorem that gives the asymptotic bound on the number of matrix-vector products needed for an $(\epsilon,\delta)$ trace estimator. 
\begin{theorem}[Asymptotic bounds]\label{t_asymp}
With the assumptions in Section~\ref{s_ass}, let $\epsilon$ be the desired accuracy and let $0 < \Delta < \epsilon \leq 1$. The number of matrix-vector products for an $(\epsilon,\delta)$ estimator is asymptotically
\begin{equation}
k\left(\log \frac{1}{\epsilon_\Delta - 1} + \log \frac{2}{\delta}\right),
\end{equation}
for Gaussian starting guess, whereas for Rademacher starting guess the number of matrix-vector products is asymptotically
\begin{equation}
(k+\log n)\log k \left(\log \frac{1}{\epsilon_\Delta - 1} + \log \left[(n-k)\log \frac{4n}{\delta}\right]\right).
\end{equation}
\end{theorem}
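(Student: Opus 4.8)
The plan is to read off, from the concentration bounds already in hand (Theorem~\ref{t_gauss} for the Gaussian guess, Theorem~\ref{t_rademacher} for the Rademacher guess), how large the number $q$ of power iterations must be to force the \emph{relative} error of $\trace(\mt)$ below $\epsilon$ with probability at least $1-\delta$, and then to multiply that $q$ by the cost per iteration. First I would count matrix--vector products: line~1 of Algorithm~\ref{alg:randsubspace} forms $\ma^q\mom$ at a cost of $q\ell$ products, and line~3 forms $\ma\mq$ at a cost of $\ell$ more, so the algorithm uses $(q+1)\ell$ matrix--vector products, which I will treat as $\Theta(q\ell)$.

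Next I would convert the absolute bounds into relative ones. Since $\trace(\ma)=\trace(\mlam)$ and $\Delta=\trace(\mlam_2)/\trace(\mlam)$, dividing the upper bound in Theorem~\ref{t_gauss} (resp.\ Theorem~\ref{t_rademacher}) by $\trace(\mlam)$ shows that, with probability at least $1-\delta$, the relative error of the trace estimator is at most $(1+\gamma^{2q-1}C)\,\Delta$, where $C$ is $C_g$ or $C_r$. On the event in question this makes $\trace(\mt)$ an $(\epsilon,\delta)$ estimator precisely when $(1+\gamma^{2q-1}C)\Delta\le\epsilon$, i.e.\ when $\gamma^{2q-1}C\le\epsilon_\Delta-1$ (using $\epsilon_\Delta=\epsilon/\Delta>1$, which holds because $\Delta<\epsilon$); taking logarithms and using $\log(1/\gamma)>0$, this is $2q-1\ge\bigl(\log C+\log\frac{1}{\epsilon_\Delta-1}\bigr)/\log(1/\gamma)$. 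Treating the gap ratio $\gamma$ as a fixed constant, the smallest admissible number of iterations is thus $q^\ast=\Theta\!\bigl(\log C+\log\frac{1}{\epsilon_\Delta-1}\bigr)$, and the total cost is $\Theta(q^\ast\ell)$.

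It then remains to instantiate $\ell$ and $\log C$ in each case. For the Gaussian guess $\ell=k+p$ with $p$ a fixed oversampling parameter, so $\ell=\Theta(k)$; bounding $\mu=\sqrt{n-k}+\sqrt{k+p}$ and inspecting the formula for $C_g$ shows that, with $n,k,p$ frozen, the only genuinely growing factors are $(2/\delta)^{2/(p+1)}$ and $(\mu+\sqrt{2\log(2/\delta)})^2$, so $\log C_g=\Theta\bigl(\log\frac{2}{\delta}\bigr)$; multiplying by $q^\ast\ell$ gives $k\bigl(\log\frac{1}{\epsilon_\Delta-1}+\log\frac{2}{\delta}\bigr)$. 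For the Rademacher guess I would first simplify the column requirement of Theorem~\ref{t_rademacher}: fixing $\rho$ (say $\rho=\tfrac12$), the bound $\ell\ge 2\rho^{-2}(\sqrt k+\sqrt{8\log\frac{4n}{\delta}})^2\log\frac{4k}{\delta}$ is $\Theta\bigl((k+\log\tfrac{n}{\delta})\log\tfrac{k}{\delta}\bigr)=\Theta\bigl((k+\log n)\log k\bigr)$ in the stated regime; then, since the dominant term of $C_r$ is $3\ell^{-1}(\sqrt{n-k}+\sqrt{8\log\frac{4\ell}{\delta}})^2\log\frac{4(n-k)}{\delta}$, one gets $\log C_r=\Theta\bigl(\log[(n-k)\log\frac{4n}{\delta}]\bigr)$; multiplying by $q^\ast\ell$ yields the second displayed expression.

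The hard part will be making the word ``asymptotically'' precise: one has to commit to a scaling regime in which all the nuisance constants buried in $C_g$, $C_r$, and the Rademacher column bound actually collapse to the advertised logarithms --- here $\gamma$, $\rho$, $p$ and the low-order polynomial factors are to be frozen, with $\epsilon_\Delta\downarrow1$ and $\delta\downarrow0$ the driving parameters and $n,k$ carried along only through the terms the statement exposes. The Rademacher case is the delicate one, because both the column count $\ell$ and $\log C_r$ must be extracted from bounds that are far from closed form; the Gaussian case is essentially bookkeeping once $q^\ast$ is in hand.
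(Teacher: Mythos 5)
Your proposal is correct and follows essentially the same route as the paper: count the $(q+1)\ell$ matrix--vector products, set the concentration bound $(1+\gamma^{2q-1}C)\Delta=\epsilon$, solve for $q$ (your $\log C+\log\frac{1}{\epsilon_\Delta-1}$ is exactly the paper's $\log\frac{C\Delta}{\epsilon-\Delta}$), and substitute $\ell\sim k$ with $\log C_g\sim\log\frac{2}{\delta}$ in the Gaussian case and $\ell\sim(k+\log n)\log k$ with $\log C_r\sim\log[(n-k)\log\frac{4n}{\delta}]$ in the Rademacher case. If anything, you are more explicit than the paper about the scaling regime in which the nuisance constants ($\gamma$, $\rho$, $p$) are frozen.
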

\begin{proof}
The number of matrix-vector products in Algorithm~\ref{alg:randsubspace} are $\ell (q + 1)$. Recall that the number of samples required for Gaussian starting guess are $\ell \sim k$; whereas for Rademacher starting guess $\ell \sim 
(k+\log n)\log k$. With probability of failure at most $\delta$, for an $(\epsilon,\delta)$ estimator

\[ \frac{\trace(\ma) - \trace(\mt)}{\trace(\ma)} \leq \>  (1+\gamma^{2q-1}C)\Delta. \]
Here $C$ can either take values $C_g$ for standard Gaussian matrices and $C_r$ for standard Rademacher matrices. Equating the right hand side to $\epsilon$ gives us  $(1+\gamma^{2q-1}C) \Delta = \epsilon$. Assuming $\epsilon > \Delta$, we can solve for $q$ to obtain
\[ q =  \left\lceil \frac{1}{2}\left(1 +  \log\left(\frac{C\Delta}{\epsilon - \Delta}\right) \Bigg/ \log \gamma^{-1} \right) \right\rceil. \]
 Asymptotically, $\log C_g$ behaves like $\log 2/\delta$ and $\log C_r$ behaves like $\log 
\left[(n-k)\log {4n}/{\delta}\right]$. This proves the desired result.~\qed
\end{proof}

Theorem~\ref{t_asymp} demonstrates both estimators are computationally efficient compared to the Monte Carlo estimators if $\Delta$ is sufficiently small.

\section{Structural analysis}\label{s_struct}
We defer the probabilistic part of the analysis as long as possible, and start with 
deterministic error bounds for $\trace(\mt)$  (Section~\ref{s_tracebounds}) 
and $\logdet(\mt)$ (Section~\ref{s_logdetbounds}),
where $\mt$ is the restriction of $\ma$ computed by Algorithm~\ref{alg:randsubspace}. 
These deterministic bounds are called ``structural'' because they
hold for \textit{all} matrices $\mom$
that satisfy the rank conditions (\ref{e_ass1}) and (\ref{e_ass2}).

\subsection{Trace Estimator}\label{s_tracebounds}
We derive the following absolute error bounds
for Hermitian positive semi-definite matrices~$\ma$ and matrices $\mt$
computed by Algorithm~\ref{alg:randsubspace}.

\begin{theorem}\label{t_trace}
With the assumptions in Section~\ref{s_ass}, let $\mt=\mq^*\ma\mq$ be computed by 
Algorithm~\ref{alg:randsubspace}. Then
\begin{eqnarray*}
0\leq \trace(\ma)-\trace(\mt)\leq \left(1  + \theta_1\right)\>\trace(\mlam_2)
\end{eqnarray*}
where
$\ \theta_1\equiv \min\{\gamma^{q-1}\,\normtwo{\mom_2\mom_1^{\dagger}},\>
\gamma^{2q-1}\,\|\mom_2\mom_1^{\dagger}\|_2^2\}$.
\end{theorem}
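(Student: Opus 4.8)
The plan is to turn the trace error into the squared norm of a projection residual, use the two rank hypotheses to exhibit a $k$-dimensional subspace of $\ispan(\mq)$ that is provably close to the dominant eigenspace $\range(\mU_1)$, and then do the spectral bookkeeping. Write $\mp\equiv\mq\mq^*$ for the orthogonal projector onto $\ispan(\mq)$. Since $\mp$ is idempotent and the trace is cyclic, $\trace(\mt)=\trace(\mq^*\ma\mq)=\trace(\ma\mp)$, so that
\[
\trace(\ma)-\trace(\mt)=\trace\big((\mi-\mp)\ma(\mi-\mp)\big)=\normf{(\mi-\mp)\ma^{1/2}}^2\ \ge\ 0,
\]
which is the left inequality. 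For the right one, set $\mat{B}\equiv\mom_2\mom_1^{\dagger}$ and $\mat{F}\equiv\mlam_2^{q}\mat{B}\,\mlam_1^{-q}\in\complex^{(n-k)\times k}$. By~(\ref{e_ass2}) we have $\mom_1\mom_1^{\dagger}=\mi_k$; since $\ma^{q}\mom=\mU_1\mlam_1^{q}\mom_1+\mU_2\mlam_2^{q}\mom_2$ and $\mp$ fixes $\range(\ma^{q}\mom)$ by~(\ref{e_ass1}), right-multiplying this identity by $\mom_1^{\dagger}\mlam_1^{-q}$ gives $\mp(\mU_1+\mU_2\mat{F})=\mU_1+\mU_2\mat{F}$, so the $k$-dimensional space $\range(\mU_1+\mU_2\mat{F})$ lies in $\ispan(\mq)$. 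A singular-value inequality also gives $\sigma_i(\mat{F})\le\gamma^{q}\sigma_i(\mom_2\mom_1^{\dagger})$, in particular $\normtwo{\mat{F}}\le\gamma^{q}\normtwo{\mom_2\mom_1^{\dagger}}$.

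Next I would let $\mat{G}\equiv(\mU_1+\mU_2\mat{F})(\mi_k+\mat{F}^*\mat{F})^{-1/2}$; since $\mU_1,\mU_2$ have orthonormal columns and orthogonal ranges, $\mat{G}$ has orthonormal columns spanning $\range(\mU_1+\mU_2\mat{F})\subseteq\ispan(\mq)$. Cauchy interlacing then gives $\lambda_j(\mt)\ge\lambda_j(\mat{G}^*\ma\mat{G})$ for $1\le j\le k$, and since $\mt$ is positive semidefinite of order $\ell\ge k$, summing over $j$ yields $\trace(\mt)\ge\trace(\mat{G}^*\ma\mat{G})$. Using $\ma\mU_i=\mU_i\mlam_i$ and $\mU_1^*\mU_2=\mat{0}$, a short computation gives $\mat{G}^*\ma\mat{G}=(\mi_k+\mat{F}^*\mat{F})^{-1/2}(\mlam_1+\mat{F}^*\mlam_2\mat{F})(\mi_k+\mat{F}^*\mat{F})^{-1/2}$, and the push-through identity $\mi-\mat{F}(\mi+\mat{F}^*\mat{F})^{-1}\mat{F}^*=(\mi+\mat{F}\mat{F}^*)^{-1}$ reduces the resulting trace to
\[
\trace(\ma)-\trace(\mt)\ \le\ \trace\!\big(\mlam_2(\mi+\mat{F}\mat{F}^*)^{-1}\big)+\trace\!\big(\mlam_1\,\mat{F}^*\mat{F}(\mi+\mat{F}^*\mat{F})^{-1}\big)\ \le\ \trace(\mlam_2)+\trace\!\big(\mlam_1\,\mat{F}^*\mat{F}(\mi+\mat{F}^*\mat{F})^{-1}\big).
\]
The first summand supplies the ``$1$'' in the bound. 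For the second, call it $E$, the operator inequality $\mat{F}^*\mat{F}(\mi+\mat{F}^*\mat{F})^{-1}\preceq\mat{F}^*\mat{F}$ gives $E\le\trace(\mlam_1\mat{F}^*\mat{F})=\trace\!\big(\mlam_2^{2q}\mat{B}\,\mlam_1^{1-2q}\mat{B}^*\big)$ by cyclicity; then $q\ge 1$ makes $\normtwo{\mlam_1^{1-2q}}=\lambda_k^{1-2q}$, and $\trace(\mlam_2^{2q})=\sum_{j>k}\lambda_j^{2q}\le\lambda_{k+1}^{2q-1}\trace(\mlam_2)$, so $E\le\gamma^{2q-1}\normtwo{\mom_2\mom_1^{\dagger}}^2\,\trace(\mlam_2)$. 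This is the $\gamma^{2q-1}\normtwo{\cdot}^2$ branch of $\theta_1$.

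For the $\gamma^{q-1}\normtwo{\cdot}$ branch I would instead use the sharper scalar bound $t/(1+t)\le\sqrt{t}$ for $t\ge 0$, i.e.\ $\mat{F}^*\mat{F}(\mi+\mat{F}^*\mat{F})^{-1}\preceq(\mat{F}^*\mat{F})^{1/2}$, so that $E\le\trace\!\big(\mlam_1(\mat{F}^*\mat{F})^{1/2}\big)$, and then exploit the factorization $\mat{F}^*\mat{F}=\mlam_1^{-q}(\mat{B}^*\mlam_2^{2q}\mat{B})\mlam_1^{-q}$ together with a von~Neumann-type trace inequality and the per-index bounds $\sqrt{\sigma_i\big(\mlam_1^{-q}\mat{B}^*\mlam_2^{2q}\mat{B}\,\mlam_1^{-q}\big)}$ to arrive at $E\le\gamma^{q-1}\normtwo{\mom_2\mom_1^{\dagger}}\,\trace(\mlam_2)$; taking the smaller of the two estimates for $E$ produces $\theta_1$. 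I expect the main obstacle to be precisely this last step: the factor $\mlam_1$ in $E$ carries the \emph{large} eigenvalues, and the only mechanism pulling it down to the scale of $\trace(\mlam_2)$ is the $\mlam_1^{-q}$ buried inside $\mat{F}$; keeping the accounting tight enough that no spurious $\lambda_1/\lambda_k$ (condition number of the dominant block) creeps in, while still getting a bound \emph{linear} rather than quadratic in $\normtwo{\mom_2\mom_1^{\dagger}}$, is the delicate point and is what forces the per-index singular-value estimate rather than a crude norm bound.
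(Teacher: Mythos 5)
Your overall architecture coincides with the paper's: the test subspace $\range(\mU_1+\mU_2\mat{F})$ with $\mat{F}=\mlam_2^q\mom_2\mom_1^\dagger\mlam_1^{-q}$ is exactly the paper's $\range(\mq\mw)$, your reduction to
\[
\trace(\ma)-\trace(\mt)\;\le\;\trace\bigl(\mlam_2(\mi+\mat{F}\mat{F}^*)^{-1}\bigr)+\trace\bigl(\mlam_1\mat{F}^*\mat{F}(\mi+\mat{F}^*\mat{F})^{-1}\bigr)
\]
is the paper's $\mathcal{U}_2+\mathcal{U}_1$, and your treatment of the quadratic branch correctly reproduces (\ref{e_te2}). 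Your lower bound via $\normf{(\mi-\mq\mq^*)\ma^{1/2}}^2\ge 0$ is a clean alternative to the paper's Cauchy-interlacing argument. The genuine gap is the linear branch. After replacing $\mat{F}^*\mat{F}(\mi+\mat{F}^*\mat{F})^{-1}$ by $(\mat{F}^*\mat{F})^{1/2}$ you must show $\trace\bigl(\mlam_1(\mat{F}^*\mat{F})^{1/2}\bigr)\le\gamma^{q-1}\normtwo{\mom_2\mom_1^\dagger}\trace(\mlam_2)$, and the tools you name do not close this step: von Neumann pairs $\sigma_i(\mlam_1)=\lambda_i$ with $\sigma_i(\mat{F})$, and every per-index bound of the form $\sigma_i(XY)\le\sigma_1(X)\sigma_i(Y)$ applied to $\sigma_i(\mlam_2^q\mat{B}\mlam_1^{-q})$ detaches $\mlam_1^{-q}$ from the outer $\mlam_1$, leaving an uncancelled factor of order $\lambda_1\lambda_k^{-q}$ in the $i=1$ term --- precisely the condition number of the dominant block that you flag as the delicate point. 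The square root is the culprit: it scrambles the $\mlam_1^{-q}$ inside $\mat{F}^*\mat{F}$ so that it can no longer be merged with the external $\mlam_1$ into the single factor $\mlam_1^{1-q}$, whose spectral norm is $\lambda_k^{1-q}$ (not $\lambda_1\cdot\lambda_k^{-q}$) because $q\ge 1$.

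The paper avoids the square root altogether: it writes $\mathcal{U}_1=\trace\bigl((\mlam_1^{1-q}\mat{B}^*\mlam_2^{q})\cdot\mat{F}(\mi+\mat{F}^*\mat{F})^{-1}\bigr)$, applies von Neumann to this product keeping $\mat{F}(\mi+\mat{F}^*\mat{F})^{-1}$ intact as a contraction of norm at most $1$, and bounds the nuclear norm $\sum_j\sigma_j(\mlam_2^q\mat{B}\mlam_1^{1-q})\le\normtwo{\mat{B}}\,\lambda_k^{1-q}\,\lambda_{k+1}^{q-1}\,\trace(\mlam_2)=\gamma^{q-1}\normtwo{\mat{B}}\trace(\mlam_2)$; the two branches of $\theta_1$ then come from the two bounds $\normtwo{\mat{F}(\mi+\mat{F}^*\mat{F})^{-1}}\le 1$ and $\le\normtwo{\mat{F}}\le\gamma^q\normtwo{\mat{B}}$. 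If you insist on your square-root route, the inequality you need is in fact true, but it requires restoring the product structure, e.g.\ via the polar decomposition $\mat{F}=\mat{W}(\mat{F}^*\mat{F})^{1/2}$, which gives $\trace\bigl(\mlam_1(\mat{F}^*\mat{F})^{1/2}\bigr)=\trace\bigl(\mlam_1^{1-q}\mat{W}^*\mlam_2^q\mat{B}\bigr)$ and then succumbs to the same von Neumann argument --- at which point you have reproduced the paper's proof with $\mat{W}$ in place of the contraction $\mat{F}(\mi+\mat{F}^*\mat{F})^{-1}$.
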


\begin{proof}
The lower bound is derived in Lemma~\ref{l_tel}, and the 
upper bounds in Theorem~\ref{t_te}. \qed
\end{proof}

Theorem~\ref{t_trace} implies that $\trace(\mt)$ has a small absolute error if 
Algorithm~\ref{alg:randsubspace} applies a sufficient number $q$ of power iterations.
More specifically, only a few iterations are required if the eigenvalue gap is large and $\gamma\ll 1$.
The term $\theta_1$ quantifies the contribution of the starting guess $\mom$ in the dominant subspace $\mU_1$. The minimum in $\theta_1$ is attained by
$\gamma^{q-1}\,\normtwo{\mom_2\mom_1^{\dagger}}$  when,
relative to the eigenvalue gap and the iteration count $q$, the starting guess
$\mom$ has only a ``weak'' contribution in the dominant subspace.

We start with the derivation of the lower bound, which relies on the 
variational inequalities for Hermitian matrices, and shows that
the trace of a restriction can never exceed that of the original matrix.

\begin{lemma}\label{l_tel}
With the assumptions in Section~\ref{s_ass}, let $\mt=\mq^*\ma\mq$ be computed by 
Algorithm~\ref{alg:randsubspace}. Then 
$$\trace(\ma)-\trace(\mt)\geq \trace(\mlam_2)-(\lambda_{k+1}+\cdots+\lambda_{\ell})\geq 0.$$
\end{lemma}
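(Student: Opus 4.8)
The plan is to reduce the whole chain to one elementary compression inequality for eigenvalues of Hermitian matrices. First I would rewrite the left-hand inequality. Using the block structure of Section~\ref{s_ass}, $\trace(\ma) = \trace(\mlam_1) + \trace(\mlam_2)$ with $\trace(\mlam_1) = \lambda_1 + \cdots + \lambda_k$, so
$$\trace(\ma) - \trace(\mt) \;\geq\; \trace(\mlam_2) - (\lambda_{k+1} + \cdots + \lambda_{\ell})$$
is equivalent to the single statement $\trace(\mt) \leq \lambda_1 + \cdots + \lambda_{\ell}$; that is, the trace of the $\ell\times\ell$ compression $\mt = \mq^*\ma\mq$ cannot exceed the sum of the $\ell$ largest eigenvalues of $\ma$.

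The key step is to prove this compression bound. Since $\mq^*\mq = \mi_{\ell}$, the columns $\vec{q}_1,\ldots,\vec{q}_{\ell}$ of $\mq$ form an orthonormal system and $\trace(\mt) = \sum_{i=1}^{\ell} \vec{q}_i^*\ma\vec{q}_i$. I would then invoke Ky Fan's maximum principle: over all orthonormal systems $\{\vec{x}_i\}_{i=1}^{\ell}$ the quantity $\sum_{i=1}^{\ell}\vec{x}_i^*\ma\vec{x}_i$ is maximized, with value $\lambda_1 + \cdots + \lambda_{\ell}$, by the top $\ell$ eigenvectors of $\ma$; hence $\trace(\mt) \leq \lambda_1 + \cdots + \lambda_{\ell}$. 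Equivalently, one can appeal to the Cauchy interlacing theorem for the compression $\mt$: its eigenvalues $\mu_1 \geq \cdots \geq \mu_{\ell}$ satisfy $\mu_i \leq \lambda_i$ for $i = 1,\ldots,\ell$, and summing over $i$ gives the same conclusion. Either route establishes the first inequality.

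For the second inequality, $\trace(\mlam_2) - (\lambda_{k+1} + \cdots + \lambda_{\ell}) \geq 0$, note that $\mlam_2 = \diag(\lambda_{k+1},\ldots,\lambda_n)$ and that $\ell \leq n$ by the assumptions of Section~\ref{s_ass}; therefore the difference equals $\lambda_{\ell+1} + \cdots + \lambda_n$ (an empty, hence zero, sum when $\ell = n$), which is nonnegative precisely because $\ma$ is positive semi-definite, so $\lambda_j \geq 0$ for all $j$. This finishes the proof.

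I do not expect a genuine obstacle: the lemma just formalizes that restricting a positive semi-definite matrix to a subspace can only lose eigenvalue mass. The only points requiring a little care are the bookkeeping that turns the stated difference of traces into the clean form $\trace(\mt) \leq \lambda_1 + \cdots + \lambda_{\ell}$, and the observation that positive semi-definiteness (i.e.\ $\lambda_j \geq 0$) is exactly what is needed for the nonnegativity in the second inequality.
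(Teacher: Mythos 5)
Your proposal is correct and follows essentially the same route as the paper: the paper embeds $\mq$ into a unitary matrix and applies the Cauchy interlacing theorem to the compression $\mt$, which is exactly the second of the two equivalent arguments you offer (Ky Fan's maximum principle being the variational restatement of the same fact), and the bookkeeping for both inequalities matches the paper's.
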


\begin{proof}
Choose  $\mq_{\perp}\in\complex^{n\times (n-\ell)}$ so that 
$\hat{\mq}\equiv\begin{pmatrix}\mq & \mq_{\perp}\end{pmatrix}\in\cnn$
is unitary, and partition
$$\hat{\mq}^*\ma\hat{\mq}
=\begin{pmatrix}\mt & \ma_{12}\\ \ma_{12}^* & \ma_{22}\end{pmatrix},$$
where $\mt=\mq^* \ma \mq$ is the important submatrix.
The matrices  $\hat{\mq}^*\ma\hat{\mq}$ and $\ma$ have the same eigenvalues
$0\leq \lambda_n\leq \cdots\leq \lambda_1$. With
$\lambda_{\ell}(\mt)\leq \cdots\leq \lambda_1(\mt)$ being 
the eigenvalues of $\mt$, the Cauchy-interlace theorem \cite[Section 10-1]{Par80} implies
$$0\leq\lambda_{(n-\ell)+j}\leq \lambda_j(\mt)\leq \lambda_j, \qquad 1\leq j\leq \ell.$$
Since $\lambda_j\geq 0$, this implies
(for $\ell=k$ we interpret $\sum_{j=k+1}^{\ell}{\lambda_{j}}=0$)
\begin{eqnarray*} 
\trace(\mt)&\leq &\sum_{j=1}^{\ell}{\lambda_j}=
\trace(\mlam_1) + \sum_{j=k+1}^{\ell}{\lambda_j}\\
&=&\trace(\ma)-\trace(\mlam_2)+\sum_{j=k+1}^{\ell}{\lambda_{j}}\leq \trace(\ma),
\end{eqnarray*}
where the last inequality follows from 
$\sum_{j=k+1}^{\ell}{\lambda_j}\leq \sum_{j=k+1}^n{\lambda_j}=\trace(\mlam_2)$. \qed
\end{proof}
\medskip

Next we derive the two upper bounds.
The first one, (\ref{e_te1}), is preferable when,
relative to the eigenvalue gap and the iteration count $q$, the starting guess
$\mom$ has only a ``weak'' contribution in the dominant subspace.

\begin{theorem}\label{t_te}
With the assumptions in Section~\ref{s_ass}, let $\mt=\mq^*\ma\mq$ be computed by 
Algorithm~\ref{alg:randsubspace}. Then
\begin{eqnarray}\label{e_te1}
\trace(\ma) - \trace(\mt) \leq  
\left(1  + \gamma^{q-1}\>\normtwo{\mom_2\mom_1^{\dagger}}\right)\>\trace(\mlam_2).
\end{eqnarray}
If $0<\|\mom_2\mom_1^{\dagger}\|_2\leq \gamma^{-q}$, then the following bound is tighter,
\begin{eqnarray}\label{e_te2}
\trace(\ma)-\trace(\mt)\leq 
\left(1+\gamma^{2q-1}\>\|\mom_2\mom_1^{\dagger}\|_2^2\right)\> \trace(\mlam_2).
\end{eqnarray}
\end{theorem}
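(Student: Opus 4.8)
The plan is to pass from the restriction $\mt=\mq^*\ma\mq$ to the orthogonal projector $\mat{P}\equiv\mq\mq^*$ onto $\range(\ma^q\mom)=\range(\mq)$ and to work with perturbation identities for $\mi-\mat{P}$. Since $\mq^*\mq=\mi_\ell$, cyclic invariance of the trace gives $\trace(\mt)=\trace(\ma\mat{P})$, so with the Hermitian square root $\ma^{1/2}$ and $(\mi-\mat{P})^2=\mi-\mat{P}$,
\[
\trace(\ma)-\trace(\mt)=\trace\!\big((\mi-\mat{P})\ma(\mi-\mat{P})\big)=\normf{(\mi-\mat{P})\mU_1\mlam_1^{1/2}}^2+\normf{(\mi-\mat{P})\mU_2\mlam_2^{1/2}}^2 ,
\]
where the decomposition $\ma^{1/2}=\mU_1\mlam_1^{1/2}\mU_1^*+\mU_2\mlam_2^{1/2}\mU_2^*$ together with $\mU_1^*\mU_2=\mat{0}$ annihilates the cross term. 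Because $\normtwo{\mi-\mat{P}}=1$, the second summand is at most $\normf{\mlam_2^{1/2}}^2=\trace(\mlam_2)$; this produces the leading $1\cdot\trace(\mlam_2)$ in both (\ref{e_te1}) and (\ref{e_te2}), and everything else concerns the dominant-subspace term $\normf{(\mi-\mat{P})\mU_1\mlam_1^{1/2}}^2$.

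To control that term I would exhibit a rank-$k$ subspace of $\range(\mq)$ that is close to $\range(\mU_1)$. From $\ma^q\mom=\mU_1\mlam_1^q\mom_1+\mU_2\mlam_2^q\mom_2$ and the rank hypotheses (\ref{e_ass1})--(\ref{e_ass2}) (so that $\mom_1\mom_1^{\dagger}=\mi_k$ and $\ma^q\mom=\mq\mr$ with $\mr$ nonsingular), right multiplication by $\mom_1^{\dagger}\mlam_1^{-q}$ produces $\mat{Z}\equiv\mU_1+\mU_2\mat{F}$, where $\mat{F}\equiv\mlam_2^q\mom_2\mom_1^{\dagger}\mlam_1^{-q}$, and $\range(\mat{Z})\subseteq\range(\ma^q\mom)=\range(\mq)$. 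Hence $(\mi-\mat{P})\mat{Z}=\mat{0}$, so $(\mi-\mat{P})\mU_1=-(\mi-\mat{P})\mU_2\mat{F}$, and since $\normtwo{\mU_2}=1$,
\[
\normf{(\mi-\mat{P})\mU_1\mlam_1^{1/2}}\le\normf{\mat{F}\mlam_1^{1/2}}=\normf{\mlam_2^q\mom_2\mom_1^{\dagger}\mlam_1^{1/2-q}},\qquad \normtwo{\mat{F}}\le\gamma^q\normtwo{\mom_2\mom_1^{\dagger}},
\]
the last inequality from $\normtwo{\mlam_2^q}\normtwo{\mlam_1^{-q}}=\lambda_{k+1}^q\lambda_k^{-q}=\gamma^q$.

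For (\ref{e_te2}) I would bound $\normf{\mlam_2^q\mom_2\mom_1^{\dagger}\mlam_1^{1/2-q}}^2$ entrywise: the weight on the $(i,j)$ entry is $(\mlam_2)_{ii}^{2q}(\mlam_1)_{jj}^{1-2q}=(\mlam_2)_{ii}\big((\mlam_2)_{ii}/(\mlam_1)_{jj}\big)^{2q-1}\le\gamma^{2q-1}(\mlam_2)_{ii}$, using $(\mlam_2)_{ii}\le\lambda_{k+1}$, $(\mlam_1)_{jj}\ge\lambda_k$, and $2q-1\ge1$; summing over $i,j$ and then using $\trace\!\big(\mlam_2\,\mom_2\mom_1^{\dagger}(\mom_2\mom_1^{\dagger})^*\big)\le\normtwo{\mom_2\mom_1^{\dagger}}^2\trace(\mlam_2)$ gives $\normf{(\mi-\mat{P})\mU_1\mlam_1^{1/2}}^2\le\gamma^{2q-1}\normtwo{\mom_2\mom_1^{\dagger}}^2\trace(\mlam_2)$, which is (\ref{e_te2}). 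For (\ref{e_te1}) I would establish a companion estimate of the same dominant term that is linear, not quadratic, in $\normtwo{\mom_2\mom_1^{\dagger}}$ and carries the smaller power $\gamma^{q-1}$; since the bound in (\ref{e_te2}) equals the bound in (\ref{e_te1}) multiplied by $\gamma^q\normtwo{\mom_2\mom_1^{\dagger}}$, this companion estimate is the sharper of the two exactly when $\normtwo{\mom_2\mom_1^{\dagger}}>\gamma^{-q}$, i.e. when the starting guess contributes only weakly in the dominant subspace, which is precisely the case distinction recorded in Theorem~\ref{t_te}.

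The hard part is this last estimate. Every naive redistribution of norms in $\normf{\mlam_2^q\mom_2\mom_1^{\dagger}\mlam_1^{1/2-q}}^2$ simply reproduces the quadratic bound (\ref{e_te2}), so the linear-in-$\normtwo{\mom_2\mom_1^{\dagger}}$ bound will need a more careful split: retain only one factor of $\gamma^q\normtwo{\mom_2\mom_1^{\dagger}}$ (via $\normtwo{\mat{F}}\le\gamma^q\normtwo{\mom_2\mom_1^{\dagger}}$), control the remaining part of $(\mi-\mat{P})\mU_1$ through $\normtwo{\mi-\mat{P}}\le1$, and absorb the leftover spectral factor using the elementary bound $\trace(\mlam_2)\ge\lambda_{k+1}$ (equivalently the crude inequality $\trace(\ma)-\trace(\mt)\le\trace(\ma)$ of Lemma~\ref{l_tel}), so that no spurious dependence on $\lambda_1/\lambda_k$ or on $k$ survives. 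Once the dominant term is bounded by $\theta_1\trace(\mlam_2)$ with $\theta_1=\min\{\gamma^{q-1}\normtwo{\mom_2\mom_1^{\dagger}},\,\gamma^{2q-1}\normtwo{\mom_2\mom_1^{\dagger}}^2\}$, adding it to the $\trace(\mlam_2)$ from the second summand yields (\ref{e_te1}) and (\ref{e_te2}).
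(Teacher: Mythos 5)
Your framework is sound, and it is at heart the same mechanism as the paper's: your matrix $\mat{Z}=\mU_1+\mU_2\mf$ with $\mf=\mlam_2^q\mom_2\mom_1^{\dagger}\mlam_1^{-q}$ spans exactly the $k$-dimensional subspace of $\range(\mq)$ that the paper isolates via its compression $\mq\mw=(\mU_1+\mU_2\mf)(\mi_k+\mf^*\mf)^{-1/2}$; you simply do the bookkeeping with the projector $\mat{P}=\mq\mq^*$ and Frobenius norms instead of with $\trace((\mq\mw)^*\ma(\mq\mw))$ and the von Neumann trace theorem. The identity $\trace(\ma)-\trace(\mt)\le\trace(\ma(\mi-\mat{P}))=\normf{(\mi-\mat{P})\mU_1\mlam_1^{1/2}}^2+\normf{(\mi-\mat{P})\mU_2\mlam_2^{1/2}}^2$ (with equality when $\ell$ is replaced by the rank-$k$ compression, and ``$\le$'' via Lemma~\ref{l_tel} in general), the annihilation $(\mi-\mat{P})(\mU_1+\mU_2\mf)=\vzero$, and your entrywise estimate of $\normf{\mlam_2^q\mom_2\mom_1^{\dagger}\mlam_1^{1/2-q}}^2$ are all correct, so your derivation of (\ref{e_te2}) is complete and arguably cleaner than the paper's Steps 4--6.

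The gap is the bound (\ref{e_te1}), which you acknowledge you have not established, and the strategy you sketch for it does not assemble: keeping one factor $\normtwo{\mf}\le\gamma^q\normtwo{\mom_2\mom_1^{\dagger}}$ and bounding the rest by $\normtwo{\mi-\mat{P}}\le 1$ leaves you with a spectral-norm remainder such as $\normtwo{\mlam_1^{1/2}}$ or $\|\mlam_1^{1/2}\|_F$, which cannot be converted into $\trace(\mlam_2)$ by the inequality $\trace(\mlam_2)\ge\lambda_{k+1}$ (these quantities live on opposite sides of the gap). The correct move --- and the one the paper makes --- is to use the identity $(\mi-\mat{P})\mU_1=-(\mi-\mat{P})\mU_2\mf$ only \emph{once} inside the trace: write
\begin{equation*}
\normf{(\mi-\mat{P})\mU_1\mlam_1^{1/2}}^2=\trace\bigl(\mlam_1\,\mU_1^*(\mi-\mat{P})\mU_1\bigr)
=-\trace\bigl(\mU_1^*(\mi-\mat{P})\mU_2\cdot\mf\mlam_1\bigr),
\end{equation*}
apply the von Neumann trace inequality with $\normtwo{\mU_1^*(\mi-\mat{P})\mU_2}\le 1$, and then bound the nuclear norm of $\mf\mlam_1=\mlam_2^q\,(\mom_2\mom_1^{\dagger})\,\mlam_1^{1-q}$ by the singular-value product inequalities:
$\sum_j\sigma_j(\mlam_2^q\,\mom_2\mom_1^{\dagger}\,\mlam_1^{1-q})\le\normtwo{\mom_2\mom_1^{\dagger}}\,\lambda_k^{1-q}\sum_j\lambda_{k+j}^q\le\gamma^{q-1}\normtwo{\mom_2\mom_1^{\dagger}}\trace(\mlam_2)$.
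This is where the single power $\normtwo{\mom_2\mom_1^{\dagger}}$ and the exponent $q-1$ (one power of $\mlam_2$ is retained as $\trace(\mlam_2)$, the other $q-1$ are bounded by $\lambda_{k+1}^{q-1}$) both come from; without this trace-norm step your approach only yields (\ref{e_te2}).
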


\begin{proof}
The proof proceeds in six steps. The first five steps are the same for both bounds.

\paragraph{1. Shrinking the space from $\ell$ to $k$ dimensions}
If $\mw\in\complex^{\ell\times k}$ is any matrix with orthonormal columns, then
Lemma~\ref{l_tel} implies
$$ \trace(\ma)-\trace(\mq^*\ma\mq)\leq \,
\mathcal{U}\equiv\trace(\ma)-\trace\left((\mq\mw)^*\,\ma\,(\mq\mw)\right).$$
The upper bound $\mathcal{U}$ replaces the matrix $\mq^*\ma\mq$ of order $\ell$ 
by the matrix $(\mq\mw)^*\,\ma\,(\mq\mw)$ of order $k\leq\ell$. 
The eigendecomposition of $\ma$ yields
$$\trace\left((\mq\mw)^*\,\ma\,(\mq\mw)\right)=t_1+t_2,$$
where  dominant eigenvalues are distinguished from subdominant ones by  
\begin{eqnarray*}
t_1\equiv \trace\left((\mU_1^*\mq\mw)^*\,\mlam_1\,(\mU_1^*\mq\mw)\right), 
\quad 
 t_2\equiv \trace\left((\mU_2^*\mq\mw)^*\,\mlam_2\,(\mU_2^*\mq\mw\right).
 \end{eqnarray*}
Note that $t_1$ and $t_2$ are real. Now we can write the upper bound as
\begin{eqnarray}\label{e_mathcalu}
\mathcal{U}=\trace(\ma)-t_1-t_2.
\end{eqnarray}

\paragraph{2. Exploiting the structure of $\mq$}
Assumption (\ref{e_ass1}) implies that $\mr$ is nonsingular, hence we can solve for $\mq$
in $\ma^q\,\mom=\mq\mr$, to obtain
\begin{eqnarray}\label{e_q}
\mq=(\ma^q\mom)\,\mr^{-1}=\mU\mlam^q\,\mU^*\,\mom\mr^{-1}=\mU\,
\begin{pmatrix}\mlam_1^q\,\mom_1 \\ \mlam_2^q\,\mom_2\end{pmatrix}\,\mr^{-1}.
\end{eqnarray}

\paragraph{3. Choosing $\mw$}
Assumption (\ref{e_ass2}) implies that the $k\times \ell$ matrix $\mom_1$ has full row rank, 
and a right inverse $\mom_1^{\dagger} =\mom_1^*\,(\mom_1\mom_1^*)^{-1}$. 
Our choice for  $\mw$ is 
$$\mw\equiv \mr\,\mom_1^{\dagger}\mlam_1^{-q}\,(\mi_k +\mf^*\mf)^{-1/2} \qquad \text{where}\qquad
\mf\equiv \mlam_2^q\,\mom_2\mom_1^{\dagger}\, \mlam_1^{-q},$$
so that we can express (\ref{e_q}) as 
\begin{eqnarray}\label{e_qw}
\mq\mw=\mU\,\begin{pmatrix}\mlam_1^q\,\mom_1 \\ \mlam_2^q\,\mom_2\end{pmatrix}\,\mr^{-1}\,\mw=
\mU\,\begin{pmatrix}\mi_k\\ \mf\end{pmatrix} (\mi_k +\mf^*\mf)^{-1/2}.
\end{eqnarray}
The rightmost expression shows that $\mq\mw$ has orthonormal columns. 
To see that $\mw$ itself also has orthonormal columns, show that $\mw^*\mw=\mi_k$ 
with the help of 
$$\mr^*\mr=(\mq\mr)^*(\mq\mr)=(\mlam_1^q\,\mom_1)^*(\mlam_1^q\,\mom_1)+
(\mlam_2^q\,\mom_2)^*(\mlam_2^q\,\mom_2).$$

\paragraph{4. Determining $\mathcal{U}$ in (\ref{e_mathcalu})}
From $\mU_1^*\mq\mw=(\mi+\mf^*\mf)^{-1/2}$ in (\ref{e_qw}) follows
$$t_1=\trace\left((\mi+\mf^*\mf)^{-1/2}\,\mlam_1\,(\mi+\mf^*\mf)^{-1/2}\right)=
\trace\left(\mlam_1\, (\mi+\mf^*\mf)^{-1}\right).$$
From (\ref{e_qw}) also follows $\mU_2^*\mq\mw=\mf\,(\mi+\mf^*\mf)^{-1/2}$, so that 
$$t_2=\trace\left((\mi+\mf^*\mf)^{-1/2}\,\mf^*\,\mlam_2\,\mf\,(\mi+\mf^*\mf)^{-1/2}\right)=
\trace\left(\mlam_2\,\mf\, (\mi+\mf^*\mf)^{-1}\,\mf^*\right).$$
Distinguish dominant from subdominant eigenvalues in $\mathcal{U}$ via
$\mathcal{U}=\mathcal{U}_1+\mathcal{U}_2$, where
\begin{eqnarray*}
\mathcal{U}_1\equiv \trace(\mlam_1)-t_1, \qquad \mathcal{U}_2\equiv \trace(\mlam_2)-t_2.
\end{eqnarray*}
Since $t_1$ and $t_2$ are real, so are $\mathcal{U}_1$ and $\mathcal{U}_2$.
With the identity 
$$\mlam_1\,\left(\mi-(\mi+\mf^*\mf)^{-1}\right)=
\mlam_1\,\mf^*\mf\,(\mi+\mf^*\mf)^{-1},$$
and remembering that $\mf=\mlam_2^q\,\mz\,\mlam_1^{-q}$  with
$\mz\equiv\mom_2\mom_1^{\dagger}$, we obtain
$\mathcal{U}=\mathcal{U}_1+\mathcal{U}_2$ with
\begin{eqnarray*}
\mathcal{U}_1 &= &\trace\left(\mlam_1^{1-q}\,\mz^*\mlam_2^q\>\mf\,(\mi+\mf^*\mf)^{-1}\right)\\
\mathcal{U}_2&=&\trace\left(\mlam_2\>(\mi-\mf\,(\mi+\mf^*\mf)^{-1}\mf^*)\right).
\end{eqnarray*}

\paragraph{5. Bounding  $\mathcal{U}$}
Since $\mlam_2$ and $\mi-\mf\,(\mi+\mf^*\mf)^{-1}\mf^*$ both have dimension 
$(n-k)\times (n-k)$,
the \textit{von Neumann trace theorem} \cite[Theorem 7.4.11]{HoJ13} can be applied,
$$\mathcal{U}_2\leq \sum_j{\sigma_j(\mlam_2)\,\sigma_j(\mi-\mf\,(\mi+\mf^*\mf)^{-1}\mf^*)}
\leq \sum_j{\sigma_j(\mlam_2)}=\trace(\mlam_2).$$
The last equality is true because the singular values of a Hermitian positive semi-definite matrix
are also the eigenvalues.
Analogously, $\mlam_2^q\,\mz\,\mlam_1^{1-q}$ and $\mf\,(\mi+\mf^*\mf)^{-1}$ both 
have dimension $(n-k)\times k$, so that
\begin{eqnarray*}
\mathcal{U}_1&\leq& 
\sum_j{\sigma_j(\mlam_2^q\,\mz\,\mlam_1^{1-q})\,\sigma_j(\mf\,(\mi+\mf^*\mf)^{-1})}\\
&\leq & \|\mf\,(\mi+\mf^*\mf)^{-1})\|_2\,\sum_j{\sigma_j(\mlam_2^q\,\mz\,\mlam_1^{1-q})}
\end{eqnarray*}
Repeated applications of the singular value inequalities \cite[Theorem 3.314]{HoJ91} 
for the second factor yield
\begin{eqnarray*}
\sum_j{\sigma_j(\mlam_2^q\,\mz\,\mlam_1^{1-q})}
&\leq &\|\mz\|_2\,\|\mlam_1\|_2^{1-q}\,\sum_j{\sigma_j(\mlam_2^q)}\\
&\leq& \|\mz\|_2\,\|\mlam_1\|_2^{1-q}\,\|\mlam_2\|_2^{q-1}\,\sum_j{\sigma_j(\mlam_2)}
=\gamma^{q-1}\,\|\mz\|_2\,\trace(\mlam_2)
\end{eqnarray*}
Substituting this into the bound for $\mathcal{U}_1$ gives
\begin{eqnarray*}
\mathcal{U}_1&\leq& 
 \|\mf\,(\mi+\mf^*\mf)^{-1}\|_2\>\gamma^{q-1}\,\|\mz\|_2\,\trace(\mlam_2).
\end{eqnarray*}

\paragraph{6. Bounding $\|\mf\,(\mi+\mf^*\mf)^{-1}\|_2$}
For (\ref{e_te1}) we bound $\|\mf\,(\mi+\mf^*\mf)^{-1}\|_2\leq 1$, which yields
$\mathcal{U}_1\leq \gamma^{q-1}\,\|\mz\|_2\,\trace(\mlam_2)$.
For (\ref{e_te2}) we use 
$$\|\mf\,(\mi+\mf^*\mf)^{-1}\|_2\leq 
 \|\mf\|_2\leq \|\mlam_2\|_2^q\,\|\mz\|_2\,\|\mlam_1\|_2^{-q}=\gamma^q\,\|\mz\|_2,$$
which yields $\mathcal{U}_1\leq \gamma^{2q-1}\|\mz\|_2^2\>\trace(\mlam_2)$.

Comparing the two preceding bounds for $\mathcal{U}_1$ shows that
(\ref{e_te2}) is tighter than (\ref{e_te1}) if 
$\gamma^{2q-1}\>\|\mz\|_2^2\leq \gamma^{q-1}\>\|\mz\|_2$,
that is $\|\mz\|_2\leq \gamma^{-q}$. \qed
\end{proof}
\medskip

\begin{remark}\label{r_te}
The two special cases below illustrate that, even in a best-case scenario,
the accuracy of $\trace(\mt)$ is limited by $\trace(\mlam_2)$.

\begin{itemize}
\item If $\ell=k$ and $\mom=\mU_1$ then
$$\trace(\ma) - \trace(\mt) = \trace(\mlam_2).$$
This follows from Lemma~\ref{l_tel}, and from both bounds in Theorem~\ref{t_te}
with $\mom_1=\mi_k$ and $\mom_2=\vzero$.
 \item If $\ell>k$ and $\mom$ consists of the columns of $\mU$ associated
with the dominant eigenvalues $\lambda_1, \ldots,\lambda_{\ell}$ of $\ma$, then
$$\trace(\mlam_2) - (\lambda_{k+1} +\cdots+\lambda_{\ell})\leq 
\trace(\ma) - \trace(\mt) \leq  \trace(\mlam_2).$$
This follows from Lemma~\ref{l_tel}, and from both bounds in Theorem~\ref{t_te}
with $\mom_1=\begin{pmatrix}\mi_k & \vzero_{k\times (k-\ell)}\end{pmatrix}$ and
$\mom_2=\begin{pmatrix} \vzero_{(n-k)\times k} & *\end{pmatrix}$.

Theorem~\ref{t_te} cannot be tight for $\ell>k$ because step~3 of the proof deliberately
transitions to a matrix with $k$ columns. Hence the eigenvalues $\lambda_{k+1},\ldots,\lambda_{\ell}$
do not appear in the bounds of Theorem~\ref{t_te}.
\end{itemize}
\end{remark}

\subsection{Log Determinant Estimator}\label{s_logdetbounds}
Subject to the Assumptions in Section~\ref{s_ass},
we derive the following absolute error bounds 
for Hermitian positive semi-definite matrices~$\ma$ and matrices $\mt$
computed by Algorithm~\ref{alg:randsubspace}.

\begin{theorem}\label{t_logdet}
With the assumptions in Section~\ref{s_ass}, 
let $\mt=\mq^*\ma\mq$ be computed by Algorithm~\ref{alg:randsubspace}. Then
\begin{eqnarray*}
0\leq \log{\det(\mi_n+\ma)}-\log{\det(\mi_{\ell}+\mt)}\leq 
 \log{\det\left(\mi_{n-k}+\mlam_2\right)}+ \log{\det\left(\mi_{n-k}+\theta_2\, \mlam_2\right)}
 \end{eqnarray*}
where
$\ \theta_2\equiv \gamma^{2q-1}\>\normtwo{\mom_2\mom_1^\dagger}^2\,
\min\{1,\tfrac{1}{\lambda_k}\}$.
\end{theorem}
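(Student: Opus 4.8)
The plan is to follow the proof of Theorem~\ref{t_te}, replacing the trace by the map $X\mapsto\log\det(\mi+X)$ and using its monotonicity and subadditivity on the positive semidefinite cone where that proof used the von Neumann trace inequality. Throughout set $\mz\equiv\mom_2\mom_1^{\dagger}$. The lower bound is obtained exactly as in Lemma~\ref{l_tel}: since $\mt=\mq^*\ma\mq$ with $\mq^*\mq=\mi_{\ell}$, the Cauchy interlacing theorem \cite{Par80} gives $0\le\lambda_j(\mt)\le\lambda_j$ for $1\le j\le\ell$, and because $\log(1+t)\ge0$ for $t\ge0$,
\[\log\det(\mi_{\ell}+\mt)=\sum_{j=1}^{\ell}\log(1+\lambda_j(\mt))\le\sum_{j=1}^{n}\log(1+\lambda_j)=\log\det(\mi_n+\ma).\]

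For the upper bound I would first reduce from $\ell$ to $k$ dimensions, exactly as in Step~1 of the proof of Theorem~\ref{t_te}: with the same auxiliary matrix $\mw\in\complex^{\ell\times k}$ (orthonormal columns), interlacing gives $\log\det(\mi_k+(\mq\mw)^*\ma(\mq\mw))\le\log\det(\mi_{\ell}+\mt)$, so it is enough to bound $\mathcal{L}\equiv\log\det(\mi_n+\ma)-\log\det(\mi_k+(\mq\mw)^*\ma(\mq\mw))$. From~(\ref{e_qw}), i.e.\ $\mq\mw=\mU\begin{pmatrix}\mi_k\\\mf\end{pmatrix}(\mi_k+\mf^*\mf)^{-1/2}$ with $\mf=\mlam_2^q\mz\mlam_1^{-q}$, one gets $(\mq\mw)^*\ma(\mq\mw)=(\mi_k+\mf^*\mf)^{-1/2}(\mlam_1+\mf^*\mlam_2\mf)(\mi_k+\mf^*\mf)^{-1/2}$, hence, by $\det(\mathbf{AB})=\det(\mathbf{BA})$ and $\det(\mathbf{A}+\mathbf{B})=\det\mathbf{A}\,\det(\mi+\mathbf{A}^{-1}\mathbf{B})$,
\[\det\!\big(\mi_k+(\mq\mw)^*\ma(\mq\mw)\big)=\frac{\det\!\big(\mi_k+\mlam_1+\mf^*(\mi_{n-k}+\mlam_2)\mf\big)}{\det(\mi_k+\mf^*\mf)}.\]
Together with $\det(\mi_n+\ma)=\det(\mi_k+\mlam_1)\det(\mi_{n-k}+\mlam_2)$ this identity is the starting point for two complementary estimates of $\mathcal{L}$.

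For the first, complete $\mq\mw$ to a unitary matrix whose remaining block is $\mU\begin{pmatrix}-\mf^*\\\mi_{n-k}\end{pmatrix}(\mi_{n-k}+\mf\mf^*)^{-1/2}$ and apply Fischer's determinant inequality to the positive definite matrix $\mi_n+\ma$; a direct computation of the corresponding $(n-k)\times(n-k)$ diagonal block gives $\mathcal{L}\le\log\det\!\big(\mi_{n-k}+\mathbf{C}^{-1/2}(\mlam_2+\mf\mlam_1\mf^*)\mathbf{C}^{-1/2}\big)$ with $\mathbf{C}=\mi_{n-k}+\mf\mf^*\succeq\mi_{n-k}$, and then monotonicity ($\mathbf{C}^{-1}\preceq\mi_{n-k}$) together with subadditivity of $\log\det(\mi+\cdot)$ yield $\mathcal{L}\le\log\det(\mi_{n-k}+\mlam_2)+\log\det(\mi_{n-k}+\mf\mlam_1\mf^*)$. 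For the second, in the displayed identity use Sylvester's identity to write $\det(\mi_k+\mlam_1+\mf^*(\mi_{n-k}+\mlam_2)\mf)=\det(\mi_k+\mlam_1)\det(\mi_{n-k}+(\mi_{n-k}+\mlam_2)\mf(\mi_k+\mlam_1)^{-1}\mf^*)$; the factor $\det(\mi_k+\mlam_1)$ then cancels the one from $\det(\mi_n+\ma)$, and since $\log\det(\mi_{n-k}+(\mi_{n-k}+\mlam_2)\mf(\mi_k+\mlam_1)^{-1}\mf^*)\ge0$ we may drop it, obtaining $\mathcal{L}\le\log\det(\mi_{n-k}+\mlam_2)+\log\det(\mi_k+\mf^*\mf)=\log\det(\mi_{n-k}+\mlam_2)+\log\det(\mi_{n-k}+\mf\mf^*)$.

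Finally I would bound the two ``subspace'' terms with the singular value inequalities used in Step~5 of the proof of Theorem~\ref{t_te}. Since $\mf\mlam_1\mf^*=\mlam_2^q\mz\mlam_1^{1-2q}\mz^*\mlam_2^q$, peeling $\mlam_2^{1/2}$ off as the $\sigma_j$-carrying factor gives $\lambda_j(\mf\mlam_1\mf^*)=\sigma_j(\mlam_1^{1/2-q}\mz^*\mlam_2^q)^2\le\normtwo{\mlam_1^{1/2-q}}^2\normtwo{\mz}^2\normtwo{\mlam_2^{q-1/2}}^2\lambda_j(\mlam_2)=\gamma^{2q-1}\normtwo{\mz}^2\lambda_j(\mlam_2)$, and likewise $\lambda_j(\mf\mf^*)=\sigma_j(\mf)^2\le\gamma^{2q-1}\lambda_k^{-1}\normtwo{\mz}^2\lambda_j(\mlam_2)$. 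Because $t\mapsto\log(1+t)$ is increasing, $\log\det(\mi_{n-k}+\mf\mlam_1\mf^*)\le\log\det(\mi_{n-k}+\gamma^{2q-1}\normtwo{\mz}^2\mlam_2)$ and $\log\det(\mi_{n-k}+\mf\mf^*)\le\log\det(\mi_{n-k}+\gamma^{2q-1}\lambda_k^{-1}\normtwo{\mz}^2\mlam_2)$; taking the smaller of the two bounds for $\mathcal{L}$, and using $\mathcal{L}\ge\log\det(\mi_n+\ma)-\log\det(\mi_{\ell}+\mt)$, yields the theorem with $\theta_2=\gamma^{2q-1}\normtwo{\mz}^2\min\{1,1/\lambda_k\}$. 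I expect the main obstacle to be the determinant bookkeeping in the previous paragraph: the two manipulations must be arranged so that $\log\det(\mi_{n-k}+\mlam_2)$ is produced exactly while what remains is $\mf\mlam_1\mf^*$ (resp.\ $\mf\mf^*$) and not some mixed quantity whose eigenvalues are not bounded by $\gamma^{2q-1}\normtwo{\mz}^2\lambda_j(\mlam_2)$; running only the first manipulation yields merely the constant $\gamma^{2q-1}\normtwo{\mz}^2$ without the $\min$.
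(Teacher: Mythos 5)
Your proof is correct and reaches both branches of the $\min$ in $\theta_2$, but the middle of the argument takes a genuinely different route from the paper's. The paper (Lemma~\ref{l_ldel} plus Theorem~\ref{t_ld}) reduces to $k$ dimensions exactly as you do and then works entirely with Loewner comparisons of the compressed matrix $\mh=(\mi_k+\mf^*\mf)^{-1/2}(\mlam_1+\mf^*\mlam_2\mf)(\mi_k+\mf^*\mf)^{-1/2}$: it drops $\mf^*\mlam_2\mf\succeq\vzero$, applies Sylvester's identity, and invokes Ostrowski's theorem to bound the error term $\mathcal{E}$ by $\logdet(\mi_k+\mlam_1^{1/2}\mf^*\mf\mlam_1^{1/2})$ for the first branch and by $\logdet(\mi_k+\mf^*\mf)$ for the second. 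You instead obtain the first branch from Fischer's block-determinant inequality applied to $\mi_n+\ma$ in the unitary basis completing $\mq\mw$ (your completion block is indeed orthonormal and orthogonal to $\mq\mw$), combined with subadditivity of $\logdet(\mi+\cdot)$ on the positive semidefinite cone; and the second branch from an exact determinant identity in which a manifestly nonnegative term is discarded. Since $\det(\mi_{n-k}+\mf\mlam_1\mf^*)=\det(\mi_k+\mlam_1^{1/2}\mf^*\mf\mlam_1^{1/2})$ and $\det(\mi_{n-k}+\mf\mf^*)=\det(\mi_k+\mf^*\mf)$ by Sylvester, your intermediate quantities coincide with the paper's, and your closing singular-value estimates match Steps~6a and~5b; your second-branch derivation is arguably cleaner, as it shows exactly which nonnegative quantity is thrown away. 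Two points to tighten: (i) $\mathbf{C}^{-1/2}\mathbf{N}\mathbf{C}^{-1/2}\preceq\mathbf{N}$ is \emph{not} a valid Loewner inequality in general even when $\mathbf{C}\succeq\mi$; what you actually need, and what Ostrowski's theorem delivers, is the eigenvalue domination $\lambda_j(\mathbf{C}^{-1/2}\mathbf{N}\mathbf{C}^{-1/2})\leq\lambda_j(\mathbf{N})$, which suffices for the determinant comparison; (ii) the subadditivity $\logdet(\mi+\mathbf{A}+\mathbf{B})\leq\logdet(\mi+\mathbf{A})+\logdet(\mi+\mathbf{B})$ for positive semidefinite $\mathbf{A},\mathbf{B}$ is true but deserves a one-line justification, e.g.\ via $\det(\mi+\mathbf{A}+\mathbf{B})=\det(\mi+\mathbf{A})\det\bigl(\mi+(\mi+\mathbf{A})^{-1/2}\mathbf{B}(\mi+\mathbf{A})^{-1/2}\bigr)$ followed by the same Ostrowski argument. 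Neither issue is a gap, only an informality in the stated justification.
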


\begin{proof}
The lower bound is derived in Lemma~\ref{l_ldel}, and the 
upper bounds in Theorem~\ref{t_ld}.~\qed 
\end{proof}
\medskip

Theorem~\ref{t_logdet} implies that $\logdet(\mi_{\ell}+\mt)$ has a small absolute error if 
Algorithm~\ref{alg:randsubspace} applies a sufficient number $q$ of power iterations.
As in Theorem~\ref{t_te},
only a few iterations are required if the eigenvalue gap is large and $\gamma\ll 1$.
The term $\theta_2$ quantifies the contribution of the starting guess $\mom$ in the dominant subspace $\mU_1$. The two alternatives differ by a factor of only $\lambda_k^{-1}$. The second one
is smaller if $\lambda_k>1$.

Theorem~\ref{t_ld2} extends Theorem~\ref{t_logdet} to $\log\det(\ma)$ for positive definite $\ma$.

As before, we start with the derivation of the lower bound, which is the counter part of 
Lemma~\ref{l_tel} and shows that the log
determinant of the restriction can never exceed that of the original matrix.

\begin{lemma}\label{l_ldel}
With the assumptions in Section~\ref{s_ass}, let $\mt=\mq^*\ma\mq$ be computed by 
Algorithm~\ref{alg:randsubspace}. Then
\begin{equation*}
\logdet(\mi_n+\ma)- \log{\det\left(\mi_{\ell}+\mt\right)}\geq 
\log{\det(\mi_{n-k}+\mlam_2)}- \log{\prod_{j=k+1}^{\ell}{(1+\lambda_j)}}\geq 0.
\end{equation*}
\end{lemma}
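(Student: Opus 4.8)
The plan is to mirror the strategy of Lemma~\ref{l_tel}, replacing the additive trace with the multiplicative determinant and the Cauchy interlacing inequalities with the same eigenvalue interlacing, but now tracked through $\det(\mi+\cdot)$. First I would embed $\mt$ into a full unitary conjugation of $\ma$: pick $\mq_\perp\in\complex^{n\times(n-\ell)}$ so that $\hat\mq=\begin{pmatrix}\mq & \mq_\perp\end{pmatrix}$ is unitary, and observe that $\mi_n+\ma$ is unitarily similar to $\mi_n+\hat\mq^*\ma\hat\mq$, whose leading $\ell\times\ell$ principal submatrix is $\mi_\ell+\mt$. Since $\ma\succeq\vzero$, also $\mi_n+\ma\succ\vzero$, so all the relevant matrices are Hermitian positive definite and their determinants are positive; this makes $\logdet$ well-defined throughout and, after taking logs, turns products of eigenvalues into sums.

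The key step is the eigenvalue interlacing already invoked in Lemma~\ref{l_tel}: by the Cauchy interlace theorem, the eigenvalues $\lambda_1(\mt)\geq\cdots\geq\lambda_\ell(\mt)$ of the principal submatrix $\mt$ satisfy $\lambda_{(n-\ell)+j}\leq\lambda_j(\mt)\leq\lambda_j$ for $1\leq j\leq\ell$. Adding $1$ and taking logarithms (monotone), I get $\log(1+\lambda_j(\mt))\leq\log(1+\lambda_j)$ for each $j$, hence
\begin{equation*}
\logdet(\mi_\ell+\mt)=\sum_{j=1}^{\ell}\log(1+\lambda_j(\mt))\leq\sum_{j=1}^{\ell}\log(1+\lambda_j).
\end{equation*}
Now split the right-hand sum into the dominant block $j=1,\dots,k$ and the tail $j=k+1,\dots,\ell$, and write $\sum_{j=1}^{\ell}\log(1+\lambda_j)=\sum_{j=1}^{n}\log(1+\lambda_j)-\sum_{j=\ell+1}^{n}\log(1+\lambda_j)$. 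Recognizing $\sum_{j=1}^{n}\log(1+\lambda_j)=\logdet(\mi_n+\ma)$ and $\sum_{j=k+1}^{n}\log(1+\lambda_j)=\logdet(\mi_{n-k}+\mlam_2)$, I obtain
\begin{equation*}
\logdet(\mi_\ell+\mt)\leq\logdet(\mi_n+\ma)-\logdet(\mi_{n-k}+\mlam_2)+\sum_{j=k+1}^{\ell}\log(1+\lambda_j),
\end{equation*}
which rearranges to the claimed middle inequality (with the convention $\sum_{j=k+1}^{\ell}=0$ when $\ell=k$), since $\sum_{j=k+1}^{\ell}\log(1+\lambda_j)=\log\prod_{j=k+1}^{\ell}(1+\lambda_j)$. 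The final nonnegativity $\logdet(\mi_{n-k}+\mlam_2)-\log\prod_{j=k+1}^{\ell}(1+\lambda_j)\geq 0$ is immediate: the left term equals $\sum_{j=k+1}^{n}\log(1+\lambda_j)$, which dominates the partial sum $\sum_{j=k+1}^{\ell}\log(1+\lambda_j)$ because every omitted term $\log(1+\lambda_j)$ with $j>\ell$ is nonnegative (as $\lambda_j\geq 0$).

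I do not expect a genuine obstacle here; the only points requiring a line of care are: confirming positive definiteness of $\mi_n+\ma$ and $\mi_\ell+\mt$ so that all $\logdet$'s and the logarithms of individual $1+\lambda_j(\mt)$ are legitimate (this follows from $\ma\succeq\vzero$ and $\mt=\mq^*\ma\mq\succeq\vzero$); and handling the edge case $\ell=k$ with the stated empty-product convention. Everything else is bookkeeping with the interlacing inequalities already used in Lemma~\ref{l_tel}.
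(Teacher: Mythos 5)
Your proposal is correct and follows essentially the same route as the paper's proof: embed $\mt$ as a principal submatrix of a unitary conjugation of $\ma$, apply the Cauchy interlace theorem to bound the eigenvalues of $\mt$ by the dominant eigenvalues of $\ma$, and split off the $\lambda_{k+1},\dots,\lambda_\ell$ terms. The only cosmetic difference is that you take logarithms before summing while the paper first bounds $\det(\mi_\ell+\mt)$ as a product and applies the monotone logarithm at the end.
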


\begin{proof}
Choose the unitary matrix $\hat{\mq}$ as in the proof of Lemma~\ref{l_tel},
$$\hat{\mq}^*(\mi_n+\ma)\hat{\mq}
=\begin{pmatrix}\mi_{\ell}+\mt& \ma_{12}\\ \ma_{12}^* & \mi_{n-\ell}+\ma_{22}\end{pmatrix},$$
and proceed likewise with the Cauchy-interlace theorem \cite[Section 10-1]{Par80}  
to conclude
\begin{eqnarray*} 
\det(\mi_{\ell}+\mt)&\leq &\det(\mi_k+\mlam_1)\prod_{j=k+1}^{\ell}{(1+\lambda_j)}\\
&=&\det(\mi_n+\ma)\prod_{j=k+1}^{\ell}{(1+\lambda_{j})}/\det(\mi_{n-k}+\mlam_2)\leq \det(\mi_n+\ma),
\end{eqnarray*}
where for $\ell=k$ we interpret $\prod_{j=k+1}^{\ell}{(1+\lambda_j)}=1$.
The monotonicity of the logarithm implies
\begin{eqnarray*}
\log{\det(\mi_{\ell}+\mt)}&\leq& \log{\det(\mi_n+\ma)}- 
\log{\det(\mi_{n-k}+\mlam_2)}+\log{\prod_{j=k+1}^{\ell}{(1+\lambda_j)}}\\
&\leq &\log{\det(\mi_n+\ma)}.~\qed
\end{eqnarray*}
\end{proof}
\medskip

The following auxiliary result, often called  \textit{Sylvester's determinant identity},
is required for the derivation of the upper bound.

\begin{lemma}[Corollary 2.1 in \cite{Ou81}]\label{l_syl}
If $\mb\in\cmn$ and $\mc\in\complex^{n\times m}$ then
$$\det(\mi_m\pm\mb\mc)=\det(\mi_n\pm\mc\mb).$$
\end{lemma}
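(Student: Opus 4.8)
The plan is to deduce Sylvester's identity from evaluating a single bordered block matrix in two ways; I would first handle the minus sign and then reduce the plus sign to it. Consider the $(m+n)\times(m+n)$ block matrix
\[
\mathbf{M}\equiv\begin{pmatrix}\mi_m & \mb\\ \mc & \mi_n\end{pmatrix}.
\]
The key observation is that $\mathbf{M}$ admits two block-triangular factorizations --- one that ``eliminates'' the $(2,1)$ block and one that ``eliminates'' the $(1,2)$ block --- and that comparing the resulting determinants yields the identity. No serious obstacle arises; the only care needed is the bookkeeping with the sign and making sure the block-triangular determinant rule is applied with square diagonal blocks (of sizes $m$ and $n$), which it is.

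For the first factorization I would verify, by direct block multiplication, that
\[
\mathbf{M}=\begin{pmatrix}\mi_m & \vzero\\ \mc & \mi_n\end{pmatrix}\begin{pmatrix}\mi_m & \mb\\ \vzero & \mi_n-\mc\mb\end{pmatrix}.
\]
Since the first factor is block lower triangular with diagonal blocks $\mi_m,\mi_n$, and the second is block upper triangular with diagonal blocks $\mi_m,\mi_n-\mc\mb$, taking determinants gives $\det\mathbf{M}=\det(\mi_n-\mc\mb)$. For the second factorization I would verify likewise that
\[
\mathbf{M}=\begin{pmatrix}\mi_m & \mb\\ \vzero & \mi_n\end{pmatrix}\begin{pmatrix}\mi_m-\mb\mc & \vzero\\ \mc & \mi_n\end{pmatrix},
\]
which by the same reasoning gives $\det\mathbf{M}=\det(\mi_m-\mb\mc)$. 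Equating the two values of $\det\mathbf{M}$ proves $\det(\mi_m-\mb\mc)=\det(\mi_n-\mc\mb)$.

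Finally, to obtain the ``$+$'' case I would apply the ``$-$'' case with $\mb$ replaced by $-\mb$: this gives $\det(\mi_m+\mb\mc)=\det(\mi_m-(-\mb)\mc)=\det(\mi_n-\mc(-\mb))=\det(\mi_n+\mc\mb)$, completing the proof. Alternatively one could invoke the standard Schur-complement determinant formula in place of exhibiting the two factorizations explicitly, or argue spectrally using the fact that $\mb\mc$ and $\mc\mb$ have the same nonzero eigenvalues with equal algebraic multiplicities; I prefer the factorization argument above because it is fully self-contained and needs only the determinant of a block-triangular matrix.
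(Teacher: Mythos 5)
Your argument is correct, and both block factorizations check out by direct multiplication; the reduction of the ``$+$'' case to the ``$-$'' case via $\mb\mapsto-\mb$ is also fine. Note, however, that the paper does not prove this lemma at all --- it is imported verbatim as Corollary 2.1 of the cited reference \cite{Ou81} --- so there is no in-paper proof to compare against. What you have written is the standard self-contained derivation (equivalently, computing the determinant of $\begin{pmatrix}\mi_m & \mb\\ \mc & \mi_n\end{pmatrix}$ via its two Schur complements), and it would serve perfectly well as a replacement for the citation if one wanted the paper to be self-contained.
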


Next we derive two upper bounds. The second one,  (\ref{e_ld2}), is preferable 
when $\lambda_k>1$ because it reduces the extraneous term.

\begin{theorem}\label{t_ld}
With the assumptions in Section~\ref{s_ass}, 
let $\mt=\mq^*\ma\mq$ be computed by Algorithm~\ref{alg:randsubspace}. Then
\begin{eqnarray}\label{e_ld1}
\lefteqn{\log{\det(\mi_n+\ma)}-\log{\det(\mi_\ell+\mt)}\leq}\nonumber\\
&& \qquad\qquad
 \log{\det\left(\mi_{n-k}+\mlam_2\right)}+ \log{\det\left(\mi_{n-k}+\gamma^{2q-1} \,
\normtwo{\mom_2\mom_1^\dagger}^2\>\mlam_2\right)}.
\end{eqnarray}
If $\lambda_k>1$ then the following bound is tighter
\begin{eqnarray}\label{e_ld2}
\lefteqn{\log{\det(\mi_n+\ma)}-\log{\det(\mi_\ell+\mt)}\leq}\nonumber\\
&& \qquad\qquad
 \log{\det\left(\mi_{n-k}+\mlam_2\right)}+ \log{\det\left(\mi_{n-k}+\tfrac{\gamma^{2q-1}}{\lambda_k} \>
\normtwo{\mom_2\mom_1^\dagger}^2\>\mlam_2\right)}.
\end{eqnarray}
\end{theorem}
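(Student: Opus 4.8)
The plan is to follow, step by step, the template of the proof of Theorem~\ref{t_te}, with traces replaced by log-determinants and the von Neumann trace inequality replaced by Sylvester's identity (Lemma~\ref{l_syl}) together with monotonicity of $\logdet$ on positive semi-definite matrices.

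First I would shrink the working space from $\ell$ to $k$ dimensions using the very same $\mw\in\complex^{\ell\times k}$ from Steps~1--3 of that proof, for which $\mq\mw$ has the block form of~(\ref{e_qw}) with $\mf=\mlam_2^q\,\mz\,\mlam_1^{-q}$ and $\mz\equiv\mom_2\mom_1^\dagger$. Since $(\mq\mw)^*\ma(\mq\mw)=\mw^*\mt\mw$ is a compression of the positive semi-definite matrix $\mt$ by a matrix with orthonormal columns, Cauchy interlacing and positivity of the eigenvalues of $\mt$ give $\det(\mi_\ell+\mt)\ge\det(\mi_k+\mw^*\mt\mw)$ (exactly as in the proof of Lemma~\ref{l_ldel}), so it suffices to bound $\logdet(\mi_n+\ma)-\logdet\bigl(\mi_k+(\mq\mw)^*\ma(\mq\mw)\bigr)$ from above. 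Using $\mU^*\ma\mU=\mlam$ and the form of $\mq\mw$, a direct computation yields
\[
\mi_k+(\mq\mw)^*\ma(\mq\mw)=(\mi_k+\mf^*\mf)^{-1/2}\Bigl[(\mi_k+\mlam_1)+\mf^*(\mi_{n-k}+\mlam_2)\mf\Bigr](\mi_k+\mf^*\mf)^{-1/2}.
\]
Taking determinants, splitting $\logdet(\mi_n+\ma)=\logdet(\mi_k+\mlam_1)+\logdet(\mi_{n-k}+\mlam_2)$, and using $\mf^*(\mi_{n-k}+\mlam_2)\mf\succeq\mf^*\mf$ with monotonicity of $\logdet$, I reach the upper bound $\logdet(\mi_{n-k}+\mlam_2)+\logdet(\mi_k+\mf^*\mf)+\logdet(\mi_k+\mlam_1)-\logdet\bigl((\mi_k+\mlam_1)+\mf^*\mf\bigr)$.

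The core of the argument is to show the last three terms are bounded by $\logdet\bigl(\mi_{n-k}+\gamma^{2q-1}\normtwo{\mz}^2\mlam_2\bigr)$. I would factor $(\mi_k+\mlam_1)+\mf^*\mf=(\mi_k+\mlam_1)^{1/2}\bigl(\mi_k+(\mi_k+\mlam_1)^{-1/2}\mf^*\mf(\mi_k+\mlam_1)^{-1/2}\bigr)(\mi_k+\mlam_1)^{1/2}$, cancel $\logdet(\mi_k+\mlam_1)$, and apply Sylvester's identity to both $\logdet(\mi_k+\mf^*\mf)$ and $\logdet\bigl(\mi_k+(\mi_k+\mlam_1)^{-1/2}\mf^*\mf(\mi_k+\mlam_1)^{-1/2}\bigr)$, rewriting the three terms as $\logdet(\mi_{n-k}+\mf\mf^*)-\logdet\bigl(\mi_{n-k}+\mf(\mi_k+\mlam_1)^{-1}\mf^*\bigr)$. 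Then I would invoke the identity $\mf\mf^*-\mf(\mi_k+\mlam_1)^{-1}\mf^*=\mf\,\mlam_1(\mi_k+\mlam_1)^{-1}\mf^*=\mlam_2^q\,\mz\,\mlam_1^{1-2q}(\mi_k+\mlam_1)^{-1}\mz^*\,\mlam_2^q\equiv\mathbf{D}\succeq\mathbf{0}$ (the middle step uses that $\mlam_1$ and $(\mi_k+\mlam_1)^{-1}$ commute), together with the elementary fact that $\det(\mathbf{B}+\mathbf{D})\le\det(\mathbf{B})\,\det(\mi+\mathbf{D})$ whenever $\mathbf{B}\succeq\mi$ and $\mathbf{D}\succeq\mathbf{0}$ — which follows from $\det(\mathbf{B}+\mathbf{D})/\det(\mathbf{B})=\det(\mi+\mathbf{B}^{-1}\mathbf{D})=\det\bigl(\mi+\mathbf{D}^{1/2}\mathbf{B}^{-1}\mathbf{D}^{1/2}\bigr)\le\det(\mi+\mathbf{D})$, using $\mathbf{D}^{1/2}\mathbf{B}^{-1}\mathbf{D}^{1/2}\preceq\mathbf{D}$ and monotonicity. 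Applying this with $\mathbf{B}=\mi_{n-k}+\mf(\mi_k+\mlam_1)^{-1}\mf^*$ gives $\logdet(\mi_{n-k}+\mf\mf^*)-\logdet\bigl(\mi_{n-k}+\mf(\mi_k+\mlam_1)^{-1}\mf^*\bigr)\le\logdet(\mi_{n-k}+\mathbf{D})$, hence $\logdet(\mi_n+\ma)-\logdet(\mi_\ell+\mt)\le\logdet(\mi_{n-k}+\mlam_2)+\logdet(\mi_{n-k}+\mathbf{D})$.

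It then remains to bound $\mathbf{D}$ in the Loewner order. For~(\ref{e_ld1}) I would use, for each $j\le k$ (so $1-2q<0$ and $\lambda_j\ge\lambda_k>0$), the entrywise estimate $\lambda_j^{1-2q}/(1+\lambda_j)\le\lambda_j^{1-2q}\le\lambda_k^{1-2q}$, giving $\mlam_1^{1-2q}(\mi_k+\mlam_1)^{-1}\preceq\lambda_k^{1-2q}\mi_k$; combined with $\mz\mz^*\preceq\normtwo{\mz}^2\mi_{n-k}$ and $\mlam_2^{2q}\preceq\lambda_{k+1}^{2q-1}\mlam_2$ this yields $\mathbf{D}\preceq\lambda_k^{1-2q}\lambda_{k+1}^{2q-1}\normtwo{\mz}^2\mlam_2=\gamma^{2q-1}\normtwo{\mz}^2\mlam_2$. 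For~(\ref{e_ld2}), when $\lambda_k>1$, I would instead use $\lambda_j^{1-2q}/(1+\lambda_j)\le\lambda_j^{-2q}\le\lambda_k^{-2q}$, which replaces the prefactor by $\gamma^{2q-1}/\lambda_k$; monotonicity of $\logdet(\mi_{n-k}+\cdot\,)$ then delivers the two claimed bounds. I expect the hard part to be this third step: carrying the determinant bookkeeping cleanly through the two uses of Sylvester's identity, spotting the identity for $\mf\mf^*-\mf(\mi_k+\mlam_1)^{-1}\mf^*$, and proving the auxiliary inequality $\det(\mathbf{B}+\mathbf{D})\le\det(\mathbf{B})\det(\mi+\mathbf{D})$; the surrounding eigenvalue estimates parallel Steps~5--6 of the proof of Theorem~\ref{t_te}.
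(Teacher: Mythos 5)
Your proposal is correct, and while it shares the paper's skeleton — the same shrinking to $k$ dimensions via the same $\mw$, the same $\mf=\mlam_2^q\,\mz\,\mlam_1^{-q}$, the same isolation of the unavoidable term $\logdet(\mi_{n-k}+\mlam_2)$, and the same initial Loewner drop of $\mf^*\mlam_2\mf$ — the core of your bound on the residual error is genuinely different. The paper runs two separate derivations: for (\ref{e_ld1}) it passes through a chain of congruences ($\mh_1,\mh_2,\mh_3$), invokes Ostrowski's theorem to reach $f(\mi_k+\mlam_1^{1/2}\mf^*\mf\mlam_1^{1/2})$, and finishes with Sylvester's identity plus singular value product inequalities; for (\ref{e_ld2}) it uses a different Loewner comparison to reduce to $f(\mi_k+\mf^*\mf)$ and repeats the singular value argument with $\mg_2=\mg_1\mlam_1^{-1/2}$. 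You instead keep $\mi_k+\mh$ exact, apply Sylvester's identity twice to land on $\logdet(\mi_{n-k}+\mf\mf^*)-\logdet(\mi_{n-k}+\mf(\mi_k+\mlam_1)^{-1}\mf^*)$, and control this with the self-contained inequality $\det(\mathbf{B}+\mathbf{D})\le\det(\mathbf{B})\det(\mi+\mathbf{D})$ for $\mathbf{B}\succeq\mi$, $\mathbf{D}\succeq\mathbf{0}$, reducing everything to a Loewner bound on the single diagonal matrix $\mlam_1^{1-2q}(\mi_k+\mlam_1)^{-1}$. This buys you a unified treatment: the two claimed bounds drop out of the two elementary scalar estimates $\lambda_j^{1-2q}/(1+\lambda_j)\le\lambda_k^{1-2q}$ and $\le\lambda_k^{-2q}$, with no need for Ostrowski's theorem or the singular value machinery, and it makes transparent that both bounds hold unconditionally ($\lambda_k>1$ only governs which is tighter — a fact also true of, but less visible in, the paper's version). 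The paper's route, by contrast, stays closer to standard Horn--Johnson toolbox results and reuses the trace-estimator argument almost verbatim. All the individual steps you flag as delicate (the identity $\mf\mf^*-\mf(\mi_k+\mlam_1)^{-1}\mf^*=\mf\,\mlam_1(\mi_k+\mlam_1)^{-1}\mf^*$, the auxiliary determinant inequality via $\mathbf{D}^{1/2}\mathbf{B}^{-1}\mathbf{D}^{1/2}\preceq\mathbf{D}$, and the final diagonal estimates) check out.
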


\begin{proof}
The structure of the proof is analogous to that of Theorem~\ref{t_te},
and the first three steps are the same for (\ref{e_ld1}) and (\ref{e_ld2}).
Abbreviate $f(\cdot)\equiv \log{\det(\cdot)}$.

\paragraph{1. Shrinking the space}
Lemma~\ref{l_ldel} implies
$$f(\mi_n+\ma)-f(\mi_{\ell}+\mq^*\ma\mq)\leq   f(\mi_n+\ma) -f(\mi_k+\mh),$$
where
$$\mh\equiv \mw^*\mt\mw=\mw^*\,\mq^*\ma\mq\,\mw=
(\mU^*\mq\mw)^*\,\begin{pmatrix}\mlam_1 & \\ & \mlam_2\end{pmatrix}\,(\mU^*\mq\mw).$$
The upper bound for the absolute error equals
\begin{eqnarray*}
f(\mi_n+\ma) -f(\mi_k+\mh) & = &f(\mi_k+\mlam_1)+f(\mi_{n-k}+\mlam_2)-f(\mi_k+\mh)\\
&=&f(\mi_{n-k}+\mlam_2) +\mathcal{E}.
\end{eqnarray*}
Since nothing can be done about $f(\mi_{n-k}+\mlam_2)$, it suffices to bound
\begin{eqnarray}\label{e_calE}
\mathcal{E}\equiv f(\mi_k+\mlam_1) -f(\mi_k+\mh).
\end{eqnarray}

\paragraph{2. Exploiting the structure of $\mq$ and choosing $\mw$}
To simplify the expression for $\mh$,
we exploit the structure of $\mq$ and choose $\mw$ as in the proof of Theorem~\ref{t_te}. 
From (\ref{e_qw}) follows
$$\mU^*\mq\mw=\begin{pmatrix}\mi_k\\ \mf\end{pmatrix}(\mi_k+\mf^*\mf)^{-1/2}, \qquad
\text{where}\quad
\mf\equiv \mlam_2^q\,\mom_2\mom_1^{\dagger}\, \mlam_1^{-q}.$$
Substituting this into the eigendecomposition of $\mh$ gives 
\begin{eqnarray*}
\mh=(\mi_k+\mf^*\mf)^{-1/2}\,\left(\mlam_1+\mf^*\mlam_2\mf\right)\,(\mi_k+\mf^*\mf)^{-1/2}.
\end{eqnarray*}

\paragraph{3. Lower bound for $f(\mi_k+\mh)$}
We use the Loewner partial order \cite[Definition 7.7.1]{HoJ13} to represent
positive semi-definiteness, $\mf^*\mlam_2\mf\succeq\vzero$, which implies
\begin{eqnarray}\label{e_H}
\mh \>\succeq \>\mh_1\equiv (\mi_k+\mf^*\mf)^{-1/2}\,\mlam_1 \,(\mi_k+\mf^*\mf)^{-1/2}.
\end{eqnarray}
The properties of the Loewner partial order \cite[Corollary 7.7.4]{HoJ13} imply
$$f(\mi_k+\mh) \geq f(\mi_k+\mh_1).$$
We first derive (\ref{e_ld1}) and then  (\ref{e_ld2}).

\paragraph{Derivation of (\ref{e_ld1}) in Steps 4a--6a}
\begin{description}
\item\textit{4a. Sylvester's determinant identity.}
Applying Lemma~\ref{l_syl} to $\mh_1$ in (\ref{e_H}) gives
$$f(\mi_k+\mh_1)=f(\mi_k+\mh_2) \qquad \text{where} \quad
\mh_2\equiv\mlam^{1/2}_1(\mi_k+\mf^*\mf)^{-1}\mlam_1^{1/2}.$$

\item\textit{5a. Upper bound for $\mathcal{E}$ in (\ref{e_calE}).}
Steps 3 and~4a imply
$$\mathcal{E}\leq f(\mi_k+\mlam_1) - f(\mi_k+\mh_2) =f(\mh_3),$$
where
\begin{eqnarray*}
\mh_3&\equiv& (\mi_k+\mh_2)^{-1/2}(\mi_k+\mlam_1)(\mi_k+\mh_2)^{-1/2}\\
&=&(\mi_k+\mh_2)^{-1}+ (\mi_k+\mh_2)^{-1/2}\mlam_1(\mi_k+\mh_2)^{-1/2}.
\end{eqnarray*}
Expanding the first summand into
$(\mi_k+\mh_2)^{-1}=\mi-(\mi_k+\mh_2)^{-1/2}\mh_2(\mi_k+\mh_2)^{-1/2}$ gives
$$\mh_3=\mi_k+(\mi_k+\mh_2)^{-1/2}(\mlam_1-\mh_2)(\mi_k+\mh_2)^{-1/2}.$$
Since $\mi_k-(\mi_k+\mf^*\mf)^{-1} \preceq \mf^*\mf$, the center term can be bounded by
$$\mlam_1 -\mh_2  = \mlam_1^{1/2}\,\left(\mi_k-(\mi_k+\mf^*\mf)^{-1}\right)\,\mlam_1^{1/2} 
\>\preceq\> \mk\equiv \mlam_1^{1/2} \mf^*\mf\mlam_1^{1/2}.$$
Because the singular values of $(\mi_k+\mh_2)^{-1/2}$ are less than $1$, 
Ostrowski's Theorem  \cite[Theorem 4.5.9]{HoJ13} implies 
$$\mh_3\>\preceq\>\mi_k+(\mi_k+\mh_2)^{-1/2}\mk(\mi_k+\mh_2)^{-1/2} \preceq  \mi_k+\mk.$$
Thus $\mathcal{E} \leq  f(\mh_3) \> \leq\>  f(\mi_k + \mk)$.
\smallskip

\item\textit{6a. Bounding $f(\mi_k+\mk)$.}
Abbreviate $\mg_1 \equiv \mlam_2^{q-1/2}\mom_2\mom_1^{\dagger}\mlam_1^{-q+1/2}$
and expand $\mk$,
$$\mi_k+\mk=\mi_k+\mlam_1^{1/2}\mf^*\mf\mlam_1^{1/2} = \mi_k+\mg_1^*\mlam_2\mg_1.$$
Applying  Lemma~\ref{l_syl},
$$\det(\mi_k+\mg_1^*\mlam_2\mg_1) =\det(\mi_{n-k}+\mg_1\mg_1^*\mlam_2).$$

The fact that the absolute value of a determinant is the product of the singular values, and the inequalities for sums of singular values \cite[Theorem 3.3.16]{HoJ91} implies
\begin{eqnarray*}
\det(\mi_{n-k}+\mg_1\mg_1^*\mlam_2)& \leq& \prod_{j=1}^{n-k} \sigma_j(\mi_{n-k} + \mg_1\mg_1^*\mlam_2)\\
&\leq& \prod_{j=1}^{n-k} (1+ \sigma_j(\mg_1\mg_1^*\mlam_2)).
\end{eqnarray*}
Observe that  $\|\mg_1\|_2\leq \gamma^{q-1/2}\, \|\mom_2\mom_1^{\dagger}\|_2$ and apply the singular value product inequalities~\cite[Theorem 3.3.16(d)]{HoJ91} 

$$\sigma_j(\mg_1\mg_1^*\mlam_2) \leq \sigma_1(\mg_1\mg_1^*)\sigma_j(\mlam_2) \leq {\gamma^{2q-1}}\>\normtwo{\mom_2\mom_1^\dagger}^2\>\lambda_{k+j},
\qquad 1\leq j\leq n-k,$$
and therefore
 $$\det(\mi_{n-k} + \mg_1\mg_1^*\mlam_2) \leq  
\det\left(\mi_{n-k} +\gamma^{2q-1}\>
\normtwo{\mom_2\mom_1^\dagger}^2\> \mlam_2\right).$$
Thus 
$$\mathcal{E}\leq f(\mi_k+\mk) \leq 
f\left(\mi_{n-k} + \gamma^{2q-1}\>\normtwo{\mom_2\mom_1^\dagger}^2\>
\mlam_2\right).$$ 
\end{description}

\paragraph{Derivation of (\ref{e_ld2}) in Steps 4b--5b}
\begin{description}
\item\textit{4b.  Upper bound for $\mathcal{E}$ in (\ref{e_calE}).}
For the matrix $\mh_1$ in (\ref{e_H}) write
\begin{eqnarray*}
\mi_k+\mh_1& = &
(\mi_k+\mf^*\mf)^{-1/2}\,\left((\mi_k+\mf^*\mf)+\mlam_1\right)\,(\mi_k+\mf^*\mf)^{-1/2}\\
&\succeq&\> \mh_4\equiv
(\mi_k+\mf^*\mf)^{-1/2}\,\left(\mi_k+\mlam_1\right)\,(\mi_k+\mf^*\mf)^{-1/2}.
\end{eqnarray*}
Thus $f(\mh_4)=f(\mi_k+\mlam_1)-f(\mi_k+\mf^*\mf)$.
The properties of the Loewner partial order \cite[Corollary 7.7.4]{HoJ13} imply
$$\mathcal{E}\leq f(\mi_k+\mlam_1)-f(\mh_4)\leq f(\mi_k+\mf^*\mf).$$

\item\textit{5b. Bounding $f(\mi_k+\mf^*\mf)$.}
Write 
$$\mi_k+\mf^*\mf = \mi_k+\mg_2^*\mlam_2\mg_2 \qquad \text{where} \quad 
\mg_2 \equiv \mlam_2^{q-1/2}\mom_2\mom_1^{\dagger}\mlam_1^{-q}.$$ Observe that,
$\mg_2 = \mg_1\mlam_1^{-1/2}$ and therefore, $\|\mg_2\|_2\leq \tfrac{\gamma^{q-1/2}}{\sqrt{\lambda_k}}\, \|\mom_2\mom_1^{\dagger}\|_2$. 
The rest of the proof follows the same steps as Step~6a and will not be repeated here.~\qed
\end{description}
\end{proof}

The following discussion mirrors that in Remark~\ref{r_te}.

\begin{remark}\label{r_ld}
The two special cases below illustrate that,
even in a best-case scenario, the accuracy of $\logdet(\mi_{\ell}+\mt)$ is limited by 
$\logdet(\mi_{n-k}+\mlam_2)$.

\begin{itemize}
\item If $\ell=k$ and $\mom=\mU_1$ then
$$\logdet(\mi_n+\ma) - \logdet(\mi_{\ell}+\mt) = \logdet(\mi_{n-k}+\mlam_2).$$
This follows from Lemma~\ref{l_ldel}, and from both bounds in Theorem~\ref{t_ld}
with $\mom_1=\mi_k$ and $\mom_2=\vzero$.
 \item If $\ell>k$ and $\mom$ consists of the columns of $\mU$ associated
with the dominant eigenvalues $\lambda_1, \ldots,\lambda_{\ell}$ of $\ma$, then
\begin{eqnarray*}
\logdet(\mi_{n-k}+\mlam_2) - \log{\prod_{j=k+1}^{\ell}{(1+\lambda_j)}}&\leq& 
\logdet(\mi_n+\ma) - \logdet(\mi_{\ell}+\mt)\\
& \leq&  \logdet(\mi_{n-k}+\mlam_2).
\end{eqnarray*}
This follows from Lemma~\ref{l_ldel}, and from both bounds in Theorem~\ref{t_ld}
with $\mom_1=\begin{pmatrix}\mi_k & \vzero_{k\times (k-\ell)}\end{pmatrix}$ and
$\mom_2=\begin{pmatrix} \vzero_{(n-k)\times k} & *\end{pmatrix}$.

Theorem~\ref{t_ld} cannot be tight for the same reasons as in Remark~\ref{r_te}.
\end{itemize}
\end{remark}
\medskip

The proof for the following bounds is very similar to that of Lemma~\ref{l_ldel}
and Theorem~\ref{t_ld}.

\begin{theorem}\label{t_ld2}
In addition to the assumptions in Section~\ref{s_ass}, let $\ma$ be positive definite; and
let $\mt=\mq^*\ma\mq$ be computed by Algorithm~\ref{alg:randsubspace}. Then
\begin{eqnarray*}
0\leq \lefteqn{\log{(\det\ma)}-\log{\det(\mt)}\leq}\\
&& \qquad\qquad
 \log{\det\left(\mlam_2\right)}+ \log{\det\left(\mi_{n-k}+\frac{\gamma^{2q-1}}{\lambda_k} \,
\normtwo{\mom_2\mom_1^\dagger}^2\>\mlam_2\right)}.
\end{eqnarray*}
\end{theorem}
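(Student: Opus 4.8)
The plan is to transcribe the proofs of Lemma~\ref{l_ldel} and Theorem~\ref{t_ld}, but with the map $f(\cdot)\equiv\log\det(\cdot)$ applied directly to $\ma$ and $\mt$ instead of to $\mi_n+\ma$ and $\mi_\ell+\mt$. Since $\det\ma=\det\mlam_1\cdot\det\mlam_2$, once the problem is reduced to a $k\times k$ compression of $\mt$ it will remain only to estimate $\log\det\mlam_1-\log\det\mh$ for the appropriate $\mh$. The bonus is that, with no additive identity present, the argument is actually \emph{shorter} than that of Theorem~\ref{t_ld}: Step~4b collapses.

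For the \emph{lower bound} I would argue as in Lemma~\ref{l_ldel}: complete $\mq$ to a unitary $\hat{\mq}$, note that $\hat{\mq}^*\ma\hat{\mq}$ and $\ma$ share the spectrum $\lambda_1\ge\cdots\ge\lambda_n>0$, and apply the Cauchy interlace theorem to $\mt=\mq^*\ma\mq$ to get $\lambda_j(\mt)\le\lambda_j$ for $1\le j\le\ell$; hence $\det\mt\le\prod_{j=1}^\ell\lambda_j=\det\mlam_1\prod_{j=k+1}^\ell\lambda_j\le\det\mlam_1\cdot\det\mlam_2=\det\ma$, and taking logarithms gives the left inequality. The step that needs care here---and the one I expect to be the real obstacle---is the passage $\prod_{j=k+1}^\ell\lambda_j\le\det\mlam_2$ (equivalently $\prod_{j=\ell+1}^n\lambda_j\ge1$), together with the companion fact $\log\det\mlam_2\ge0$ that is needed for the stated right-hand side to be nonnegative: neither follows from $\ma\succ0$ by itself, but both hold as soon as the sub-dominant eigenvalues are at least one, e.g.\ when $\ma\succeq\mi_n$. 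I would therefore either add this hypothesis explicitly or state the lower bound only in that regime.

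For the \emph{upper bound} I would reuse Steps~1--3 of the proof of Theorem~\ref{t_ld} verbatim: with the same $\mw$ and $\mf\equiv\mlam_2^q\mom_2\mom_1^{\dagger}\mlam_1^{-q}$, the $k\times k$ compression $\mh\equiv\mw^*\mt\mw$ satisfies $\mh\succeq\mh_1\equiv(\mi_k+\mf^*\mf)^{-1/2}\mlam_1(\mi_k+\mf^*\mf)^{-1/2}$, and shrinking from $\ell$ to $k$ dimensions (the analogue of Lemma~\ref{l_ldel}) yields $\log\det\ma-\log\det\mt\le\log\det\mlam_2+\mathcal{E}$ with $\mathcal{E}\equiv\log\det\mlam_1-\log\det\mh$. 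Monotonicity of $\log\det$ under the Loewner order then gives $\log\det\mh\ge\log\det\mh_1=\log\det\mlam_1-\log\det(\mi_k+\mf^*\mf)$, so $\mathcal{E}\le\log\det(\mi_k+\mf^*\mf)$; this is the point at which the absence of the extra $\mi_k$ shortens the argument relative to Theorem~\ref{t_ld}.

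Finally I would finish exactly as in Steps~5b and~6a of Theorem~\ref{t_ld}: write $\mi_k+\mf^*\mf=\mi_k+\mg_2^*\mlam_2\mg_2$ with $\mg_2\equiv\mlam_2^{q-1/2}\mom_2\mom_1^{\dagger}\mlam_1^{-q}$, so that $\normtwo{\mg_2}\le(\gamma^{q-1/2}/\sqrt{\lambda_k})\normtwo{\mom_2\mom_1^{\dagger}}$; apply Sylvester's identity (Lemma~\ref{l_syl}) to obtain $\det(\mi_k+\mg_2^*\mlam_2\mg_2)=\det(\mi_{n-k}+\mg_2\mg_2^*\mlam_2)$; and use the singular value product inequalities, $\sigma_j(\mg_2\mg_2^*\mlam_2)\le\normtwo{\mg_2}^2\lambda_{k+j}\le(\gamma^{2q-1}/\lambda_k)\normtwo{\mom_2\mom_1^{\dagger}}^2\lambda_{k+j}$, to conclude $\det(\mi_{n-k}+\mg_2\mg_2^*\mlam_2)\le\det(\mi_{n-k}+(\gamma^{2q-1}/\lambda_k)\normtwo{\mom_2\mom_1^{\dagger}}^2\mlam_2)$. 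Chaining these estimates produces the claimed upper bound; apart from the sign condition flagged for the lower bound, every step is a routine re-derivation of material already in Lemma~\ref{l_ldel}, Lemma~\ref{l_syl}, and Theorem~\ref{t_ld}.
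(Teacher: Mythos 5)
Your proposal follows exactly the route the paper intends: the paper gives no proof of Theorem~\ref{t_ld2} beyond pointing to Lemma~\ref{l_ldel} and Theorem~\ref{t_ld}, and your adaptation (interlacing for the lower bound; Steps 1--3 of Theorem~\ref{t_ld} plus the collapsed Step 4b, giving $\mathcal{E}\leq\logdet(\mi_k+\mf^*\mf)$ directly from $\det\mh_1=\det\mlam_1/\det(\mi_k+\mf^*\mf)$; then Steps 5b/6a) is correct. Your caveat about the lower bound is not merely a caution but a genuine defect of the theorem as stated: for $\ma=\diag(2,\tfrac12)$ with $k=\ell=1$, $\mom=\ve_1$ one gets $\mt=2$, so $\logdet(\ma)-\logdet(\mt)=-\log 2<0$, while the stated upper bound ($\logdet(\mlam_2)=-\log 2$) still holds with equality; positive definiteness alone does not give nonnegativity, and $\ma\succeq\mi_n$ (or $\lambda_n\geq 1$) is the natural repair. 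One small addendum: the same caveat also infects the \emph{upper} bound when $\ell>k$, since the shrinking step $\logdet(\mt)\geq\logdet(\mh)$ is precisely the ``analogue of Lemma~\ref{l_ldel}'' applied to $\mt$ and needs $\lambda_j(\mt)\geq 1$ for $k<j\leq\ell$ (automatic from interlacing once $\ma\succeq\mi_n$, and vacuous when $\ell=k$ because $\mw$ is then unitary); with that hypothesis added, every step you give goes through.
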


\section{Probabilistic analysis}\label{s_prob}
We derive probabilistic bounds for $\normtwo{\mom_2\mom_1^{\dagger}}$, a term that
represents the contribution of the starting guess $\mom$ in the dominant 
eigenspace $\mU_1$ of $\ma$, when the elements of $\mom$ are either Gaussian random variables
(Section~\ref{s_probg}) or Rademacher random variables (Section~\ref{s_probr}).

The theory for  Gaussian random matrices suggests the value $p \lesssim 20$ whereas theory for Rademacher random matrices suggests that $\ell \sim (k+\log n)\log k$ samples need to be taken to ensure 
$\rank(\mom_1) = k$. However, the theory for Rademacher random matrices is pessimistic, and numerical experiments demonstrate that a practical value of $p\lesssim 20$ is sufficient. 

\subsection{Gaussian random matrices}\label{s_probg}
For the Gaussian starting guess, we present bounds for expectation

We split our analysis into two parts: an average case analysis (Section~\ref{ss_g_exp}) and a 
concentration inequality (Section~\ref{ss_g_exp}), and prove Theorem~\ref{t_gauss_exp}.

\begin{definition}
A ``standard'' Gaussian matrix has elements that are independently and identically distributed
 random $\mathcal{N}(0,1)$ variables, that is normal random variables with mean~0 and variance~1.
 \end{definition}
 
 Appendix~\ref{app_gauss} summarizes the required results for standard Gaussian matrices.
 In particular, we will need the following.
 
\begin{remark}\label{r_g}
The distribution of a standard Gaussian matrix $\mg$ is rotationally invariant.
That is, if $\mU$ and $\mv$ are unitary matrices, 
then $\mU^*\mg\mv$ has the same distribution as $\mg$. Due to this property, the contribution of the starting guess on the dominant eigenspace does not appear in the bounds below.
\end{remark}

\subsubsection{Average case analysis} \label{ss_g_exp}
We present bounds for the expected values of $\normtwo{\mg}^2$ and $\normtwo{\mg^\dagger}^2$ 
for standard Gaussian matrices $\mg$, and then prove Theorem~\ref{t_gauss_exp}.

\begin{lemma}\label{p_2norm2}
Draw two Gaussian random matrices  $\mg_2\in\mathbb{R}^{(n-k) \times (k+p)} $ and $\mg_1\in\mathbb{R}^{k \times (k+p)}$ and let $p\geq 2$. With $\mu$ defined in~\eqref{e_mu}, then 
\begin{equation}\label{2norm2}
\expect{ \normtwo{\mg_2}^2 }  \leq \>{\mu}^2 + 2\left(\sqrt{\frac{\pi}{2}}\,{\mu} +1 \right).
\end{equation}
If, in addition, also $k \geq 2$, then
\begin{equation}\label{pinv2norm2}
\expect{\normtwo{\mg_1^\dagger}^2 }  \leq \> \frac{p+1}{p-1}\left(\frac{1}{2\pi\,(p+1)}\right)^{2/(p+1)}\left(\frac{e\,\sqrt{k+p}}{p+1}\right)^2. 
\end{equation}
\end{lemma}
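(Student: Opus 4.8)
The plan is to bound each expectation separately using the standard tail and moment results for Gaussian matrices collected in Appendix~\ref{app_gauss}. For \eqref{2norm2}, I would start from the classical concentration inequality for the spectral norm of a rectangular Gaussian matrix: if $\mg_2\in\mathbb{R}^{(n-k)\times(k+p)}$, then $\normtwo{\mg_2}\leq \mu + t$ with probability at least $1 - e^{-t^2/2}$, where $\mu=\sqrt{n-k}+\sqrt{k+p}$ (this is Proposition~A.1-type material). To turn this into a bound on $\expect{\normtwo{\mg_2}^2}$, I would write $\expect{\normtwo{\mg_2}^2} = \int_0^\infty \prob{\normtwo{\mg_2}^2 > s}\,ds$, substitute $s=(\mu+t)^2$, and split the integral at $t=0$: the contribution from $s\le\mu^2$ is at most $\mu^2$, and the contribution from the tail is $\int_0^\infty 2(\mu+t)\,e^{-t^2/2}\,dt = 2\mu\int_0^\infty e^{-t^2/2}\,dt + 2\int_0^\infty t\,e^{-t^2/2}\,dt = 2\mu\sqrt{\pi/2} + 2$. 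Summing gives exactly $\mu^2 + 2(\sqrt{\pi/2}\,\mu + 1)$, as claimed.

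For \eqref{pinv2norm2}, the object is $\normtwo{\mg_1^\dagger}$ where $\mg_1\in\mathbb{R}^{k\times(k+p)}$ is a fat Gaussian matrix (more columns than rows, $p\ge 2$), so $\mg_1^\dagger$ is the right inverse and $\normtwo{\mg_1^\dagger} = 1/\sigma_{\min}(\mg_1)$. Here I would invoke the exact formula for negative moments of the smallest singular value of a Gaussian matrix from Appendix~\ref{app_gauss} (the Chen--Dongarra / Halko--Martinsson--Tropp Proposition~A.4 result): for $p\ge 2$, $\expect{\normtwo{\mg_1^\dagger}^2}$ has a closed form, or at least the bound $\expect{\normtwo{\mg_1^\dagger}^2}\le \frac{k+p}{p-1}\cdot\frac{1}{?}$. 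The cleaner route is to use the density of $1/\sigma_{\min}^2$ together with a Stirling-type estimate; the factor $\left(\frac{1}{2\pi(p+1)}\right)^{2/(p+1)}$ strongly suggests that the proof passes through a Gamma-function ratio $\Gamma(\cdot)/\Gamma(\cdot)$ which is then bounded below using Stirling's inequality $\Gamma(x+1)\ge \sqrt{2\pi x}\,(x/e)^x$, producing the $(e/(p+1))^2$ and $(k+p)$ factors. So the step is: (i) write $\expect{\normtwo{\mg_1^\dagger}^2}$ as an explicit ratio of Gamma functions via the known moment formula, (ii) apply Stirling to each Gamma factor, (iii) collect terms to match the stated bound, using $k\ge 2$ to control a leftover factor.

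I would structure the write-up as two short blocks, one per inequality, each citing the relevant appendix lemma at the outset and then doing the elementary estimate. The main obstacle is the second bound: getting the constants to land exactly as stated requires care with the Gamma-function manipulation and with which Stirling-type inequality is applied in which direction (one needs a lower bound on a Gamma in the denominator, hence an upper bound on the whole expression). The $p\ge2$ hypothesis is exactly what makes the moment $\expect{1/\sigma_{\min}^2}$ finite (the density near $0$ behaves like $\sigma^{p}$, integrable against $\sigma^{-2}$ iff $p>1$), and $k\ge 2$ is needed to bound an auxiliary ratio like $\frac{\Gamma(k+p)}{\Gamma(k+p-1)\Gamma(\cdot)}$ cleanly — I would flag both hypotheses at the points where they are used. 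The first bound, by contrast, is routine once the integral-of-tail identity is set up, so I expect little difficulty there.
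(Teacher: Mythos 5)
Your treatment of \eqref{2norm2} is exactly the paper's: the sub-Gaussian tail bound $\prob{\normtwo{\mg_2}>\mu+t}\le e^{-t^2/2}$, the identity $\expect{\normtwo{\mg_2}^2}=\int_0^\infty 2t\,\prob{\normtwo{\mg_2}>t}\,dt$, a split at $t=\mu$, and the substitution $t=\mu+u$ giving $2\mu\sqrt{\pi/2}+2$ for the tail. No issues there.

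For \eqref{pinv2norm2} there is a genuine gap. Your plan rests on an ``exact formula for negative moments of $\sigma_{\min}$'' that you do not actually produce (your own write-up leaves a ``$?$'' in the bound), and your reverse-engineering of the constant is wrong: the factor $\bigl(\tfrac{1}{2\pi(p+1)}\bigr)^{2/(p+1)}$ is not a Stirling artifact from a Gamma-function ratio. The paper instead takes the large-deviation bound for the pseudo-inverse (Lemma~\ref{l_gauss_pinv}, i.e.\ HMT Proposition~10.4), rewrites it as $\prob{\normtwo{\mg_1^\dagger}\ge t}\le D\,t^{-(p+1)}$, and applies the \emph{same} tail-integration device as in the first part, splitting at a free threshold $\beta$:
\begin{equation*}
\expect{\normtwo{\mg_1^\dagger}^2}\;\le\;\beta^2+\frac{2D}{p-1}\,\beta^{1-p}.
\end{equation*}
Optimizing over $\beta$ gives $\beta=D^{1/(p+1)}$ and the value $\tfrac{p+1}{p-1}\,D^{2/(p+1)}$; it is this exponent $2/(p+1)$ applied to the constant in the tail bound that produces the stated factor. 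The hypothesis $p\ge2$ guarantees convergence of $\int_\beta^\infty 2t\cdot t^{-(p+1)}\,dt$, which you did correctly identify. A closed-form-moment route does exist in the literature (e.g.\ $\expect{\normf{\mg_1^\dagger}^2}=k/(p-1)$), but it yields a bound of a different shape, so as written your argument does not arrive at \eqref{pinv2norm2}; you would need to replace the speculative Gamma/Stirling step with the tail-integration-plus-optimization argument above.
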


\begin{proof} See Appendix~\ref{app_gauss}.
\end{proof}

We are ready to derive the main theorem on the expectation of standard Gaussian matrices. 
 
\paragraph{Proof of Theorem~\ref{t_gauss_exp}}
We start as in the proof of \cite[Theorem~10.5]{HMT09}.
The assumptions in Section~\ref{s_ass} and Remark~\ref{r_g} imply that
$\mU^*\mom$ is a standard Gaussian matrix. Since
 $\mom_1$ and $\mom_2$ are non-overlapping submatrices of $\mU^*\mom$, 
they are both standard Gaussian and stochastically independent.
 The sub-multiplicative property implies 
 $\normtwo{\mom_2\mom_1^\dagger} \leq \normtwo{\mom_2}\normtwo{\mom_1^\dagger}$. 

We use independence of $\mom_2$ and $\mom_1$
and apply both parts of (\ref{pinv2norm2});
with $\mu\equiv\sqrt{n-k}+\sqrt{k+p}$ this gives
\begin{eqnarray}\label{2norm_exp}
\expect{\normtwo{\mom_2\mom_1^\dagger}^2} \leq \> \left[\mu^2+ 2\left(\sqrt{\frac{\pi}{2}}\mu +1 \right)\right]\frac{p+1}{p-1}\left(\frac{1}{2\pi(p+1)}\right)^{\frac{2}{(p+1)}}\left(\frac{e\sqrt{k+p}}{p+1}\right)^2.
\end{eqnarray}
Bounding
$$\mu^2 + 2\left(\sqrt{\frac{\pi}{2}}\mu +1 \right) \leq (\mu + \sqrt{2})^2.$$
gives $\expect{\normtwo{\mom_2\mom_1^\dagger}^2} \leq {C}_{ge}$. 
Substituting into the result of Theorem~\ref{t_te} we have the desired result in Theorem~\ref{t_gauss_exp}.

For a positive definite matrix $\logdet(\ma) = \trace(\log(\ma))$, therefore 
\[ \logdet(\mi+\gamma^{2q-1}\normtwo{\mom_2\mom_1^\dagger}^2\mlam_2) = \sum_{j=k+1}^{n} \log\left(1 + \gamma^{2q-1}\normtwo{\mom_2\mom_1^\dagger}^2 \lambda_{j}\right). \]
Observe that $ \log(1+\alpha x)$ is a concave function. Using Jensen's inequality, for $j=k+1,\dots,n$
\[\expect{\log(1 + \gamma^{2q-1}\lambda_j\normtwo{\mom_2\mom_1^\dagger}^2)} \leq \log\left(1 +  \gamma^{2q-1}\lambda_j\expect{\normtwo{\mom_2\mom_1^\dagger}^2}\right). \]

Since $\log(1+\alpha x)$ is monotonically increasing function, the second result in Theorem~\ref{t_gauss_exp} follows by substituting the above equation into the bounds from Theorem~\ref{t_ld} and simplifying the resulting expressions.
\cvd

\subsubsection{Concentration inequality} \label{ss_g_dev}

As with the expectation bounds, it is clear that we must focus our attention on the term $\normtwo{\mom_2} \normtwo{\mom_1^\dagger}$. We reproduce here a result on on the concentration bound of this term. The proof is provided in~\cite[Theorem 5.8]{gu2015subspace}. 
\begin{lemma}\label{p_gauss_dev}
Let $\mom_2 \in \mathbb{R}^{(n-k)\times(k+p)}$ and $\mom_1 \in \mathbb{R}^{k\times (k+p)}$ be two independent Gaussian random matrices and let $p \geq 4$. Then for $ 0 < \delta < 1$, 
\begin{equation}\label{O2O1_dev}
\prob{ \normtwo{\mom_2}^2 \normtwo{\mom_1^\dagger}^2 \geq {C}_g} \leq \delta,
\end{equation}•
where ${C}_g$ is defined in Theorem~\ref{t_gauss}. 
\end{lemma}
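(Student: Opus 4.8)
The plan is to bound the two factors $\normtwo{\mom_2}$ and $\normtwo{\mom_1^\dagger}$ separately, each on an event of probability at least $1-\delta/2$, and then combine them through the union bound together with submultiplicativity, $\normtwo{\mom_2\mom_1^\dagger}\leq \normtwo{\mom_2}\,\normtwo{\mom_1^\dagger}$. By rotational invariance (Remark~\ref{r_g}) it suffices to treat $\mom_2$ and $\mom_1$ as standard Gaussian matrices of sizes $(n-k)\times(k+p)$ and $k\times(k+p)$; note that stochastic independence is not needed for the union-bound step itself, only for the factorizations used in the expectation version. This is the argument of \cite[Theorem~5.8]{gu2015subspace}, which I would reproduce as follows.

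First I would invoke the deviation bound for the spectral norm of a Gaussian matrix (Davidson--Szarek; see Appendix~\ref{app_gauss} and \cite[Proposition~10.3]{HMT09}): for $t\geq 0$, $\prob{\normtwo{\mom_2}\geq \sqrt{n-k}+\sqrt{k+p}+t}\leq e^{-t^2/2}$. Setting $e^{-t^2/2}=\delta/2$, i.e.\ $t=\sqrt{2\log(2/\delta)}$, yields $\normtwo{\mom_2}\leq \mu+\sqrt{2\log(2/\delta)}$ with probability at least $1-\delta/2$, where $\mu$ is as in~\eqref{e_mu}. Next I would invoke the tail bound for the pseudoinverse of a Gaussian matrix (\cite[Proposition~10.4]{HMT09}): for the stated range of $p$ and all $s\geq 1$, $\prob{\normtwo{\mom_1^\dagger}\geq \tfrac{e\sqrt{k+p}}{p+1}\,s}\leq s^{-(p+1)}$. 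Choosing $s=(2/\delta)^{1/(p+1)}$ so that $s^{-(p+1)}=\delta/2$ gives $\normtwo{\mom_1^\dagger}\leq \tfrac{e\sqrt{k+p}}{p+1}(2/\delta)^{1/(p+1)}$ with probability at least $1-\delta/2$.

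Finally, on the intersection of these two events, which by the union bound has probability at least $1-\delta$, multiplying and squaring gives
\[
\normtwo{\mom_2}^2\,\normtwo{\mom_1^\dagger}^2 \leq \Big(\mu+\sqrt{2\log(2/\delta)}\Big)^2\,\frac{e^2(k+p)}{(p+1)^2}\Big(\frac{2}{\delta}\Big)^{2/(p+1)} = C_g,
\]
which is precisely the claim, with $C_g$ as in Theorem~\ref{t_gauss}. There is no serious obstacle here; the only care required is bookkeeping — splitting the failure budget as $\delta/2+\delta/2$, matching the exponents so that $(2/\delta)^{2/(p+1)}$ and $\sqrt{2\log(2/\delta)}$ appear exactly as in the definition of $C_g$, and checking that the hypothesis $p\geq 4$ is the condition under which the pseudoinverse tail estimate from \cite{gu2015subspace} applies (a weaker $p\geq 2$ already suffices for the moment bound used in the expectation counterpart, Lemma~\ref{p_2norm2}).
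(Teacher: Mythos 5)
Your proposal is correct and follows exactly the route the paper takes: the paper simply delegates this lemma to \cite[Theorem~5.8]{gu2015subspace}, whose argument is precisely your two-tail-bound-plus-union-bound scheme, with the spectral-norm deviation bound for $\mom_2$ and the pseudo-inverse tail bound of \cite[Proposition~10.4]{HMT09} each allotted a $\delta/2$ failure budget, and the resulting product matching $C_g$ term for term. Your side remark about the hypothesis is also apt --- the pseudo-inverse tail bound as stated in the appendix needs only $p\geq 2$, so the $p\geq 4$ in the lemma is not actually forced by this argument (and indeed Theorem~\ref{t_gauss} assumes only $p\geq 2$).
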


The following statements mirror the discussion in~\cite[Section 5.3]{gu2015subspace}. While the oversampling parameter $p$ does not significantly affect the expectation bounds as long as $p \geq 2$, it seems to affect the concentration bounds significantly. The oversampling parameter $p$ can be chosen in order to make $(2/\delta)^{1/(p+1)}$ a modest number, say less than equal to $10$. Choosing 
\[ p = \left\lceil\log_{10}\left(\frac{2}{\delta}\right) \right\rceil -1, \] 
for $\delta = 10^{-16}$ gives us $p = 16$. In our experiments, we choose the value for the oversampling parameter $p = 20$.

\subsection{Rademacher random matrices}\label{s_probr}
We present results for the concentration bounds when $\mom$ is a Rademacher random matrix. We start with the following definition.

\begin{definition}
A  Rademacher random matrix has elements that are independently and identically distributed and take on values $\pm 1$ with equal probability. \end{definition}

Note that unlike standard Gaussian matrices, the distribution of a Rademacher matrix  is not rotationally invariant.

As before we partition $\mU = \begin{bmatrix} \mU_1 &\mU_2\end{bmatrix}$ and let $\mom_1 = \mU_1^*\mom$ and $\mom_2 = \mU_2^*\mom$. The following result bounds the tail of $\normtwo{\mom_2}^2\normtwo{\mom_1^\dagger}^2$. This result can be used to readily prove Theorem~\ref{t_rademacher}.
\begin{theorem}\label{t_rad}
Let $ \rho \in (0,1) $ and $0 < \delta < 1$ and integers $n,k \geq 1$. Let the number of samples $\ell$ be defined in Theorem~\ref{t_rademacher}. Draw a random Rademacher matrix $\mom \in \mathbb{R}^{n\times \ell}$. Then  
\[ \prob{ \normtwo{\mom_2}^2\normtwo{\mom_1^\dagger}^2 \geq {C}_r } \leq \delta,  \]
where $\mathcal{C}_r$ is defined in Theorem~\ref{t_rademacher}.
\end{theorem}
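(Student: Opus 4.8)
The plan is to bound the two factors separately. Since the rank condition~(\ref{e_ass2}) makes $\mom_1$ full row rank, $\normtwo{\mom_1^\dagger}=1/\sigma_{\min}(\mom_1)$, so $\normtwo{\mom_2}^2\normtwo{\mom_1^\dagger}^2=\normtwo{\mom_2}^2/\sigma_{\min}(\mom_1)^2$. Writing $\vz_1,\dots,\vz_\ell$ for the columns of $\mom$ --- i.i.d.\ Rademacher vectors in $\real^n$ --- the relevant Gram matrices are sums of independent rank-one positive semi-definite matrices,
\[
\mom_1\mom_1^*=\sum_{j=1}^\ell(\mU_1^*\vz_j)(\mU_1^*\vz_j)^*, \qquad
\mom_2\mom_2^*=\sum_{j=1}^\ell(\mU_2^*\vz_j)(\mU_2^*\vz_j)^*,
\]
with expectations $\ell\,\mi_k$ and $\ell\,\mi_{n-k}$ because $\mU_1^*\mU_1=\mi_k$, $\mU_2^*\mU_2=\mi_{n-k}$ and $\expect{\vz_j\vz_j^*}=\mi_n$. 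As $\sigma_{\min}(\mom_1)^2=\lambda_{\min}(\mom_1\mom_1^*)$ and $\normtwo{\mom_2}^2=\lambda_{\max}(\mom_2\mom_2^*)$, I would estimate these extreme eigenvalues with a matrix Chernoff inequality~\cite{tropp2011improved}: its lower tail, applied to $\mom_1\mom_1^*$, should give $\lambda_{\min}(\mom_1\mom_1^*)\geq(1-\rho)\ell$, while its upper tail, applied to $\mom_2\mom_2^*$, should give $\lambda_{\max}(\mom_2\mom_2^*)\leq\ell+3\bigl(\sqrt{n-k}+\sqrt{8\log(4\ell/\delta)}\bigr)^2\log(4(n-k)/\delta)$; dividing the second by the first yields exactly $C_r$.

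The obstacle is that matrix Chernoff needs an almost-sure bound on the spectral norms of the summands, i.e.\ on $\normtwo{\mU_1^*\vz_j}^2$ and $\normtwo{\mU_2^*\vz_j}^2$, whereas for Rademacher $\vz_j$ these hold only with high probability. I would obtain them from a concentration inequality for convex Lipschitz functions of Rademacher variables: for an orthogonal projector $P$ of rank $r$, the map $\vz\mapsto\normtwo{P\vz}$ is convex and $1$-Lipschitz, so it concentrates about its mean $\expect{\normtwo{P\vz}}\leq(\expect{\normtwo{P\vz}^2})^{1/2}=\sqrt{r}$ with a sub-Gaussian tail $\prob{\normtwo{P\vz}\geq\sqrt{r}+\sqrt{8\log(1/\eta)}}\leq\eta$. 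With $P=\mU_1\mU_1^*$ (so $r=k$), $\eta=\delta/(4n)$, and a union bound over the $\ell\leq n$ columns, this gives $\max_j\normtwo{\mU_1^*\vz_j}^2\leq R_1\equiv\bigl(\sqrt{k}+\sqrt{8\log(4n/\delta)}\bigr)^2$ with probability at least $1-\delta/4$; with $P=\mU_2\mU_2^*$ (so $r=n-k$), $\eta=\delta/(4\ell)$, and a union bound over the $\ell$ columns, it gives $\max_j\normtwo{\mU_2^*\vz_j}^2\leq R_2\equiv\bigl(\sqrt{n-k}+\sqrt{8\log(4\ell/\delta)}\bigr)^2$ with probability at least $1-\delta/4$.

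Conditioning on the first of these events keeps the $\vz_j$ independent (the event factors over $j$) and bounds each summand of $\mom_1\mom_1^*$ in norm by $R_1$, with conditional expectation within a negligible $O(\delta/n)$ of $\mi_k$; the conditional matrix Chernoff lower tail with relative error $\rho$ then has failure probability $\lesssim k\,e^{-\rho^2\ell/(2R_1)}$, which is at most $\delta/4$ precisely when $\ell\geq 2\rho^{-2}R_1\log(4k/\delta)$, the hypothesis of the theorem. A parallel argument on the second event, with per-summand bound $R_2$, matrix variance at most $R_2\ell$, and dimension factor $n-k$ in the upper tail, gives $\lambda_{\max}(\mom_2\mom_2^*)\leq\ell+3R_2\log(4(n-k)/\delta)$ outside an event of probability $\delta/4$. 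A union bound over the four bad events ($4\times\delta/4$) and division then give $\normtwo{\mom_2}^2\normtwo{\mom_1^\dagger}^2\leq[\ell+3R_2\log(4(n-k)/\delta)]/[(1-\rho)\ell]=C_r$ with probability at least $1-\delta$, proving Theorem~\ref{t_rad}; Theorem~\ref{t_rademacher} follows by inserting this into Theorems~\ref{t_te} and~\ref{t_ld}. I expect the main difficulty to be the bookkeeping --- choosing the truncation levels $R_1,R_2$ and apportioning the failure probability so the explicit constants emerge exactly, and making the passage from ``with high probability'' to the almost-sure hypothesis of matrix Chernoff rigorous through the conditioning just described.
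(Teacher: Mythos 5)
Your proposal is correct and follows essentially the same route as the paper's proof in Appendix~\ref{app_rad}: the same rank-one decomposition of $\mom_1\mom_1^*$ and $\mom_2\mom_2^*$, the same matrix Chernoff tails, the same Ledoux-type concentration for the convex Lipschitz map $\vz\mapsto\normtwo{P\vz}$ combined with a union bound over columns to get the truncation levels $R_1=C_l^2$ and $R_2=C_u^2$, the same conditioning to pass to the almost-sure hypothesis of Chernoff, and the same $\delta/4$ apportioning yielding exactly $C_r$. No gaps; your remark that conditioning on the truncation event preserves independence because it factors over the columns is, if anything, slightly more careful than the paper's own treatment.
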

\begin{proof}
See Appendix~\ref{app_rad}.~\qed
\end{proof}

\begin{remark}
From the proof of Theorem~\ref{t_rad}, to achieve $\normtwo{\mom_1^\dagger}  \geq 3 /\sqrt{\ell}$ with at 
least $99.5\%$ probability and $n=1024$, the number of samples required are 
\[\ell \geq 2.54 (\sqrt{k} + 11 )^2(\log(4k) + 4.7).  \]
Here $\rho = 8/9$ is chosen to be so that $1/(1-\rho) = 9$. 
\end{remark}

\begin{figure}\centering
\includegraphics[scale=0.5]{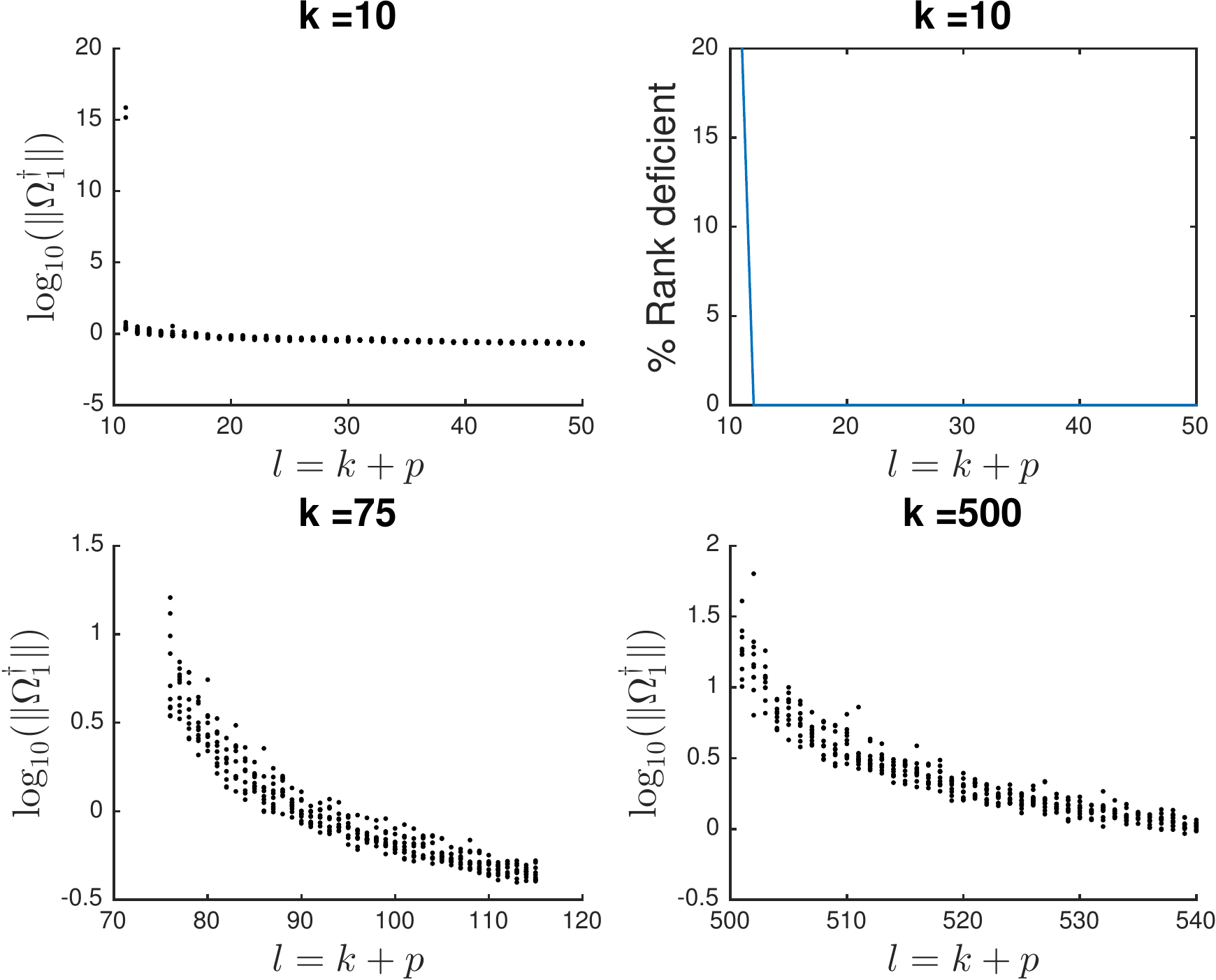}
\caption{The quantity $\log_{10}(\normtwo{\mom_1^\dagger})$ for different amounts of sampling $\ell = k+1$ to $\ell = k+40$. We consider $k=10$ (top left), $k=75$ (bottom left), and $k=500$ (bottom right). The top right plot shows the percentage of numerically rank deficient sampled matrices. }
\label{fig:radpinv}
\end{figure}
The imposition that $\ell \leq n$ implies that the bound may only be informative for $k$ small enough. Theorem~\ref{t_rad} suggests that the number of samples $\ell \sim (k+\log n) \log k$ to ensure that $\normtwo{\mom_1^\dagger}$ is small and $\rank(\mom_1) = k$.

We investigate this issue numerically. We generate random Rademacher matrices $\mom_1 \in \mathbb{R}^{k\times \ell}$; here we assume $\mU = \mi_n$. Here we choose three different values for $k$, namely $k = 10,75,500$. For each value of $k$, the oversampling $\ell$ varies from $\ell = k+1$ to $\ell = k+40$. We generate $500$ runs for each value of $\ell$. In Figure~\ref{fig:radpinv} we plot $\normtwo{\mom_1^\dagger}^2$ and the percentage that are rank deficient. For $k=10$, a few samples were rank deficient but the percentage of rank deficient matrices dropped significantly; after $p=20$ there were no rank deficient matrices. For larger values of $k$ we observed that none of the sampled matrices were rank deficient and $p=20$ was sufficient to ensure that $\normtwo{\mom_1^\dagger}^2 \lesssim 10$. Similar results were observed for randomly generated orthogonal matrices $\mU$. In summary, a modest amount of oversampling $p \lesssim 20$ is sufficient to ensure that $\rank(\mom_1) = k$ for the Rademacher random matrices, similar to Gaussian random matrices. In further, numerical experiments we shall use this particular choice of oversampling parameter $p$.

\section{Numerical Experiments}\label{num_small}
In this section, we demonstrate the performance of our algorithm and bounds on two different kinds of examples. In the first example, we focus on small matrices (with dimension $128$) in the non-asymptotic regime. We show that our bounds are informative and our estimators are accurate with high probability. In the second examples, we look at medium sized matrices (of dimension $5000$) and demonstrate the behavior of our estimators. 
\subsection{Small matrices}
In this section we study the performance of the proposed algorithms on small test examples. 

The  $\ma$ is chosen to be of size $128\times 128$ and its eigenvalues satisfy $\lambda_{j+1} = \gamma^j \lambda_1$ for $j=1,\dots,n-1$. %
To help interpret the results of Theorems~\ref{t_gauss_exp}-\ref{t_rademacher}, we provide simplified versions of the bounds. The relative error in the trace estimator can be bounded as 
\begin{equation}\label{e_cond_err}
 \Delta_t \equiv \frac{\trace(\ma) - \trace(\mt)}{\trace(\ma)} \leq \>  (1+\gamma^{2q-1}C)\frac{\gamma^k(1-\gamma^{n-k})}{1-\gamma^n}. 
\end{equation}
Here $C$ can take the value $C_g$ for Gaussian starting guess and $C_r$ for Rademacher starting guess.

For the logdet estimator, we observe that $\log(1+x) \leq x$. Using the
relation $\logdet(\mi+\mlam) = \trace\log(\mi+\ma)$, it is reasonable to bound
$\logdet(\mi+\mlam)$ by $\trace(\mlam)$. With the abbreviation $f(\cdot) =
\log\det(\cdot)$  we can bound the relative error of the logdet estimator as 
\[ \Delta_l \equiv \frac{f(\mi+\ma) - f(\mi+\mt)}{f(\mi+\ma)} \leq (1+\gamma^{2q-1}C)\frac{\gamma^k(1-\gamma^{n-k})}{1-\gamma^n}. \]

Consequently, the error in the trace and logdet approximations approach $0$ as $k\rightarrow n$ and is equal to $0$ if $k=n$.

In the following examples, we study the performance of the algorithms with increasing sample size. It should be noted here that, since $\ell = k+p$ and $p$ is fixed, increasing sample size corresponds to increasing the dimension $k$; consequently, the location of the gap is changing, as is the residual error $\Delta = \trace(\mlam_2)/\trace(\mlam)$.

\begin{figure}[!ht]\centering
\includegraphics[scale=0.3]{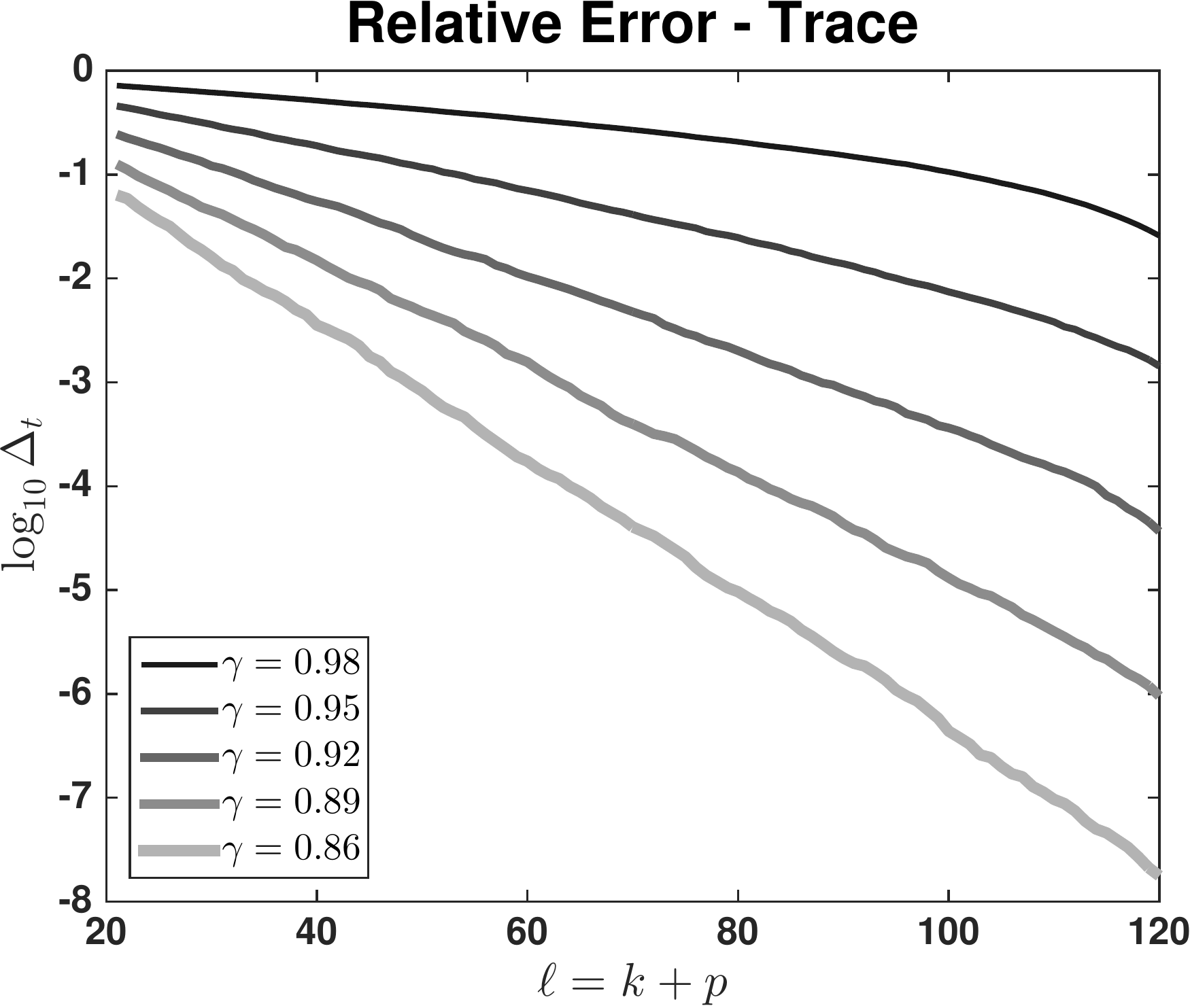}
\includegraphics[scale=0.3]{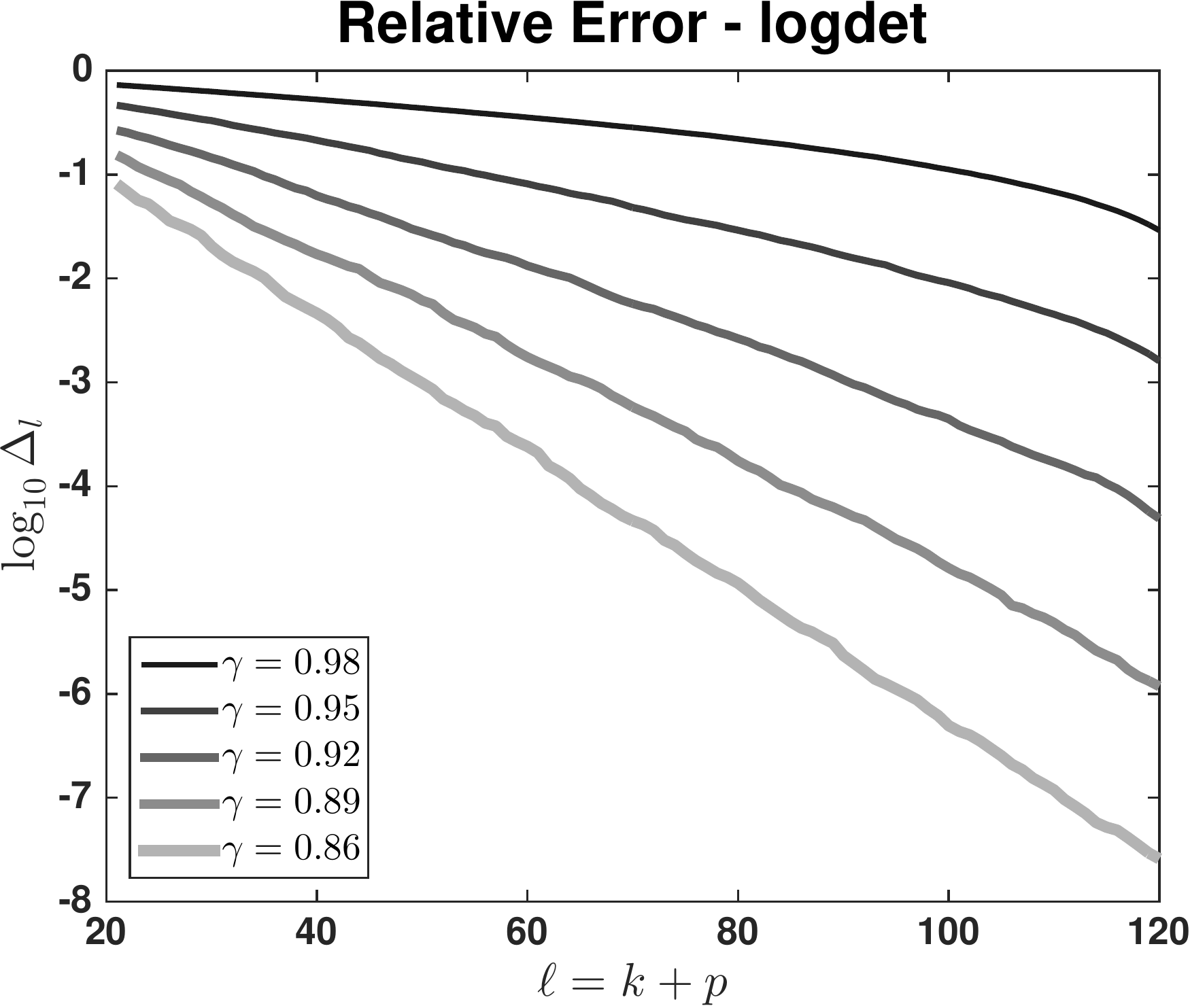}
\caption{Accuracy of proposed estimators on matrix with geometrically decaying  eigenvalues.  The relative error is plotted against the sample size. Accuracy of (left) trace and (right) logdet estimators. Here Gaussian starting guess was used.}
\label{f_cond}
\end{figure}

\paragraph{1. Effect of eigenvalue gap}
Matrices $\ma$ are generated with different eigenvalue distributions using the example code above. The eigenvalue gap parameter $\gamma$ varies from $0.98$ to $0.86$. We consider sampling from both Gaussian random matrices. The oversampling was set to be $p=20$ for both distributions. The subspace iteration parameter $q$ was set to be $1$. The results are displayed in Figure~\ref{f_cond}. Clearly, both the trace and logdet become increasingly accurate as the eigenvalue gap increases. This confirms the theoretical estimate in~\eqref{e_cond_err} since the error goes to zero as $k\rightarrow n$. The behavior of the error with both Gaussian and Rademacher starting guesses is very similar and is not displayed here.

\paragraph{2. Comparison with Monte Carlo estimators}
We fix the eigenvalue gap to $\gamma = 0.9$, sampling parameter $p=20$ and subspace iteration parameter $q=1$.  We consider sampling from both Gaussian and Rademacher random matrices and consider their accuracy against their Monte Carlo counterparts. As mentioned earlier, the Monte Carlo estimator cannot be directly applied to the logdet estimator; however, using the following identity
\[ \logdet(\mi_n +\ma) = \trace\log(\mi_n+\ma), \] 
the Monte Carlo estimators can be applied to the matrix $\log(\mi_n+\ma)$. For a fair comparison with the estimators proposed in this paper, the number of samples used equals the target rank plus the oversampling parameter $p=20$, i.e., $(k+p)$ samples. We averaged the Monte Carlo samplers over $100$ independent runs. The results are illustrated in Figure~\ref{f_mc}. It can be readily seen that when the matrix has rapidly decaying eigenvalues, our estimators are much more accurate than the Monte Carlo estimators. The number of samples required for the Monte Carlo methods for a  relative accuracy $\epsilon$ depends as $\epsilon^{-2}$, so the number of samples required for an accurate computation can be large. For the logdet estimator, initially the Monte Carlo estimator seems to outperform our method for small sample sizes; however, the error in our estimators decays sharply. It should be noted that for this small problem one can compute $\log(\mi+\ma)$ but for a larger problem it maybe costly even prohibitively expensive. For all the cases described here, Gaussian and Rademacher random matrices seem to have very similar behavior.  
\begin{figure}[!ht]\centering
\includegraphics[scale=0.3]{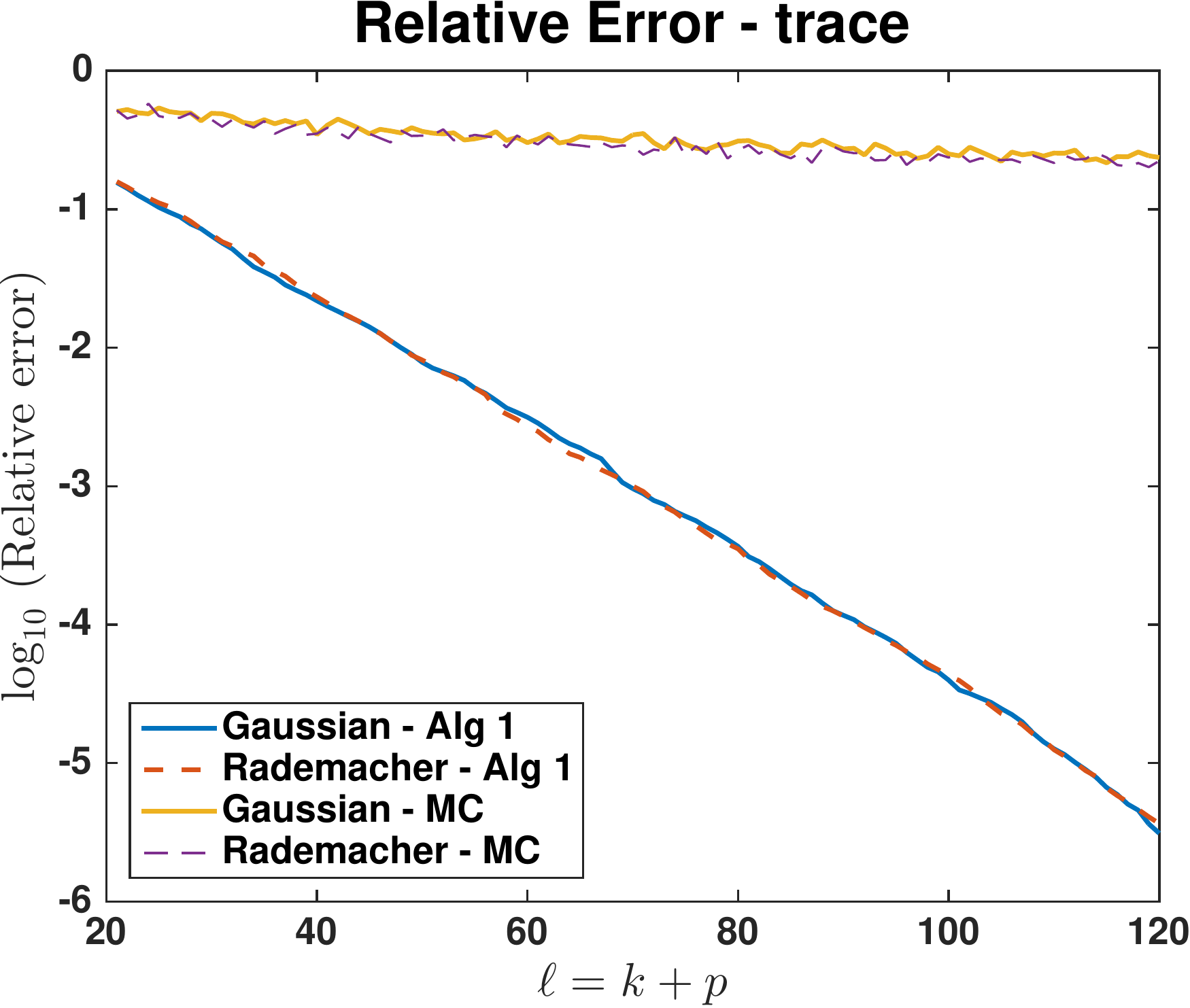}
\includegraphics[scale=0.3]{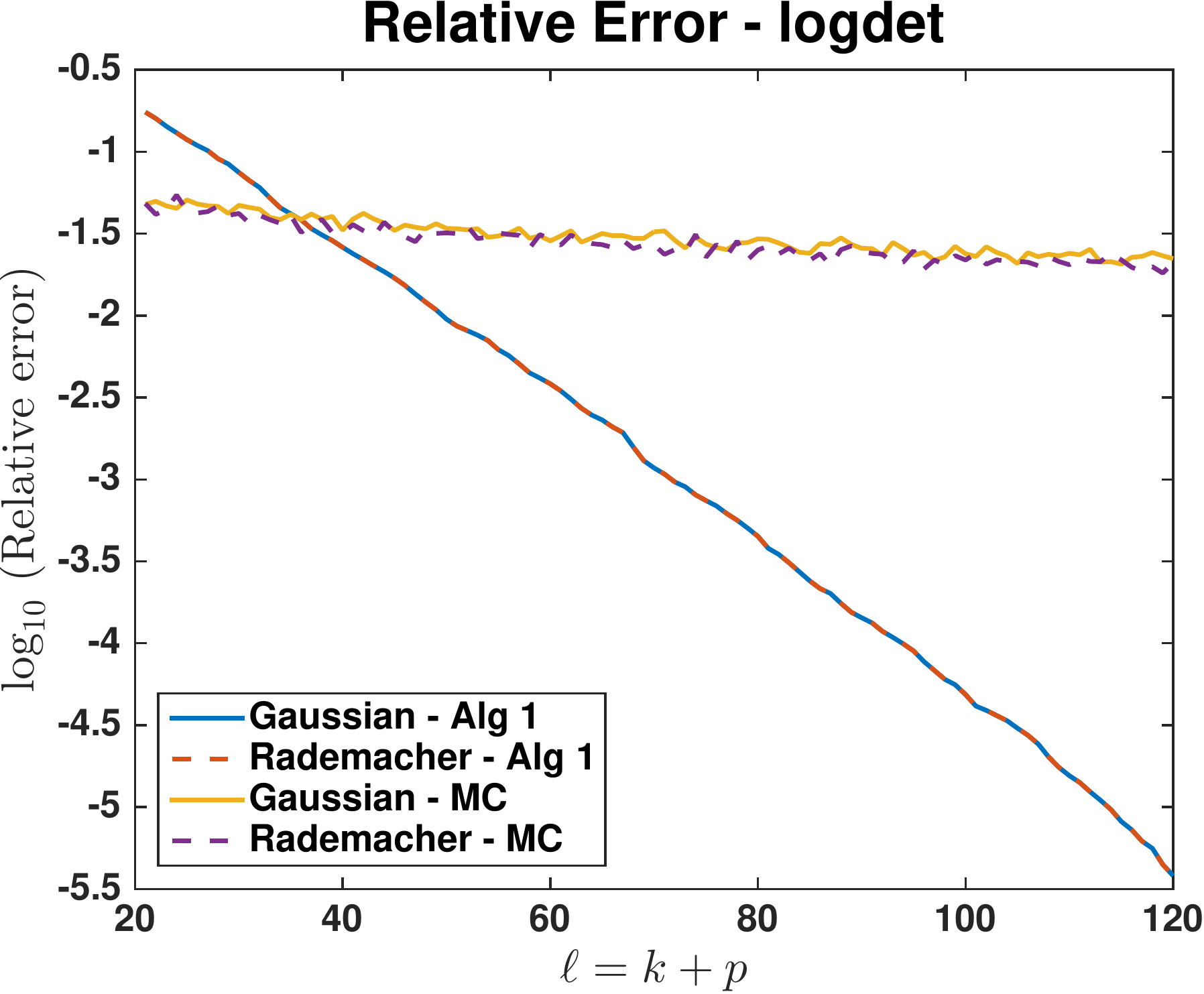}
\caption{Comparison against Monte Carlo estimators for (left) trace and (right) logdet computations.  The relative error is plotted against the sample size. }
\label{f_mc}
\end{figure}

\paragraph{3. Effect of subspace iteration parameter}
The matrix $\ma$ is the same as in the previous experiment but $p$ is chosen to be $0$. The subspace iteration parameter is varied from $q=1$ to $q=5$.  The results of the relative error as a function of $\ell$ are displayed in Figure~\ref{f_tq}. The behavior is similar for both Gaussian and Rademacher starting guess, therefore we only display results for Gaussian starting guess. 
We would like to emphasize that Algorithm~\ref{alg:randsubspace} is not implemented \textit{as is} since it is numerically unstable and susceptible to round-off error pollution; instead a numerically stable version is implemented based on~\cite[Algorithm A.1]{gu2015subspace}. As can be seen, increasing the parameter improves the accuracy for a fixed target rank $k$. However, both from the analysis and the numerical results, this is clearly a case of diminishing returns. This is because the overall error is dominated by $\trace(\mlam_2)$ and $\logdet(\mi_{n-k}+\mlam_2)$. Increasing the subspace iteration parameter $q$, only improves the multiplicative factor in front of one of the terms. Moreover, in the case that the eigenvalues are decaying rapidly, one iteration, i.e. $q=1$ is adequate to get an accurate estimator.  
\begin{figure}[!ht]\centering
\includegraphics[scale=0.3]{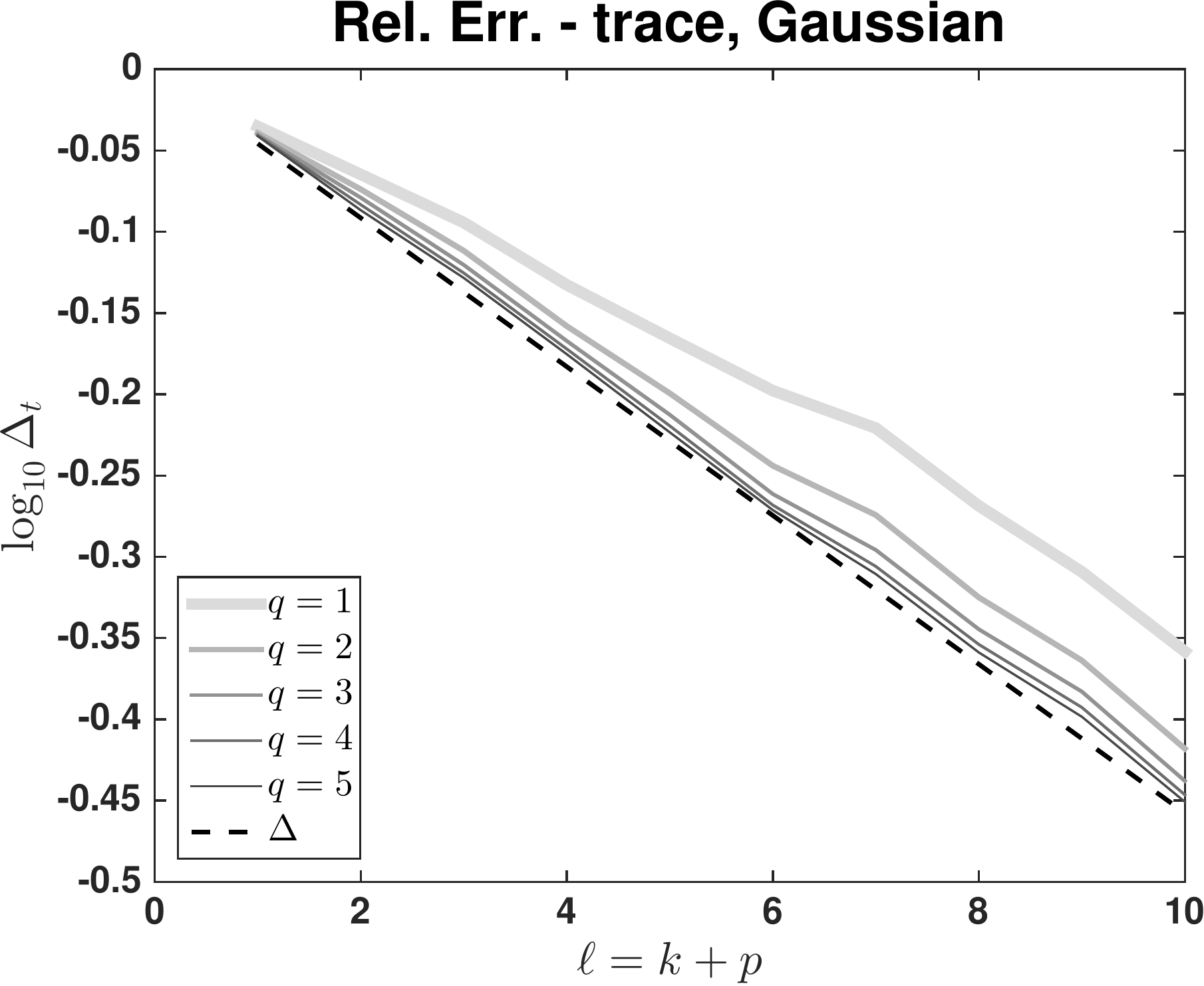}
\includegraphics[scale=0.3]{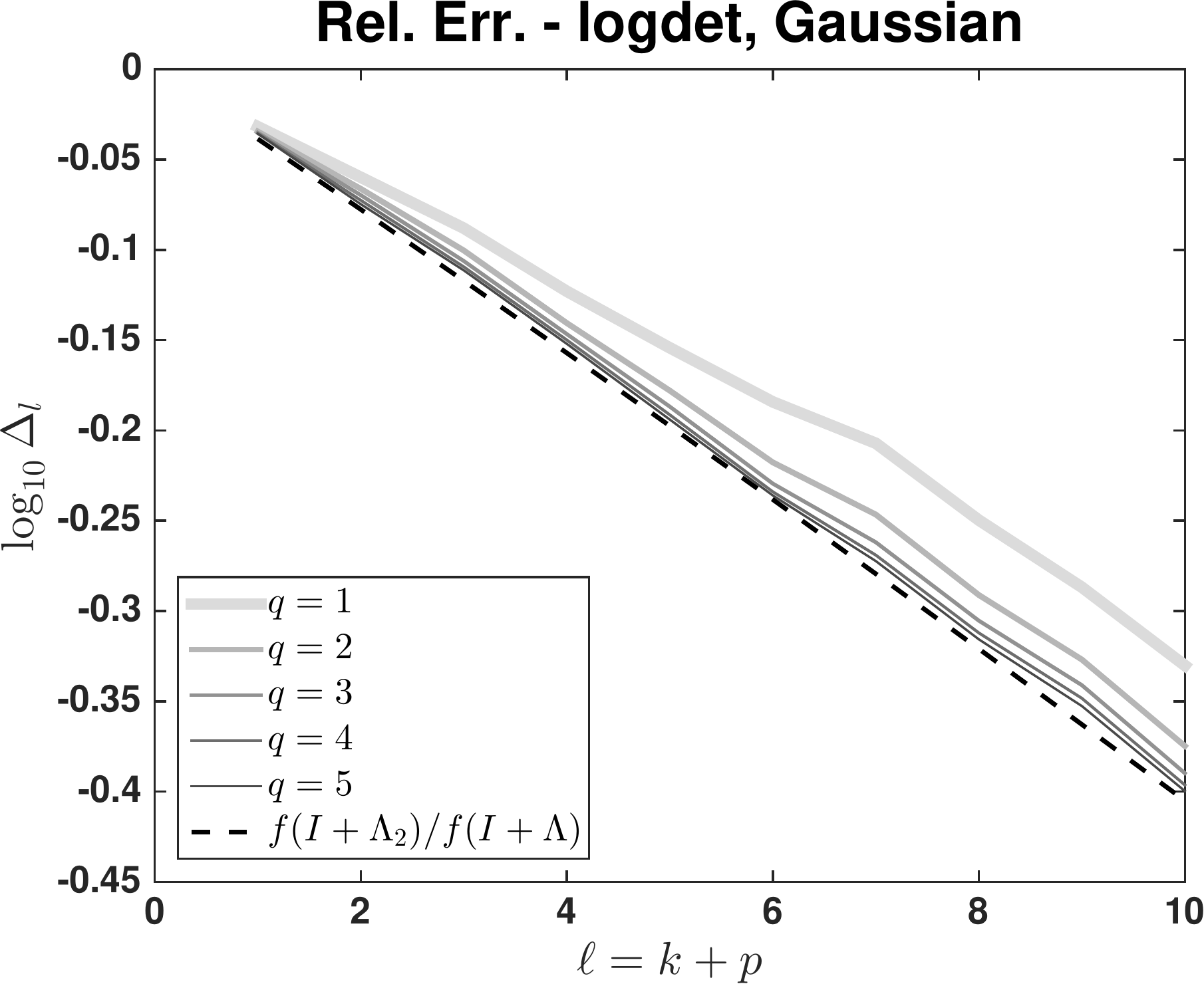}
\caption{Effect of subspace iteration parameter $q$ on the  (left) trace estimator and (right) logdet estimator.  The relative error is plotted against the sample size. Gaussian starting guess was used. The behavior is similar for Rademacher starting guess.}
\label{f_tq}
\end{figure}

\paragraph{4. How descriptive are the bounds?} In this experiment we demonstrate the accuracy of the bounds derived in Section~\ref{s_struct}.  The matrix is chosen to be the same as the one in Experiment 2.  In Figure~\ref{f_err_est} we consider the bounds in the trace estimator derived in Theorem~\ref{t_te}. We consider both the Gaussian (left panel) and Rademacher distributions (right panel). For comparison we also plot the term $\Delta$, which is the theoretical optimum. `Est 1' refers to the first bound in ~\eqref{e_te1} and `Est 2' refers to the second bound in~\eqref{e_te2}. Both the bounds are qualitatively similar to both the true error and the theoretical estimate $\Delta$, and also quantitatively within a factor of $10$ of the theoretical estimate $\Delta$. Since $\gamma$ is close to $1$ and $\normtwo{\mom_2\mom_1^\dagger} > 1$, $\gamma\normtwo{\mom_2\mom_1^\dagger} > 1$ and therefore `Est 1' is a more accurate estimator. The error of the logdet estimator is plotted against the theoretical bounds (see Theorem~\ref{t_ld}) in Figure~\ref{f_err_est_logdet}; as before, our estimator is both qualitatively and quantitatively accurate. The conclusions are identical for both Gaussian and Rademacher matrices. The empirical performance of this behavior is studied in the next experiment.  
\begin{figure}[!ht]\centering
\includegraphics[scale=0.3]{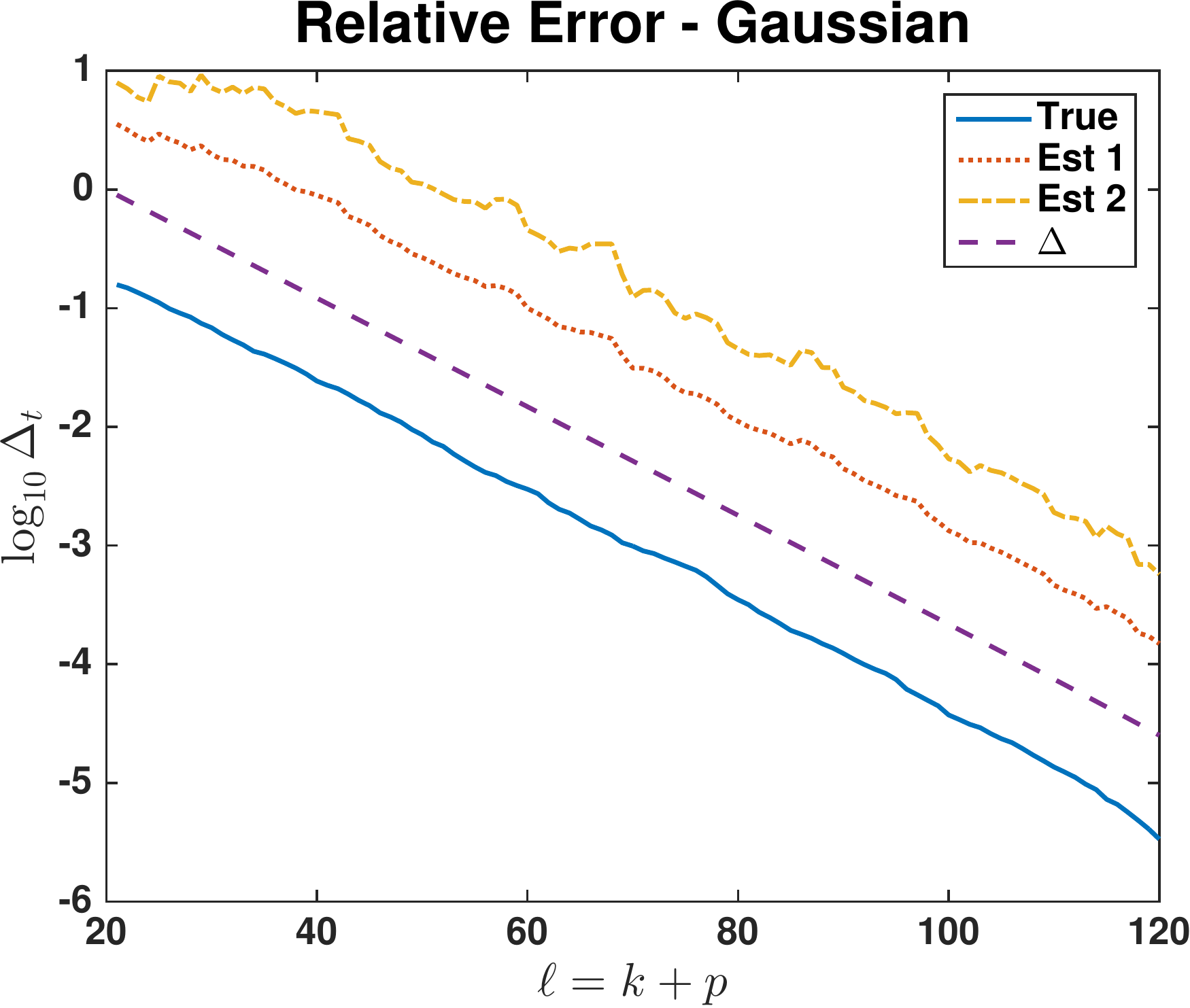}
\includegraphics[scale=0.3]{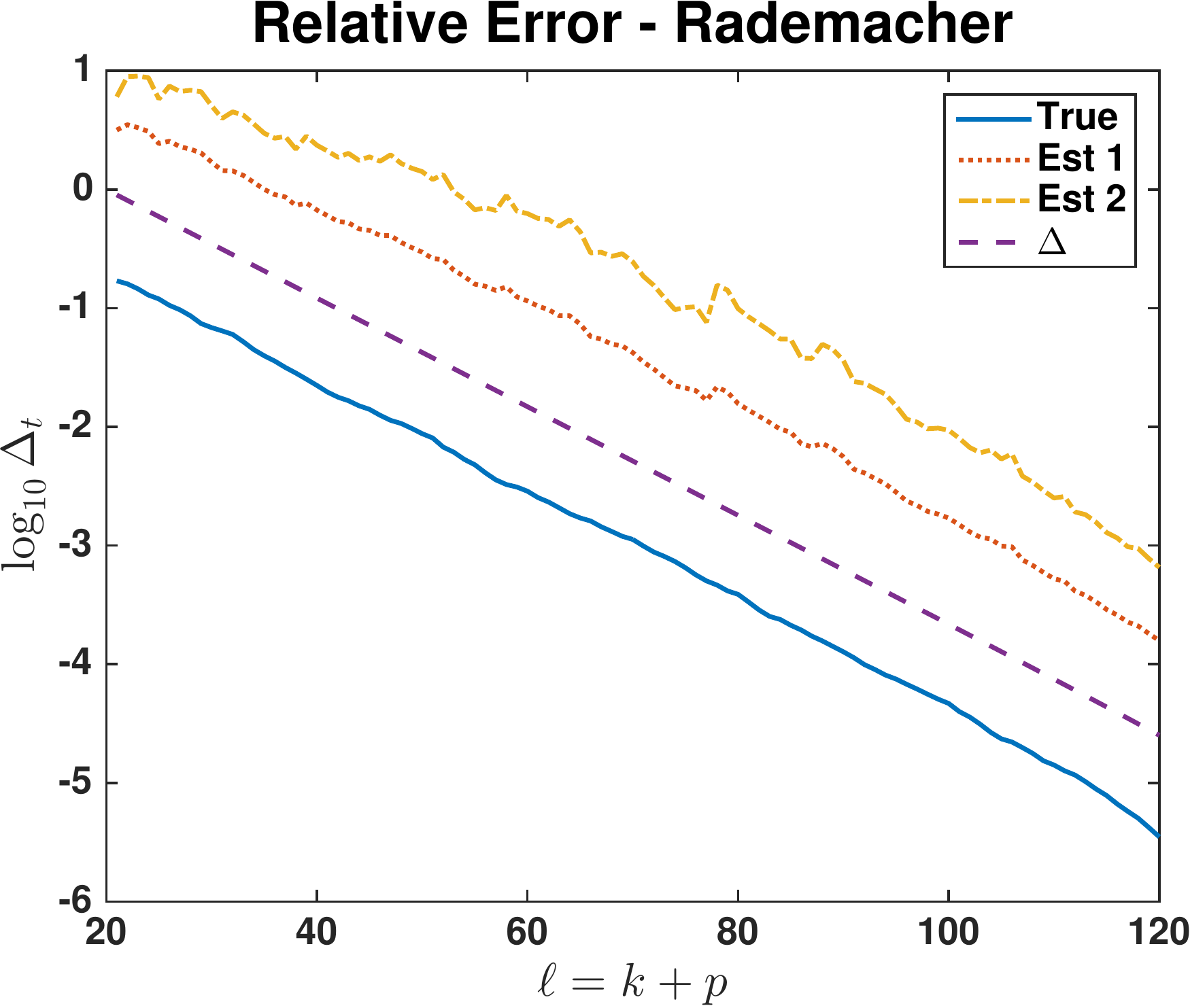}
\caption{Accuracy of the error bounds for the Trace estimators. The relative error is plotted against the sample size. (left) Gaussian and (right) Rademacher random matrices. `Est1'  and `Est2' refers to the bounds in Theorem~\ref{t_te}. For comparison, we also plot $\trace(\mlam_2)/\trace(\ma)$.}
\label{f_err_est}
\end{figure}

\begin{figure}[!ht]\centering
\includegraphics[width=0.5\textwidth]{./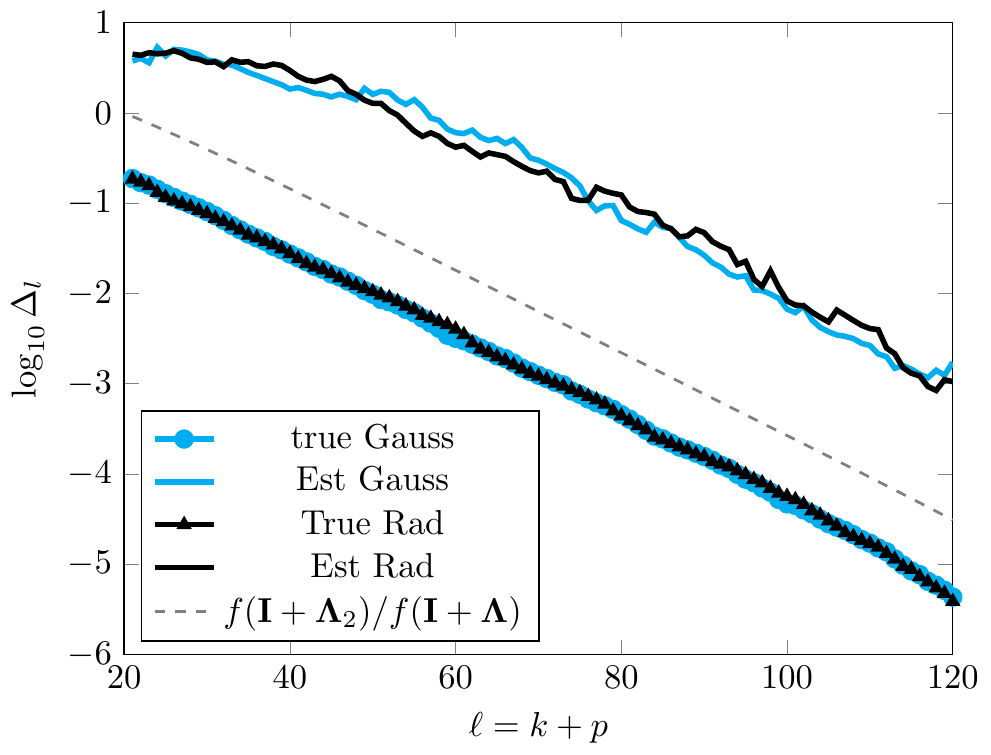}
\caption{Accuracy of the error bounds for the logdet estimators.  The relative error is plotted against the sample size. Both Gaussian and Rademacher starting guesses are used. For comparison, we also plot $\logdet(\mi+\mlam_2)/\logdet(\mi+\ma)$.}
\label{f_err_est_logdet}
\end{figure}

\begin{figure}[!ht]\centering
\includegraphics[scale=0.8]{./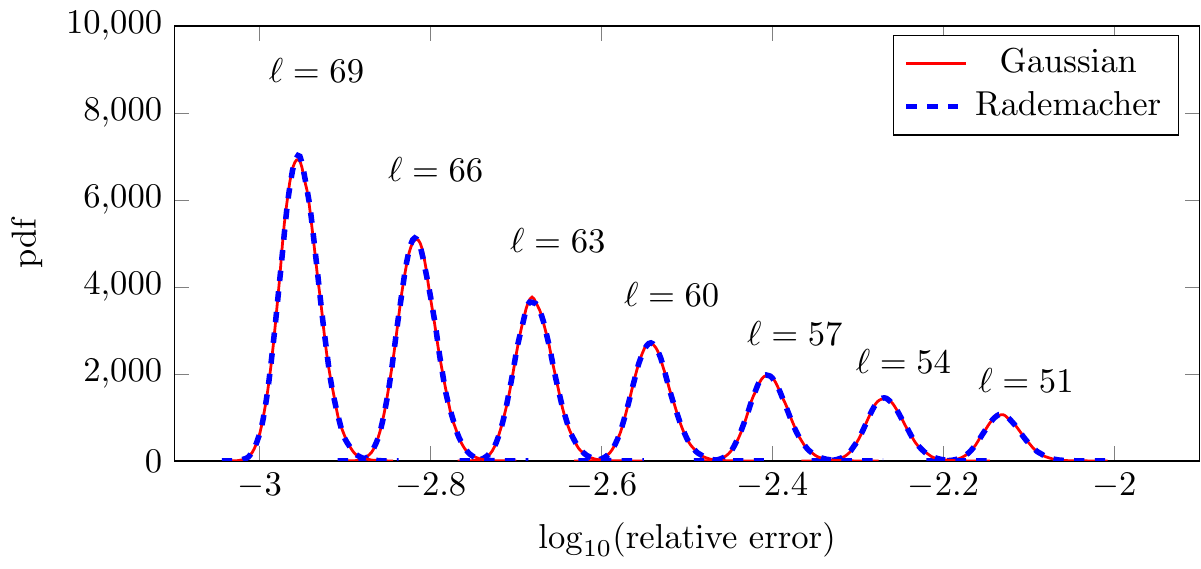}
\caption{Empirical distribution of the relative error for the trace estimator. 
$10^5$ samples were used for the distribution.}
\label{f_emp_trace}
\end{figure}

\paragraph{5. Concentration of measure}
 We choose the same matrix as in Experiment 2. We generate $10^5$ starting guesses (both Gaussian and Rademacher) and compute the distribution of relative errors for the trace (quantified by $\Delta_t$) and logdet (quantified by $\Delta_l$). Figures~\ref{f_emp_trace} and~\ref{f_emp_logdet} show the empirical probability density function for the relative errors in the trace and logdet respectively. We observe that the two distributions are nearly identical and that the empirical density is concentrated about the mean. Furthermore, as the sample size $\ell$ increases, the both the mean and variance of the empirical distribution decrease. These results demonstrate that the randomized methods are indeed effective with high probability.

\begin{figure}[!ht]\centering
\includegraphics[scale=0.8]{./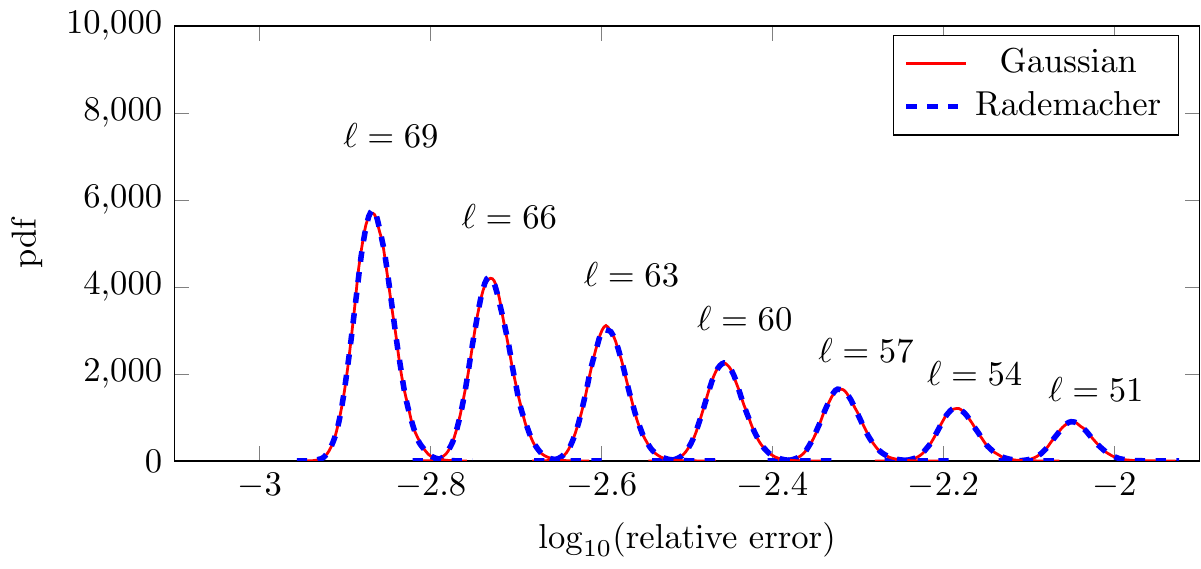}
\caption{Empirical distribution of the relative error for the logdet estimator. 
$10^5$ samples were used for the distribution.}
\label{f_emp_logdet}
\end{figure}

\subsection{Medium sized example}

This example is inspired by a test case from Sorensen and Embree~\cite{SoE14}. Consider the matrix $\ma\in \mathbb{R}^{5000\times 5000}$ defined as 
\begin{equation}\label{e_medium}
\ma \equiv \sum_{j=1}^{40} \frac{h}{j^2}\vx_j\vx^T_j +\sum_{j=41}^{300} \frac{l}{j^2}\vx_j\vx^T_j,
\end{equation}
where $\vx_j \in \mathbb{R}^{5000}$ are sparse vectors with random nonnegative entries. In MATLAB this can be generated using the command
\verb|xj = sprand(5000,1,0.025)|. It should be noted the vectors $\vx_j$ are not orthonormal; therefore, the outer product form is not the eigenvalue decomposition of the matrix $\ma$. However, the eigenvalues decay like $1/j^2$ with a gap at index $40$, and its magnitude  depends on the ratio $h/l$. The exact rank of this matrix is $300$.

\begin{figure}[!ht]\centering
\includegraphics[scale=0.3]{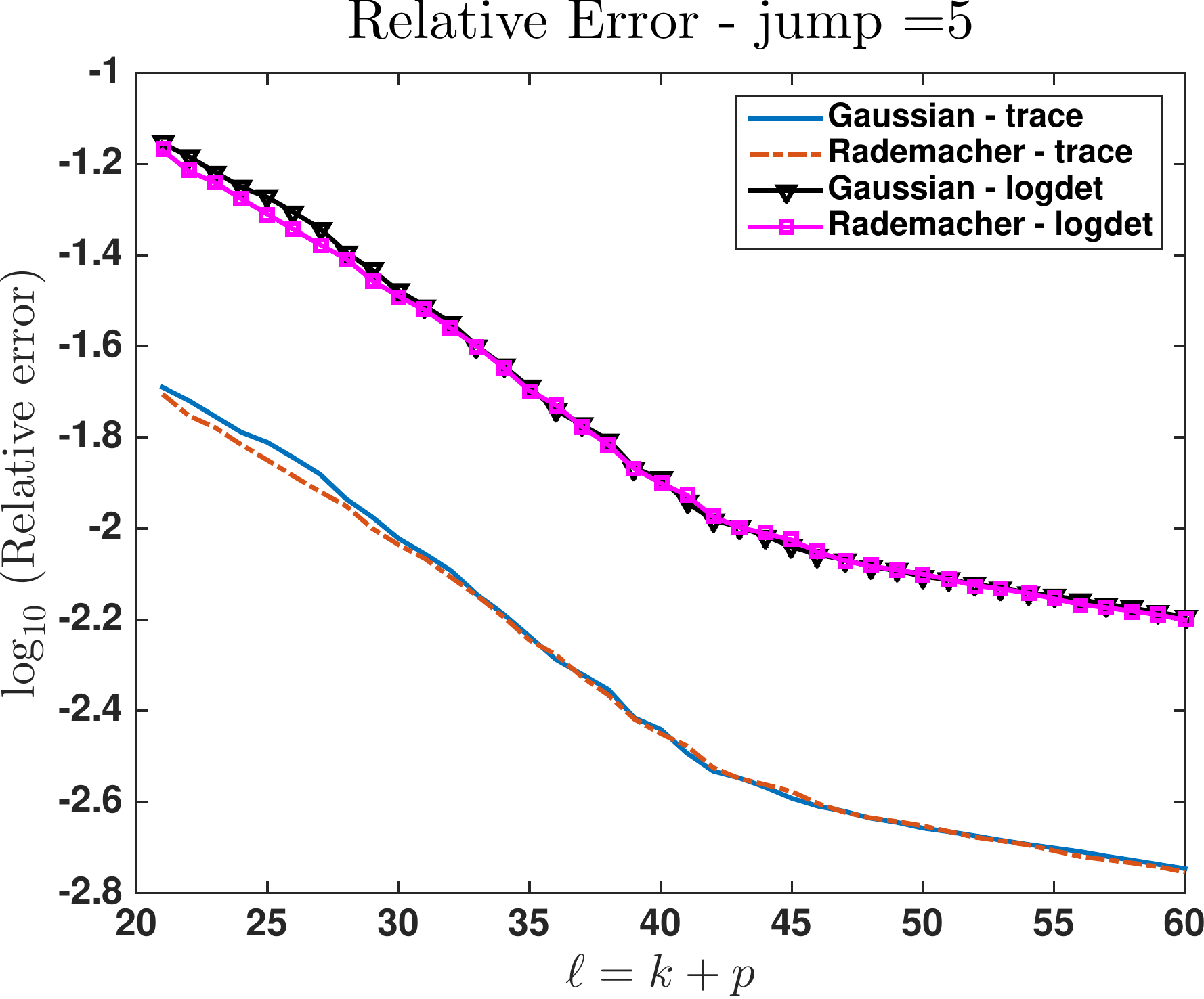}
\includegraphics[scale=0.3]{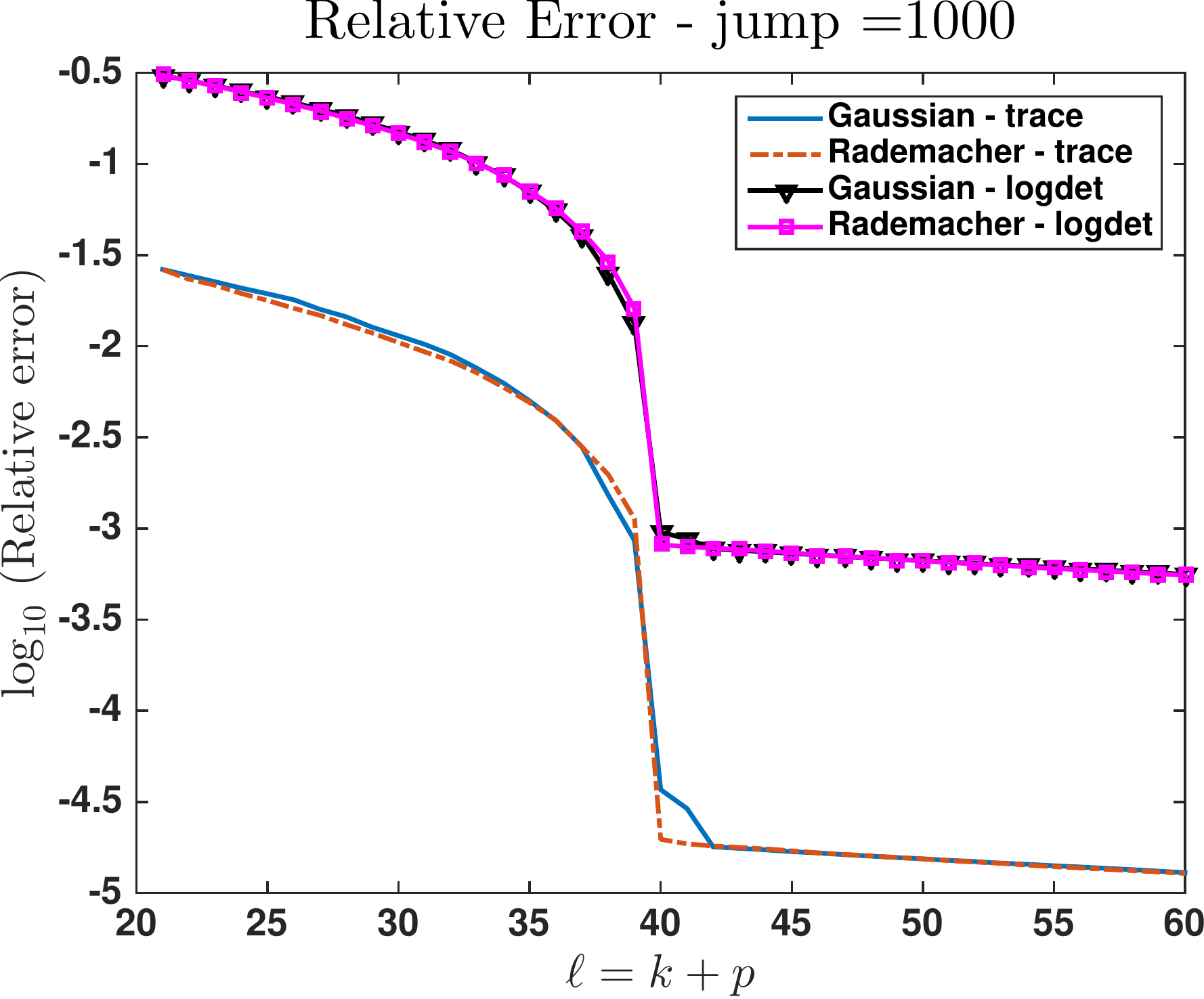}
\caption{Accuracy of trace and logdet computations for the matrix in~\eqref{e_medium}. (left) $l=1, h=5$ and (right) $l=1,h=1000$.}
\label{f_med1}
\end{figure}

First we fix $l=1$ and consider two different cases $h=5$, and $1000$. The oversampling parameter $p=20$ and the subspace iteration parameter is $q=1$. The results are displayed in Figure~\ref{f_med1}. The accuracy of both the trace and the logdet estimators improves considerably around the sample size $\ell = 40$ mark, when the eigenvalues undergo the large jump for $h=1000$; the transition is less sharp when $h=5$. This demonstrates the benefit of having a large eigenvalue gap in the accuracy of the estimators. As an extreme case, consider $l=0$ and $h=2$. In this example, the matrix $\ma$ has exactly rank $40$, and therefore $40$ matrix-vector products with $\ma$ are enough to recover the trace and logdet to machine precision. This result highlights the power of our proposed estimators. 
\begin{figure}[!ht]\centering
\includegraphics[scale=0.4]{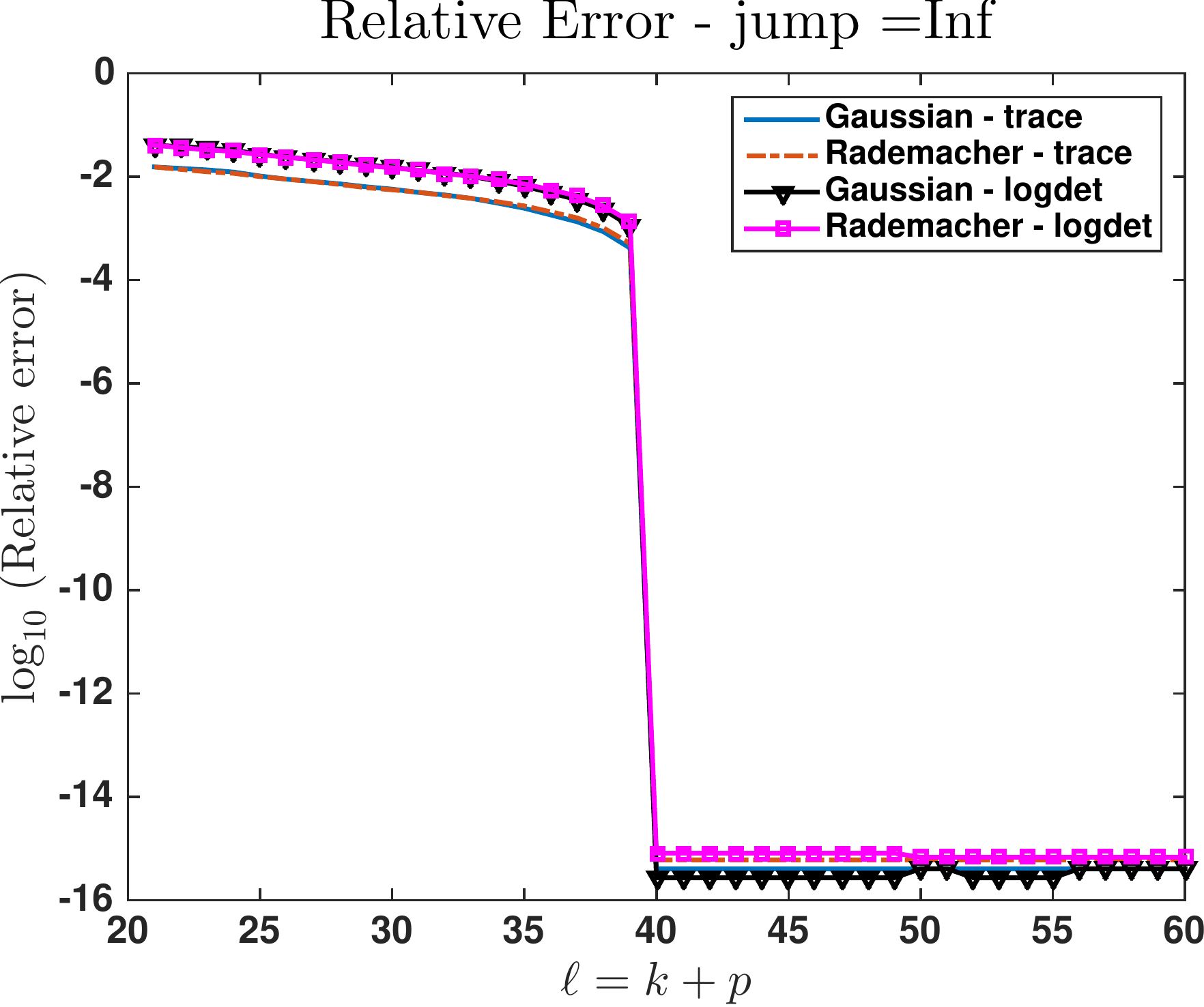}
\caption{Accuracy of trace and logdet computations for the matrix in~\eqref{e_medium} with $l=0,h=2$.}
\label{f_med2}
\end{figure}

\begin{figure}\centering
\begin{tabular}{cc}
\includegraphics[width=0.4\textwidth]{./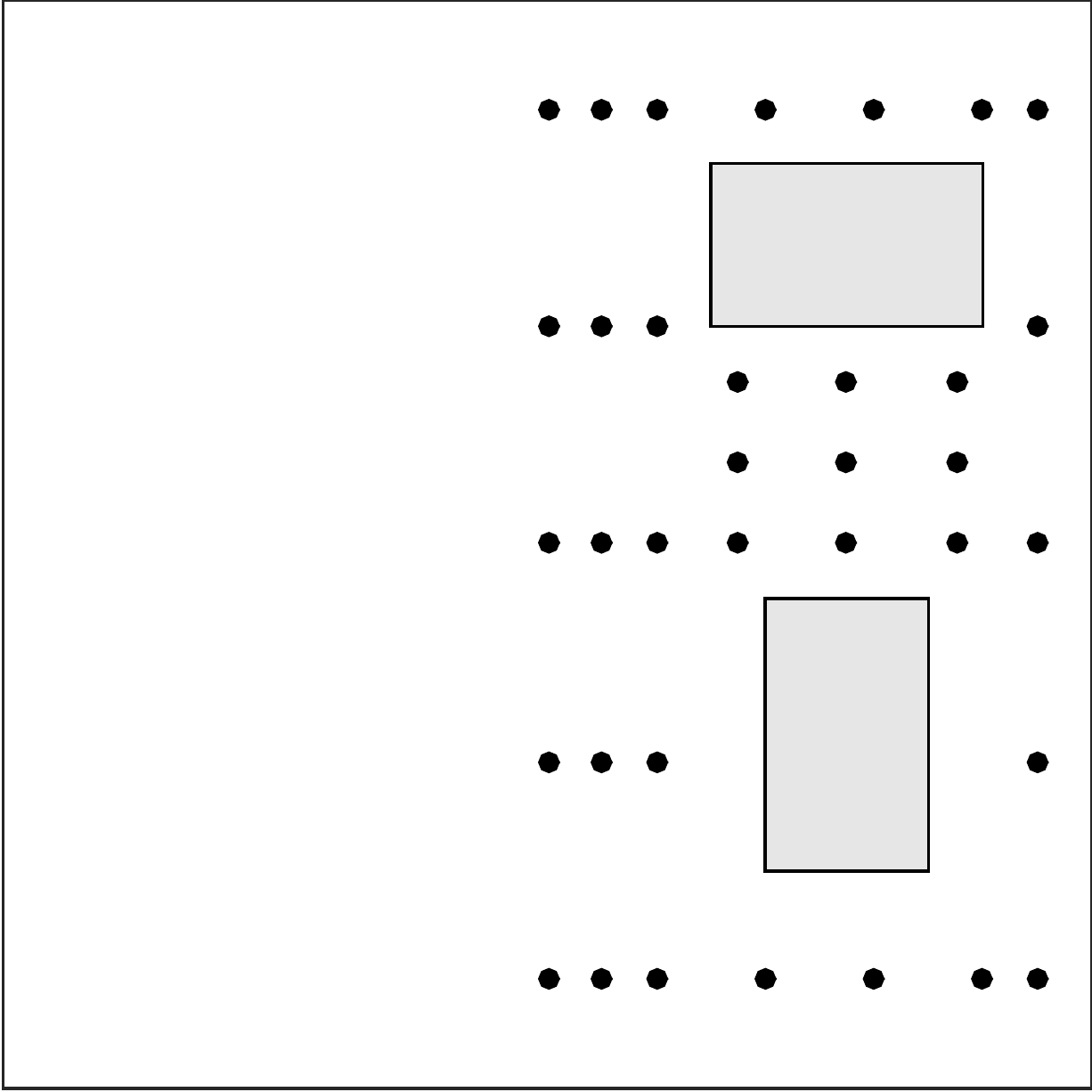}&
\includegraphics[width=0.4\textwidth]{./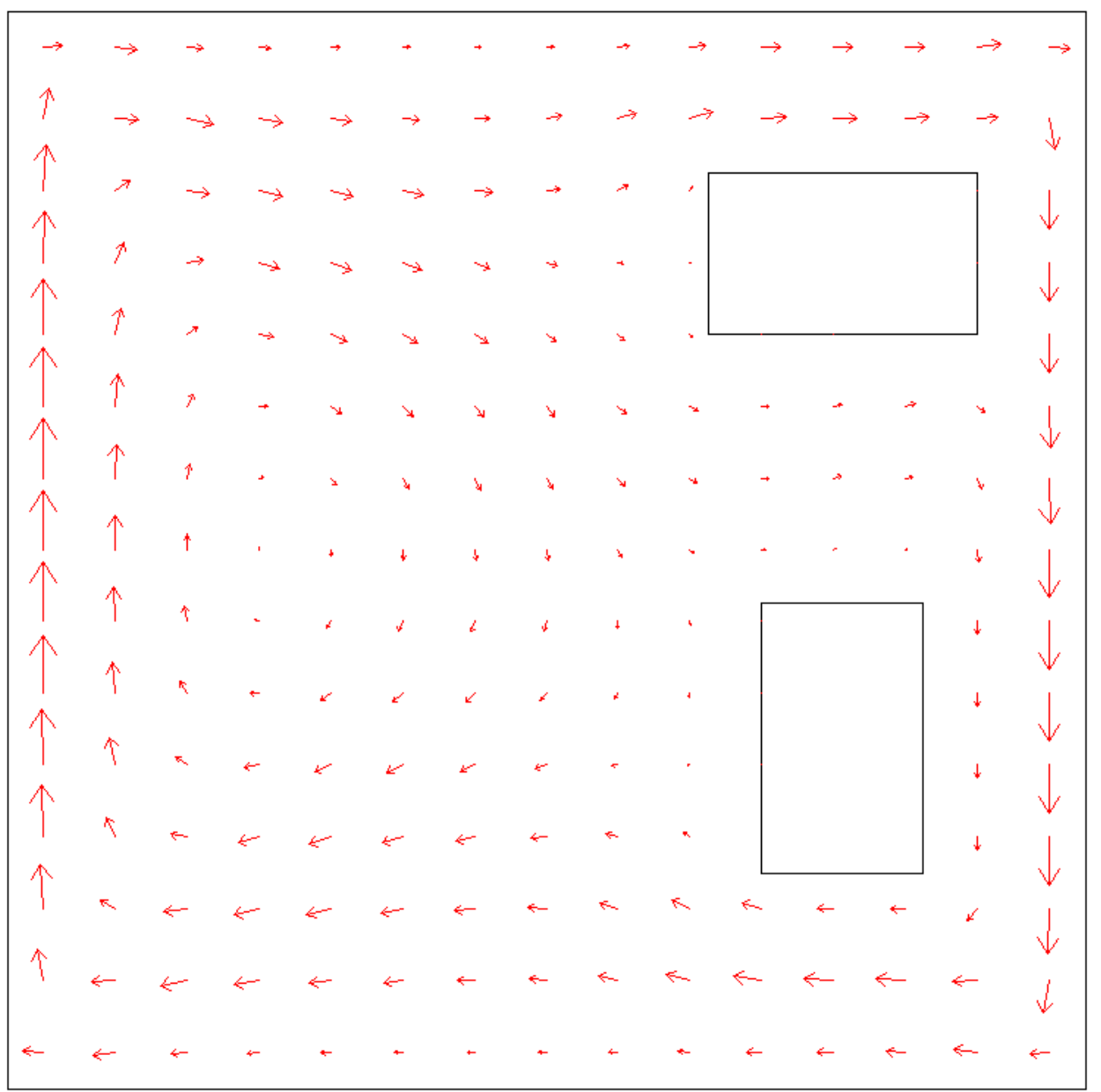} 
\end{tabular}
\caption{Left: the computational domain $\D$ 
is the region $[0,1]^2$ with two rectangular regions (representing buildings) removed.
The black dots indicate the locations of sensor where observations are recorded.
Right: the velocity field.} 
\label{fig:ad_dom}
\end{figure}

\section{Applications to evaluation of uncertainty quantification measures}\label{s_uq}
As mentioned in the introduction, the computation of
traces and log-determinants of high-dimensional operators is essential is the
emerging field of uncertainty quantification.  In this section, we use the
methods developed in this article to compute some common statistical quantities
that appear in the context of Bayesian inverse problems.  In particular, we
focus on a time-dependent advection-diffusion equation in which we seek to
infer an uncertain initial condition from measurements of the concentration at
discrete points in space/time; This is a commonly used example in the inverse
problem community; see e.g.,
\cite{AlexanderianPetraStadlerEtAl14,AkcelikBirosDraganescuEtAl05,FlathWilcoxAkcelikEtAl11,PetraStadler11}.
Below, we briefly outline the components of the Bayesian inverse problem. The
model problem used here is an adaptated from~\cite{AlexanderianPetraStadlerEtAl14}, and therefore, we refer the readers
to that paper for further details.

\paragraph{The forward problem} 
The forward problem models diffusive transport of a contaminant 
in a domain $\D \subset \mathbb{R}^2$, which is depicted in
Figure~\ref{fig:ad_dom} (left). The domain boundary $\partial \D$, is a combination of the outer edges of the domain 
as well as the internal boundaries of the rectangles
that model buildings.
The \emph{forward operator} maps an initial condition $\ipar$ 
to space/time observations of the contaminant concentration, by solving the
advection diffusion equation,
\begin{equation}\label{eq:ad}
  \begin{aligned}
    u_t - \kappa\Delta u + \mathbf{v}\cdot\nabla u &= 0 & \quad&\text{in
    }\D\times (0,T), \\
    u(\cdot, 0) &= \ipar  &&\text{in } \D , \\
    \kappa\nabla u\cdot \vec{n} &= 0 &&\text{on } \partial\D \times (0,T),
  \end{aligned}
\end{equation}
and extracting solution values at spatial points (sensor locations as indicated in Figure~\ref{fig:ad_dom} (left)) and at
pre-specified times. Here, $\kappa > 0$ is the diffusion coefficient and $T >
0$ is the final time.  In our numerical experiments, we use $\kappa = 0.001$.  
The velocity field $\vec{v}$, shown in Figures~\ref{fig:ad_dom}, is obtained by
solving a steady Navier-Stokes equation with the side walls driving the
flow; see~\cite{AlexanderianPetraStadlerEtAl14} for details.  The discretized
equations give rise to a discretized linear solution operator for the forward problem, 
which is composed with an observation operator to extract the space-time observations. 
We denote this discretized forward operator by $\mat{F}$. 

\paragraph{The Bayesian inverse problem}
The inverse problem aims to use a vector of observed data $\obs$, which
consists of sensor measurements at discrete points in time, to reconstruct the
uncertain initial condition. The dimension of $\obs$, which we denote by $q$,
is given by the product of the number of sensors and the number of points in
time where observations are recored. In the present example, we use $35$
sensors and record measurements at $t = 1$, $t = 2$, and $t = 3.5$. Therefore,
we have $\obs \in \mathbb{R}^{N_\text{obs}}$, with $N_\text{obs} = 105$, and that $\mat{F}:\mathbb{R}^n
\to \mathbb{R}^{105}$. We use a Gaussian prior measure
$\GM{\dpar_0}{\mat{C}_0}$, and use an
additive Gaussian noise model. Following~~\cite{AlexanderianPetraStadlerEtAl14}, the prior covariance 
is chosen to be the discretized biharmonic operator. 
The solution of the Bayesian inverse problem is
the posterior measure, $\GM{\dpar_\text{post}}{\mat{C}_\text{post}}$ with
\[
\mat{C}_\text{post} = (\mat{F}^* \boldsymbol{\Gamma}_\text{noise}^{-1} \mat{F} + \mat{C}_0^{-1})^{-1},
\qquad
\dpar_\text{post} = \mat{C}_\text{post}(\mat{F}^* \boldsymbol{\Gamma}_\text{noise}^{-1} \obs + \mat{C}_0^{-1}\dpar_0),
\]
We denote, by $\mat{H} \equiv \mat{F}^* \boldsymbol{\Gamma}_\text{noise}^{-1} \mat{F}$, the Fisher information matrix.
In many applications $\mat{H}$ has a rapidly decaying spectrum; see Figure~\ref{fig:hessian} (left). Moreover,
in the present setup, the rank of $\mat{H}$ is bounded by the dimension of the observations, which in our example is given by $N_\text{obs} = 105$.
The prior-preconditioned Fisher information matrix 
\[
   \pH = \mat{C}_0^{1/2} \mat{H} \mat{C}_0^{1/2}, 
\] 
is also of importance in what follows. Notice that preconditioning of $\mat{H}$
by the prior, due to the smoothing properties of the priors employed in the
present example, results in a more rapid spectral decay; see
Figure~\ref{fig:hessian}(right). 

 We point out that the quantity $\trace(\pH)$
is related to the \emph{sensitivity criterion} in optimal experimental design (OED) theory~\cite{Ucinski05}.
On the other hand, $\log\det(\mat{I} + \pH)$ is related to Bayesian D-optimal
design criterion~\cite{ChalonerVerdinelli95}.  As shown
in~\cite{AlexanderianGloorGhattas15}, $\log\det(\mat{I} + \pH)$ is the expected
information gain from the posterior measure to the prior measure in a Bayesian
linear inverse problem with Gaussian prior and noise distributions, and with an
inversion parameter that belongs to a Hilbert space. Note that in the present context, 
information gain is quantified by the Kullback--Leibler divergence from posterior measure 
to prior measure. A detailed discussion of uncertainty measures is also provided in~\cite{SaibabaKitanidis15}.

In Figure~\ref{fig:error_estimates}, we report the error in approximation of
$\trace(\mh)$ and $\log\det(\mat{I} + \pH)$.  Both of these quantities are of
interest in theory OED, where one is
interested in measures of uncertainty in reconstructed
parameters~\cite{AtkinsonDonev92,Ucinski05}.  Such statistical measures are
then used to guide the experimental configurations used to collect experimental
data so as to maximize the statistical quality of the reconstructed/inferred parameters. 
Note that, in the present example, an experimental configuration is given by the
placement of sensors (black dots in Figure~\ref{fig:ad_dom} (left) where
concentration data is recorded). 
\begin{figure}\centering
\begin{tabular}{cc}
\includegraphics[width=0.4\textwidth]{./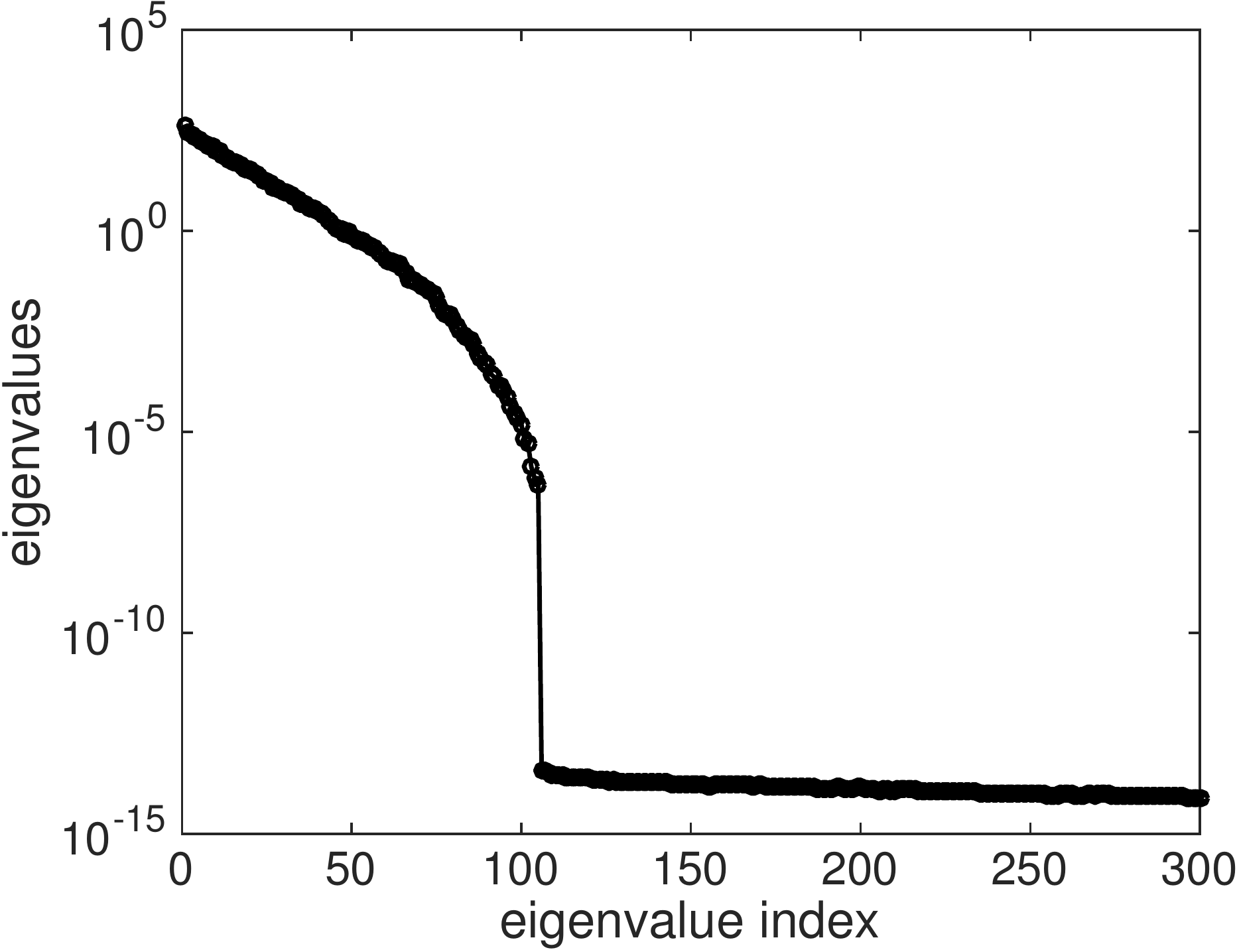}
&
\includegraphics[width=0.4\textwidth]{./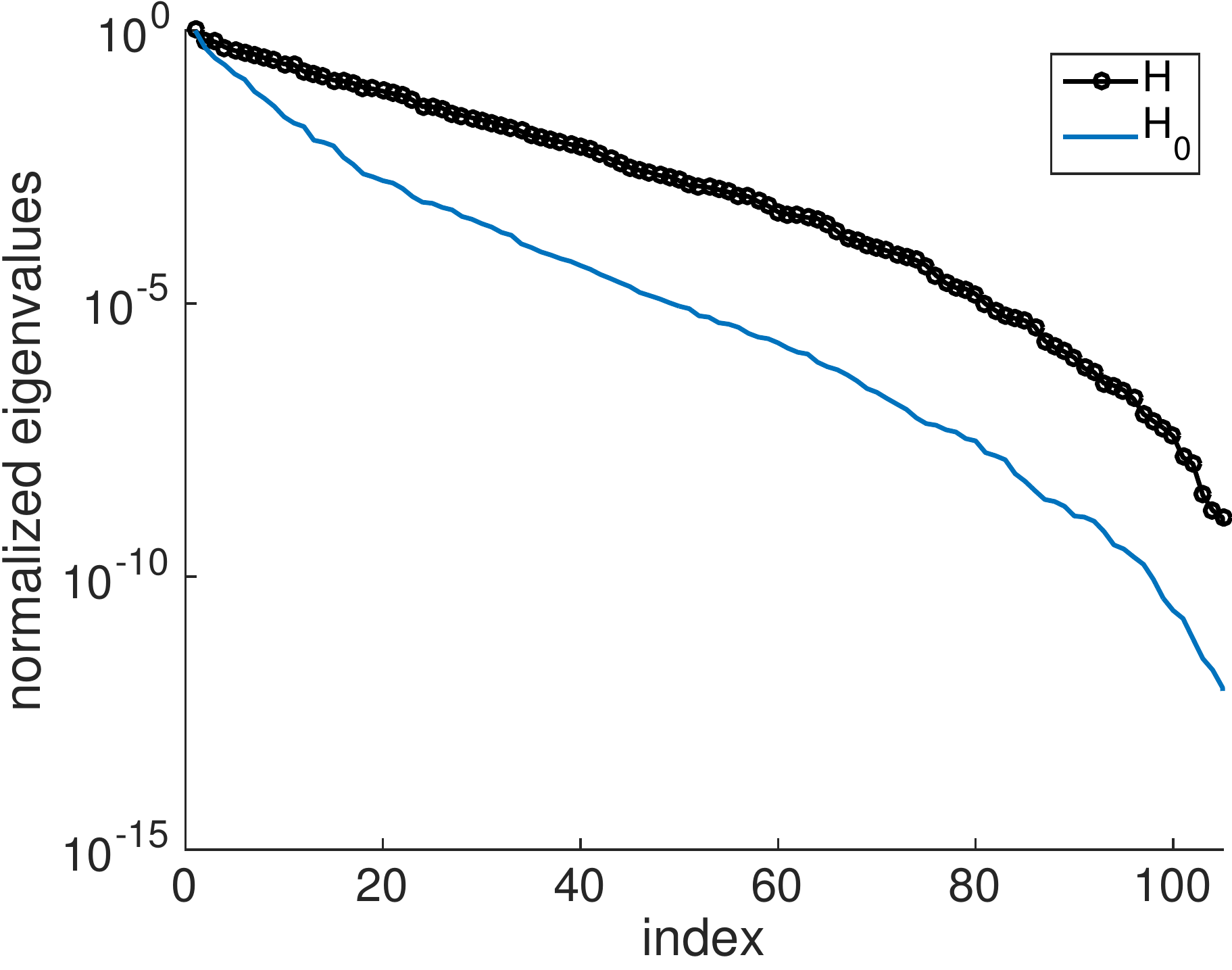}
\end{tabular}
\caption{Left: first $300$ eigenvalues of $\mat{H}$; right: normalized nonzero eigenvalues of $\mat{H}$ and $\pH$.}
\label{fig:hessian}
\end{figure}

\begin{figure}\centering
\begin{tabular}{cc}
\includegraphics[width=0.4\textwidth]{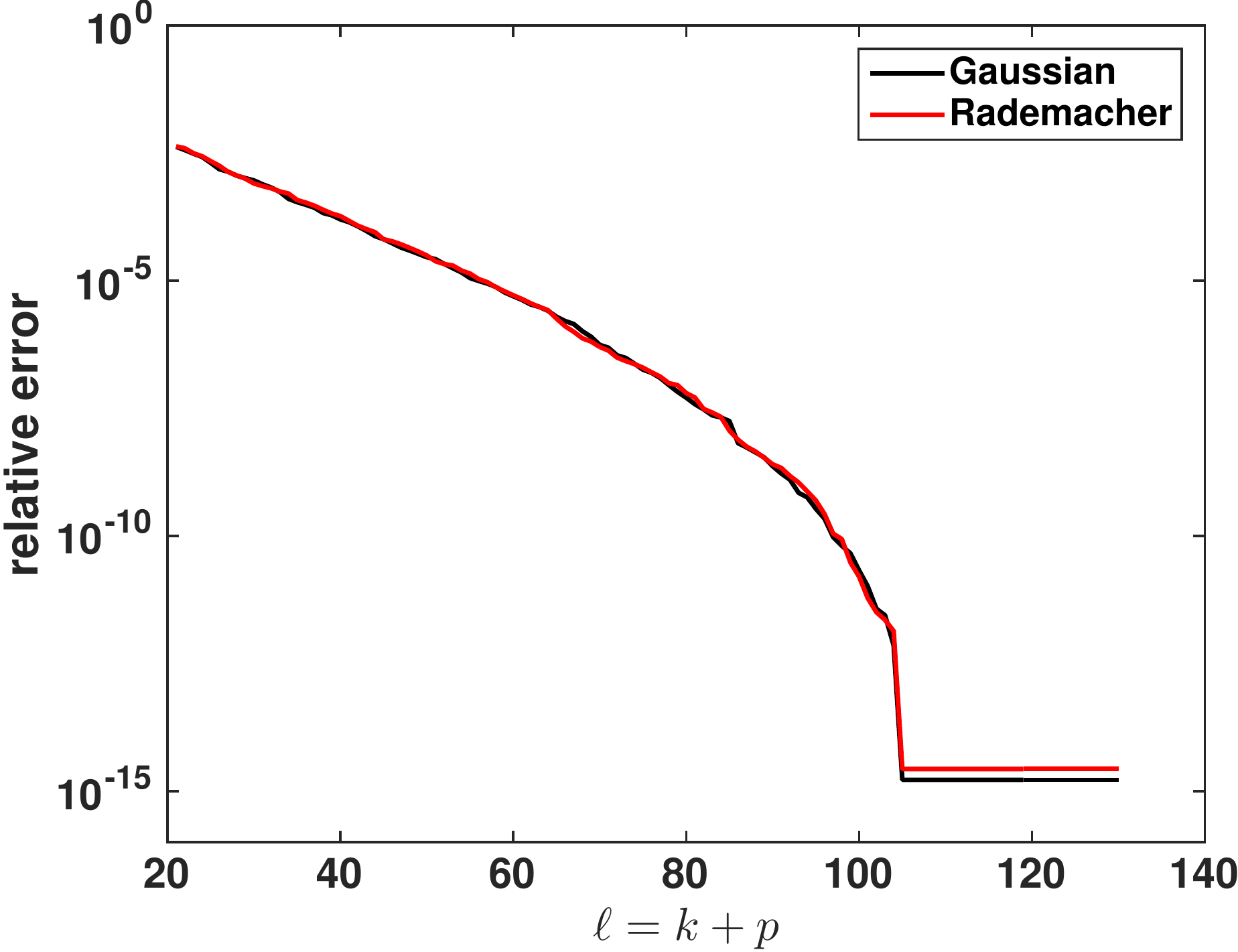}
&
\includegraphics[width=0.4\textwidth]{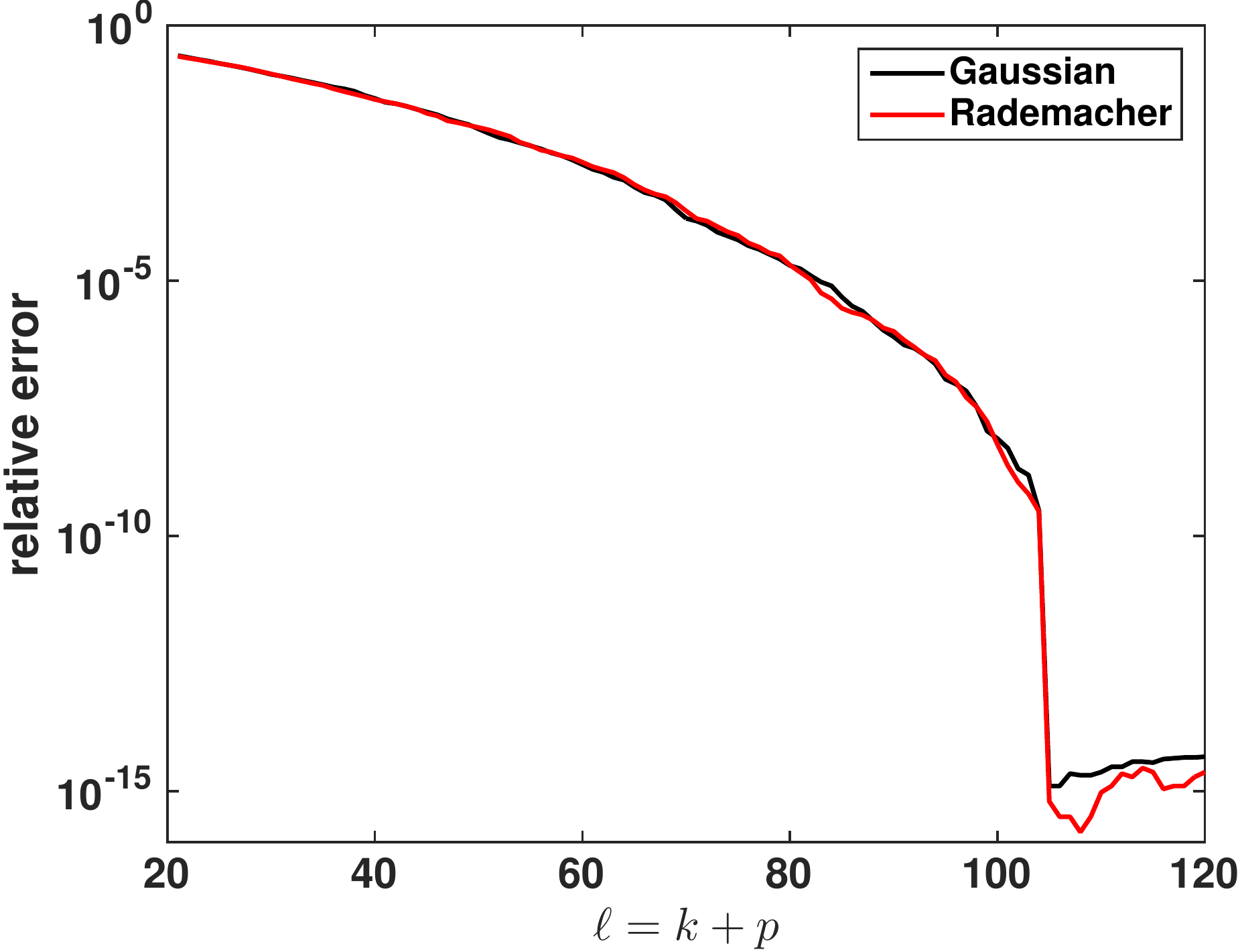}
\end{tabular}
\caption{Left: error in estimation of $trace(\pH)$; right: error in estimation of
$\log \det( \pH + \mat{I} )$. Computations were done using $p = 20$ and the randomized estimators use
$\ell = k + p$ random vectors with increasing values of $k$.}
\label{fig:error_estimates}
\end{figure}

\section{Conclusion}
We present randomized estimators for the
trace and log-determinant of implicitly defined Hermitian positive semi-definite matrices.
The estimators are low-rank approximations computed with subspace iteration.
We show, theoretically and numerically, that our estimators are effective for matrices
with a large eigenvalue gap or rapidly decaying eigenvalues.

Our error analyses for the estimators are cleanly separated 
into two parts: A structural analysis, which is applicable to any choice of a starting guess, paves
the way for a probabilistic analysis, in this case for Gaussian and
Rademacher starting guesses.  In addition, we derive asymptotic bounds on the
number of random vectors required to guarantee a specified accuracy with low
probability of failure.  We present comprehensive numerical experiments to
illustrate the performance of the estimators, and demonstrate their suitability 
for challenging application problems, such as the computation of the expected information
gain in a Bayesian linear inverse problem governed by a time-dependent PDE.

Future work will evolve around two main issues.

\paragraph{Rademacher random matrices.}
Our analysis implies that 
a Gaussian starting guess can do with a fixed oversampling parameter, 
while the oversampling amount for a Rademacher starting guess depends
on the dimension of the dominant eigenspace and the dimension of the matrix.  
However, the numerical experiments indicate that, for both types of starting guesses, 
an oversampling parameter of~$20$ leads to accurate  estimators.
We plan to further investigate estimators with 
Rademacher starting guesses, and specifically to derive  error bounds for 
the expectation of the corresponding estimators.
Another issue to be explored is the tightness of the bound $\ell \sim (k+\log n)\log k$ 
for Rademacher starting guesses.

\paragraph{Applications.}
We plan to integrate our estimators into computational methods
for large-scale uncertainty quantification. Our main goal is the
computation of optimal experimental designs (OED) for large-scale inverse
problems. This can be posed as an optimization problem, where the objective function is the trace 
or log-determinant of a high-dimensional operator. 
Due to their efficiency and high accuracy, we expect that our estimators
are well suited for OED.

\appendix
\section{Gaussian Random Matrices}\label{app_gauss}
In this section, we state a lemma on the pseudo-inverse of a rectangular Gaussian random matrix, and use this result to prove both parts of Lemma~\ref{p_2norm2}. 
\subsection{Pseudo-inverse of a Gaussian random matrix}
We state a result on the large deviation bound of the pseudo-inverse of a Gaussian random matrix~\cite[Proposition 10.4]{HMT09}.

\begin{lemma}\label{l_gauss_pinv} Let $\mg \in \mathbb{R}^{k\times(k+p)}$ be a random Gaussian matrix and let $p\geq 2$. For all $t \geq 1$, 
\begin{equation} 
\prob{\normtwo{\mg^\dagger} \geq \frac{e\sqrt{k+p}}{p+1}\cdot t} \leq t^{-(p+1)}. 
\end{equation}
\end{lemma}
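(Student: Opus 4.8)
The statement is precisely \cite[Proposition~10.4]{HMT09}, so the most economical route is simply to invoke it; what follows is a sketch of how I would argue it from scratch. The plan is to recast the claim as an anti-concentration estimate for the smallest singular value of $\mg$, to read that estimate off the eigenvalue density of the associated Wishart matrix, and to track the numerical constant with Stirling-type bounds. First I would reduce to singular values: putting $n\equiv k+p$, the $k\times n$ Gaussian matrix $\mg$ almost surely has full row rank (since $n>k$), so its singular values $\sigma_1\geq\cdots\geq\sigma_k$ are positive and $\normtwo{\mg^\dagger}=\sigma_k^{-1}$. Hence the lemma is equivalent to the bound $\prob{\sigma_k\leq s}\leq\bigl(e\sqrt{n}\,s/(p+1)\bigr)^{p+1}$ for $0<s\leq (p+1)/(e\sqrt{n})$; substituting $s=(p+1)/(e\sqrt{n}\,t)$ with $t\geq1$ reproduces the claimed tail $t^{-(p+1)}$, and the side condition on $s$ is automatic for $t\geq1$.

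Next I would pass to the Wishart matrix $\mathbf{W}\equiv\mg\mg^T$, whose eigenvalues are $\sigma_1^2\geq\cdots\geq\sigma_k^2$ and whose joint eigenvalue density is proportional to $\prod_i\lambda_i^{(p-1)/2}e^{-\lambda_i/2}\prod_{i<j}|\lambda_i-\lambda_j|$. The structural point is that, after integrating out the $k-1$ larger eigenvalues, the marginal density of $\lambda_{\min}=\sigma_k^2$ near the origin is bounded by a constant multiple of $\lambda^{(p-1)/2}$: the exponent comes from the factor $\lambda_k^{(p-1)/2}$, the Vandermonde contribution $\prod_{j<k}(\lambda_j-\lambda_k)$ stays bounded as $\lambda_k\to0^+$ with the other eigenvalues held fixed, and the Gaussian factor is at most $1$. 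Integrating this bound over $[0,\epsilon]$ gives $\prob{\sigma_k^2\leq\epsilon}\leq c_{k,p}\,\epsilon^{(p+1)/2}$, hence $\prob{\sigma_k\leq s}\leq c_{k,p}\,s^{p+1}$, which already exhibits the correct power of $s$; the constant $c_{k,p}$ is a ratio of Gamma functions arising from the normalizing constants of the integral, with $p+1=n-k+1$ acting as the effective number of degrees of freedom for $\lambda_{\min}$.

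The final step is to show $c_{k,p}\leq\bigl(e\sqrt{n}/(p+1)\bigr)^{p+1}$ via the Stirling lower bound $\Gamma(z+1)\geq\sqrt{2\pi z}\,(z/e)^z$. This is easy to verify in the base case $k=1$, where $\sigma_1^2$ is $\chi^2_n$ and $\prob{\sigma_1^2\leq\epsilon}\leq \epsilon^{n/2}/\bigl(2^{n/2}\Gamma(n/2+1)\bigr)$, so that $c_{1,p}=1/\bigl(2^{(p+1)/2}\Gamma((p+1)/2+1)\bigr)$ and the Stirling bound leaves ample room (the surviving prefactor is of order $1/\sqrt{2\pi(p+1)}\leq1$). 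For general $k$ the computation is the same in spirit, using either the exact marginal density of the smallest Wishart eigenvalue or a recursion in $k$.

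I expect this constant bookkeeping for general $k$ to be the only real obstacle: obtaining (or estimating) the marginal density of $\lambda_{\min}$ and then checking that the Gamma-function normalizer is dominated by $\bigl(e\sqrt{k+p}/(p+1)\bigr)^{p+1}$. Everything else — the reduction to $\sigma_k$, the near-origin exponent count, and the passage from $\sigma_k^2$ to $\sigma_k$ — is routine. The hypothesis $p\geq2$ guarantees that the exponent $(p-1)/2$ is positive with room to spare, so all integrals converge and the density bound near the origin is legitimate; it is also the regime in which Lemma~\ref{p_2norm2} later pairs this tail estimate with a finite second moment.
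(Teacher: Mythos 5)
Your proposal matches the paper exactly: the paper gives no proof of this lemma and simply cites \cite[Proposition 10.4]{HMT09}, which is precisely your primary route. Your supplementary from-scratch sketch (reduction to the smallest singular value, the near-origin exponent from the Wishart eigenvalue density, and Stirling bookkeeping for the constant) is moreover the same strategy underlying the cited result, so nothing further is needed.
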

\subsection{Proof of Lemma~\ref{p_2norm2}}
\begin{proof}
 From~\cite[Corollary 5.35]{vershynin2012introduction} we have 
\[ \prob{\normtwo{\mg_2} > \sqrt{n-k} + \sqrt{k+p} + t} \leq \exp(-t^2/2). \]
Recall from~\eqref{e_mu} $\mu = \sqrt{n-k} + \sqrt{k+p}$. From the \textit{law of the unconscious statistician}~\cite[Proposition S4.2]{gu2015subspace}, 
\begin{align*}
\expect{\normtwo{\mg_2}^2}  = &\>  \int_0^\infty 2t \prob{\normtwo{\mg_2} > t} dt\\
\leq & \> \int_0^{\mu} 2t dt + \int_{\mu}^\infty  2t \prob{\normtwo{\mg_2} > t} dt \\
\leq & \> \mu^2 + \int_{0}^\infty 2(u+\mu) \exp(-u^2/2)du 
=  \> \mu^2 + 2\left(1+ \mu\sqrt{\frac{\pi}{2}}\right).
\end{align*}•
This concludes the proof for~\eqref{2norm2}. 

Next consider~\eqref{pinv2norm2}. Using Lemma~\ref{l_gauss_pinv}, we have for $t  > 0$ 
\begin{equation}
\prob{\normtwo{\mg_1^\dagger} \geq t} \leq\> D t^{-(p+1)} \qquad D \define \> \frac{1}{\sqrt{2\pi(p+1)}}\left( \frac{e\sqrt{k+p}}{p+1}\right). 
\end{equation}
As before, we have 
\begin{align*}
\expect{\normtwo{\mg_1^{\dagger}}^2}  = &\quad  \int_0^\infty 2t \prob{\normtwo{\mg_1^\dagger} > t} dt\\
\leq & \quad  \int_0^{\beta} 2t dt + \int_{\beta}^\infty  2t \prob{\normtwo{\mg_1^\dagger} > t} dt\\\
\leq & \quad \beta^2 + \int_{\beta}^\infty  2t D t^{-(p+1)} dt = \beta^2 + 2D\frac{\beta^{1-p}}{p-1}.
 \end{align*}•
Minimizing w.r.t. $\beta$, we get $\beta = (D)^{1/(p+1)}$. Substitute this value for $\beta$ and simplify. \qed
\end{proof}

\section{Rademacher Random Matrices}~\label{app_rad}
In this section, we state the matrix Chernoff inequalities~\cite{TroppBook} and other useful 
concentration inequalities and use these results to prove Theorem~\ref{t_rad}.
\subsection{Useful concentration inequalities}
The proof of Theorem~\ref{t_rad} relies on the matrix concentration inequalities developed in~\cite{TroppBook}. 
We will need the following result~\cite[Theorem 5.1.1]{TroppBook} in what follows.
\begin{theorem}[Matrix Chernoff]\label{t_mat_chern}
Let $\{\mx_k\}$ be  finite sequence of independent, random, $d\times d$ Hermitian matrices. Assume that $0 \leq \lambda_{\min}(\mx_k)$ and $\lambda_{\max}(\mx_k) \leq L$ for each index $k$. Let us define 
\[\mu_{\min} \equiv \lambda_{\min}\left(\sum_k\expect{\mx_k} \right) \qquad \mu_{\max} \equiv \lambda_{\max}\left(\sum_k\expect{\mx_k} \right),\]
and let $g(x) \equiv e^x(1+x)^{-(1+x)}.$ Then for any $\epsilon > 0$
 \[ \prob{\lambda_{\max}\left(\sum_k \mx_k \right) \geq (1+\epsilon)\mu_{\max}} \leq d g(\epsilon)^{\mu_{\max}/L}, \]
and for any $0 \leq \epsilon <1 $
\[ \prob{\lambda_{\min}\left(\sum_k \mx_k \right) \leq (1-\epsilon)\mu_{\min}} \leq d g(-\epsilon)^{\mu_{\min}/L}. \]
\end{theorem}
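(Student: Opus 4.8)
The plan is to prove the Matrix Chernoff inequalities (Theorem~\ref{t_mat_chern}) by the standard Laplace transform / matrix moment generating function method, following Tropp. First I would establish a Chernoff-type tail inequality for the maximum eigenvalue: for any $\lambda > 0$ and any random Hermitian matrix $\mathbf{Y} = \sum_k \mx_k$,
\[
\prob{\lambda_{\max}(\mathbf{Y}) \geq t} \leq e^{-\lambda t}\,\expect{\trace e^{\lambda \mathbf{Y}}},
\]
which comes from Markov's inequality applied to $\lambda_{\max}$ via the estimates $e^{\lambda \lambda_{\max}(\mathbf{Y})} \leq \trace e^{\lambda \mathbf{Y}}$ (since eigenvalues of $e^{\lambda\mathbf Y}$ are nonnegative) and monotonicity of $x\mapsto e^{\lambda x}$.

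The main analytic ingredient is the subadditivity of matrix cumulant generating functions, which I would invoke in the form: for independent Hermitian $\mx_k$,
\[
\expect{\trace e^{\sum_k \lambda \mx_k}} \leq \trace \exp\!\Bigl(\sum_k \log \expect{e^{\lambda \mx_k}}\Bigr).
\]
This is a consequence of the Lieb concavity theorem (the function $\mathbf A \mapsto \trace \exp(\mathbf H + \log \mathbf A)$ is concave on positive definite $\mathbf A$) combined with Jensen's inequality applied iteratively over the independent summands. I would treat this as a black box since it is the deep part and is fully developed in~\cite{TroppBook}.

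Next I would bound each matrix MGF using the operator convexity structure of the exponential on the eigenvalue range $[0,L]$. Because $e^{\lambda x} \leq 1 + \frac{e^{\lambda L}-1}{L}\,x$ for $x\in[0,L]$ (the chord bound, valid by convexity of $e^{\lambda x}$), the transfer rule for matrix functions gives $\expect{e^{\lambda \mx_k}} \preceq \mi + \frac{e^{\lambda L}-1}{L}\expect{\mx_k}$, hence $\log \expect{e^{\lambda \mx_k}} \preceq \frac{e^{\lambda L}-1}{L}\expect{\mx_k}$ using $\log(1+a)\le a$ in the operator sense together with operator monotonicity of $\log$. Summing, $\sum_k \log\expect{e^{\lambda\mx_k}} \preceq \frac{e^{\lambda L}-1}{L}\sum_k\expect{\mx_k}$, and applying $\lambda_{\max}$-monotonicity of $\trace\exp(\cdot)$ yields
\[
\expect{\trace e^{\lambda\mathbf Y}} \leq d\,\exp\!\Bigl(\tfrac{e^{\lambda L}-1}{L}\,\mu_{\max}\Bigr).
\]
Combining with the Laplace bound gives $\prob{\lambda_{\max}(\mathbf Y)\ge t} \le d\,\exp\!\bigl(-\lambda t + \tfrac{e^{\lambda L}-1}{L}\mu_{\max}\bigr)$; setting $t=(1+\epsilon)\mu_{\max}$ and optimizing over $\lambda>0$ (the optimum is $\lambda = \frac1L\log(1+\epsilon)$) produces the factor $g(\epsilon)^{\mu_{\max}/L}$ with $g(x)=e^x(1+x)^{-(1+x)}$. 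The lower tail is entirely analogous: apply the same argument to $-\mathbf Y$ (equivalently bound $\lambda_{\min}$ via $e^{-\lambda\lambda_{\min}(\mathbf Y)}\le\trace e^{-\lambda\mathbf Y}$), use the chord bound $e^{-\lambda x}\le 1 - \frac{1-e^{-\lambda L}}{L}x$ on $[0,L]$, and optimize with $\lambda=-\frac1L\log(1-\epsilon)$ for $0\le\epsilon<1$ to obtain $g(-\epsilon)^{\mu_{\min}/L}$.

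The main obstacle is the master tail bound built on matrix CGF subadditivity: it rests on Lieb's concavity theorem, which is genuinely nontrivial and which I would not reprove here but cite from~\cite{TroppBook}. The remaining steps—the Laplace transform reduction, the scalar chord estimates transferred via the monotonicity/transfer rules for matrix functions, and the elementary calculus optimization in $\lambda$—are routine once that tool is in hand.
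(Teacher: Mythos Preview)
Your outline is a correct sketch of the standard proof of the matrix Chernoff inequality as developed in Tropp's monograph, but note that the paper itself does not prove Theorem~\ref{t_mat_chern}: it is stated without proof and attributed directly to~\cite[Theorem 5.1.1]{TroppBook}. So there is no ``paper's own proof'' to compare against; the theorem is used as a black-box tool in the proof of Theorem~\ref{t_rad}. Your proposal in fact reconstructs the argument from the cited source (Laplace transform reduction, subadditivity of matrix cumulant generating functions via Lieb concavity, the chord bound on $[0,L]$, and optimization in $\lambda$), which is exactly the right provenance, and there are no gaps in your outline beyond the acknowledged reliance on Lieb's theorem.
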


The following result was first proved by Ledoux~\cite{ledoux1997talagrand} but we reproduce the statement from~\cite[Proposition 2.1]{tropp2011improved}.
\begin{lemma}\label{l_rad_func} Suppose $f:\mathbb{R}^n \to \mathbb{R}$ is a 
convex function that satisfies the following Lipschitz bound 
\[ |f(\vx)-f(\vy)| \leq L \normtwo{\vx-\vy} \qquad \text{for all }\>\vx,\vy \in \mathbb{R}^n. \]
Let $\vz \in \mathbb{R}^n$ be a random vector with entries drawn from an i.i.d.\ Rademacher distribution. 
Then, for all $t \geq 0$, 
\[
   \prob{f(\vz) \geq \expect{f(\vz)} + Lt } \leq e^{-t^2/8}.
\]
\end{lemma}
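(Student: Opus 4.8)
\textbf{Proof plan for Lemma~\ref{l_rad_func}.}
The plan is to reduce the stated bound to the standard concentration-of-measure phenomenon for convex Lipschitz functions on the Boolean cube, which is exactly Talagrand's convex-distance inequality as specialized by Ledoux. Since we are told to reproduce the statement from \cite{tropp2011improved}, the proof here should be a short derivation from the known scalar concentration results rather than a from-scratch argument. First I would recall the martingale/transportation form: for a vector $\vz\in\{\pm1\}^n$ with i.i.d.\ Rademacher entries and any $1$-Lipschitz convex function $g$, one has $\prob{g(\vz)\geq \expect{g(\vz)}+t}\leq e^{-t^2/8}$; this is \cite[Corollary 4.10]{ledoux1997talagrand} (or the equivalent statement in Ledoux's book on concentration). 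The factor $8$ (rather than the $2$ one would get in the Gaussian case) comes from the fact that the subgaussian constant of a single Rademacher coordinate difference, measured in the relevant metric, is $2$, and the martingale-difference sum then contributes a factor of $4$ inside the exponent.

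The key steps, in order, would be: (1) rescale, setting $g(\vx)\equiv f(\vx)/L$, so that $g$ is convex and $1$-Lipschitz with respect to the Euclidean norm on $\mathbb{R}^n$; the claimed bound for $f$ at level $Lt$ is then equivalent to the bound for $g$ at level $t$. (2) Restrict attention to the cube: although $f$ is defined on all of $\mathbb{R}^n$, the random vector $\vz$ is supported on $\{\pm1\}^n$, so only the values of $g$ on the vertices matter; convexity of $g$ on $\mathbb{R}^n$ is what allows the use of the convex-Lipschitz concentration inequality (it is essential — the inequality is false for general Lipschitz functions on the cube with this Gaussian-type tail and an absolute Lipschitz constant). (3) Invoke Ledoux's inequality for convex Lipschitz functions of independent bounded random variables: for independent $z_i$ taking values in $[-1,1]$ (in particular Rademacher) and $g$ convex and $1$-Lipschitz, $\prob{g(\vz)-\mathbb{M}\, g(\vz)\geq t}\leq 2e^{-t^2/8}$ around the median, and an analogous bound around the mean with the stated constant; the version used in \cite[Proposition~2.1]{tropp2011improved} is precisely the mean-centered bound $\prob{g(\vz)\geq \expect{g(\vz)}+t}\leq e^{-t^2/8}$. (4) Substitute back $t\mapsto Lt$ and $g\mapsto f/L$ to recover the statement as written. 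One should also note monotonicity/continuity so that the inequality, proved for $t>0$, extends to $t\geq0$ trivially (at $t=0$ it reads $\prob{f(\vz)\geq\expect{f(\vz)}}\leq 1$).

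The main obstacle, such as it is, is not a computation but a citation/bookkeeping issue: the literature states this inequality with several slightly different constants ($e^{-t^2/2}$, $e^{-t^2/8}$, $2e^{-t^2/8}$, median vs.\ mean centering), and one must be careful to quote the exact form with constant $8$ and mean centering that matches \cite{tropp2011improved} and that is actually used later in Appendix~\ref{app_rad}. Concretely, the subtlety is tracking how the one-sided mean-centered version follows from the two-sided median-centered version (the median and mean differ by at most $O(1)$, which is absorbed into the constant, but one wants the clean $e^{-t^2/8}$ without an extra additive shift); the cleanest route is to cite Ledoux's inequality already in mean-centered one-sided form rather than re-deriving it, which is the approach \cite{tropp2011improved} takes and which we simply follow here.
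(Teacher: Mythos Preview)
Your proposal is correct and matches the paper's approach: the paper does not prove this lemma at all but simply cites it as \cite[Proposition~2.1]{tropp2011improved}, originally due to Ledoux~\cite{ledoux1997talagrand}. Your rescaling-and-cite plan is therefore exactly in line with (indeed more detailed than) what the paper does; the extra steps you outline (normalize by $L$, restrict to the cube, invoke the mean-centered one-sided Ledoux inequality) are the right sanity checks but are not needed in the text.
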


\begin{lemma}\label{l_rad_vtw} Let $\mv$ be a $n\times r$ matrix with orthonormal columns and let $n\geq r$. Let $\vz $ be an $n\times 1$ vector with entries drawn from an i.i.d.\ Rademacher distribution. Then, for $0 < \delta < 1$, 
\[ \prob{\normtwo{\mv^*\vz} \geq \sqrt{r} + \sqrt{8\log\left(\frac{1}{\delta}\right)}} \leq \delta. \] 
\end{lemma}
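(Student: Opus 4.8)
The plan is to apply Lemma~\ref{l_rad_func} to the function $f:\real^n\to\real$ defined by $f(\vx)\equiv\normtwo{\mv^*\vx}$. First I would check that $f$ satisfies the hypotheses of that lemma. Convexity is immediate, since $f$ is the composition of the (convex) Euclidean norm with the linear map $\vx\mapsto\mv^*\vx$. For the Lipschitz constant, the reverse triangle inequality gives $|f(\vx)-f(\vy)|=\bigl|\normtwo{\mv^*\vx}-\normtwo{\mv^*\vy}\bigr|\leq\normtwo{\mv^*(\vx-\vy)}\leq\normtwo{\mv^*}\,\normtwo{\vx-\vy}$, and since $\mv$ has orthonormal columns, $\normtwo{\mv^*}=1$. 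Hence $f$ is $1$-Lipschitz, so Lemma~\ref{l_rad_func} applies with $L=1$.

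The next step is to bound $\expect{f(\vz)}$. By Jensen's inequality, $\expect{f(\vz)}=\expect{\normtwo{\mv^*\vz}}\leq\sqrt{\expect{\normtwo{\mv^*\vz}^2}}$. Now compute the second moment: $\expect{\normtwo{\mv^*\vz}^2}=\expect{\vz^*\mv\mv^*\vz}=\trace\!\bigl(\mv\mv^*\expect{\vz\vz^*}\bigr)=\trace\!\bigl(\mv\mv^*\mi_n\bigr)=\trace(\mv^*\mv)=\trace(\mi_r)=r$, where I used that the entries of $\vz$ are i.i.d.\ Rademacher, so $\expect{\vz\vz^*}=\mi_n$. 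Therefore $\expect{f(\vz)}\leq\sqrt{r}$.

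Finally, apply Lemma~\ref{l_rad_func} with $L=1$ and $t=\sqrt{8\log(1/\delta)}\geq 0$, which yields
\[
\prob{f(\vz)\geq\expect{f(\vz)}+\sqrt{8\log(1/\delta)}}\leq e^{-t^2/8}=e^{-\log(1/\delta)}=\delta.
\]
Since $\expect{f(\vz)}\leq\sqrt{r}$, the event $\{f(\vz)\geq\sqrt{r}+\sqrt{8\log(1/\delta)}\}$ is contained in the event on the left above, so its probability is at most $\delta$, which is the claim.

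I do not anticipate a genuine obstacle here: the only mildly substantive step is the expectation bound, and even that reduces to Jensen's inequality together with the elementary trace computation $\expect{\normtwo{\mv^*\vz}^2}=r$. The verification that $f$ is convex and $1$-Lipschitz is routine, and the choice $t=\sqrt{8\log(1/\delta)}$ is exactly what makes the Gaussian-type tail $e^{-t^2/8}$ equal to $\delta$.
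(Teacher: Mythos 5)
Your proof is correct and follows essentially the same route as the paper: apply the Ledoux concentration inequality (Lemma~\ref{l_rad_func}) to the convex $1$-Lipschitz function $f(\vx)=\normtwo{\mv^*\vx}$, bound $\expect{f(\vz)}\leq\sqrt{\expect{f(\vz)^2}}=\normf{\mv}=\sqrt{r}$, and choose $t=\sqrt{8\log(1/\delta)}$. The only cosmetic difference is that the paper invokes H\"older's inequality where you cite Jensen's, and computes $\expect{f(\vz)^2}$ as $\normf{\mv}^2$ rather than via the explicit trace calculation; these are the same estimate.
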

 \begin{proof}
Our proof follows the strategy in~\cite[Lemma 3.3]{tropp2011improved}. 
Define the function $f(\vx) = \normtwo{\mv^*\vx}$. We observe that $f$ satisfies the
assumptions of Lemma~\ref{l_rad_func}, with Lipschitz constant $L=1$; the latter follows from 
\[|\normtwo{\mv^*\vx}-\normtwo{\mv^*\vy}| \leq \normtwo{\mv^*(\vx-\vy)} \leq \normtwo{\vx-\vy}. \]
Furthermore, using H\"{o}lder's inequality 
\[ \expect{f(\vz)} \leq [\expect{f(\vz)^2}]^{1/2} = \| \mv\|_F = \sqrt{r}.\]
Using Lemma~\ref{l_rad_func} with $t_\delta = \sqrt{8\log\left(1/\delta\right)}$ 
we have 
\[
   \prob{f(\vz) \geq \sqrt{r} + t_\delta} \leq \prob{f(\vz) \geq \expect{f(\vz)} + t_\delta} 
\leq e^{-t_\delta^2/8} = \delta.~\qed
\]
\end{proof}
\begin{lemma}\label{l_max}
Let $X_i$ for $i=1,\dots,n$ be a sequence of i.i.d.\ random variables. If for each $i=1,\dots,n$,  $\prob{X_i \geq a} \leq \xi$ holds, where $\xi \in (0,1]$, then 
\[ \prob{\max_{i=1,\cdots,n} X_i \geq a} \leq n\xi. \]
\end{lemma}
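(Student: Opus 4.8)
The plan is to recognize that this is simply the union bound in disguise. The key observation is the set identity
\[
\Bigl\{\max_{i=1,\dots,n} X_i \geq a\Bigr\} \;=\; \bigcup_{i=1}^n \{X_i \geq a\},
\]
which holds because the maximum of the $X_i$ is at least $a$ precisely when at least one of the $X_i$ is at least $a$. First I would state and justify this identity (the forward inclusion: if the max is $\geq a$, the index achieving it satisfies $X_i \geq a$; the reverse inclusion: if some $X_i \geq a$, then the max is at least that value).

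Next I would apply countable (here, finite) subadditivity of the probability measure to the right-hand side:
\[
\prob{\bigcup_{i=1}^n \{X_i \geq a\}} \;\leq\; \sum_{i=1}^n \prob{X_i \geq a}.
\]
Finally, I would bound each summand by the hypothesis $\prob{X_i \geq a} \leq \xi$, giving $\sum_{i=1}^n \prob{X_i \geq a} \leq n\xi$, and chain the inequalities to conclude.

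There is essentially no obstacle here: the result does not even use independence or identical distribution of the $X_i$ (those hypotheses are stated only because that is how the lemma is invoked in Appendix~\ref{app_rad}), and it does not use $\xi \leq 1$ except to make the bound potentially nontrivial. The only point worth a sentence of care is the set identity above, which should be spelled out so the union bound applies cleanly; everything after that is a one-line computation.
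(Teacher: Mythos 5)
Your proof is correct, but it takes a genuinely different route from the paper's. You use the union bound: writing $\{\max_i X_i \geq a\} = \bigcup_i \{X_i \geq a\}$ and invoking finite subadditivity gives $n\xi$ directly, with no use of independence. The paper instead passes to the complement, uses independence to factor $\prob{\max_i X_i < a} = \prod_i \prob{X_i < a} \geq (1-\xi)^n$, and then applies Bernoulli's inequality $(1-\xi)^n \geq 1 - n\xi$ to arrive at the same bound. Your observation that independence is not actually needed is accurate and is the main thing your approach buys: the lemma holds for arbitrary (possibly dependent) random variables. What the paper's route buys is the sharper intermediate bound $1 - (1-\xi)^n \leq n\xi$, which is strictly tighter when $\xi > 0$ and $n \geq 2$; the paper then deliberately relaxes it to the simpler linear form $n\xi$ for use downstream. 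Since only the $n\xi$ form is ever used in Appendix B, your more elementary argument would serve equally well there.
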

\begin{proof}
 Since $ \prob{X_i \geq a} \leq \xi$ then $\prob{X_i < a} \geq 1 - \xi $. We can bound
\begin{align*}
\prob{\max_{i=1,\cdots,n} X_i \geq a}  = & \left(1-\prob{\max_{i=1,\cdots,n} X_i < a}\right) \\
= & \left(1- \prod_{i=1}^n\prob{ X_i <a}\right) \leq 1 - (1-\xi)^n. 
\end{align*} 
The proof follows from Bernoulli's inequality~\cite[Theorem 5.1]{stirling2009mathematical} which states $(1-\xi)^n \geq 1 - n\xi$ for $\xi \in [0,1]$ and $n\geq 1$.~\qed
\end{proof}
\subsection{Proof of Theorem~\ref{t_rad}}
\begin{proof}
Recall that $\mom_1 = \mU^*_1 \mom$ and $\mom_2 = \mU_2^*\mom$ where $\mom$ is random matrix with entries chosen from an i.i.d.\ Rademacher distribution. The proof proceeds in three steps.  
\paragraph{1. Bound for $\normtwo{\mom_2}^2$}
The proof uses the matrix Chernoff concentration inequality. Let $\vomega_i \in \mathbb{R}^{n\times 1}$ be the $i$-th column of $\mom$. Note $  \mom_2\mom_2^* \in \mathbb{C}^{(n-k)\times (n-k)}$ and 
\[ \expect{\mom_2\mom_2^*} = \sum_{i=1}^\ell \mU_2^*\expect{\vomega_i\vomega_i^*}\mU_2 = \ell \mi_{n-k}. \]
Furthermore, define $\mu_\text{min} ( \mom_2\mom_2^*) \equiv \lambda_\text{min} (\expect{ \mom_2\mom_2^*})$ and $\mu_\text{max} ( \mom_2\mom_2^*) \equiv \lambda_\text{max} (\expect{ \mom_2\mom_2^*})$. Clearly $\mu_\text{min} = \mu_\text{max} = \ell $. 
Note that here we have expressed $ \mom_2\mom_2^*$ as a finite sum of $\ell$ rank-1 matrices, each with a single nonzero  eigenvalue $\vomega_i^* \mU_2\mU_2^*\vomega_i $. We want to obtain a probabilistic bound for the maximum eigenvalue i.e., $L_2 = \max_{i=1,\cdots,\ell} \normtwo{\mU_2^*\vomega_i}^2$. Using Lemma~\ref{l_rad_vtw} we can write with probability at most $e^{-t^2/8}$ 
\[ \left(\sqrt{n-k} + t\right)^2 \leq \normtwo{\mU_2^*\vomega_i}^2  =  \vomega_i^* \mU_2\mU_2^*\vomega_i .   \]

Since $\normtwo{\mU_2^*\vomega_i}^2$ are i.i.d., applying Lemma~\ref{l_max} gives 
\begin{equation*}\prob{\max_{i=1,\cdots,\ell} \normtwo{\mU_2^*\vomega_i} \geq \sqrt{n-k} + t } \leq \ell e^{-t^2/8}.\end{equation*}
Take $t = \sqrt{ 8\log(4\ell/\delta)}$ to obtain %
\begin{equation}\label{e_l2} \prob{L_2 \geq C_u^2} \leq \delta/4, \qquad C_u \equiv \sqrt{n-k} + \sqrt{ 8\log\left(\frac{4\ell}{\delta}\right)}. 
\end{equation}

The matrix $\mom_2$ satisfies the conditions of the matrix Chernoff theorem~\ref{t_mat_chern}; for $\eta \geq 0$ we have 
\[ \prob{\lambda_\text{max}(\mom_2\mom_2^*)  \geq (1+\eta) \ell } \leq (n-k) g(\eta)^\frac{\ell}{L_2},  \]
where the function $g(\eta)$ is defined in Theorem~\ref{t_mat_chern}. For $\eta > 1$ the Chernoff bounds can be simplified~\cite[Section 4.3]{mitzenmacher2005probability} since $g(\eta) \leq e^{-\eta/3}$, to obtain 
\[ \prob{\lambda_\text{max}(\mom_2\mom_2^*)  \geq (1+\eta) \ell } \leq (n-k) \exp\left(-\frac{\eta\ell}{3L_2}\right). \]

Choose the parameter $$\eta_\delta = C_{\ell,\delta}C_u^2 = 3\ell^{-1}C_u^2\log\left(\frac{4(n-k)}{\delta}\right),$$
so that 
\begin{align*}
\prob{\normtwo{\mom_2}^2 \geq (1+\eta_\delta)\ell} \leq & \> (n-k)\exp\left(-\frac{C_u^2}{L_2}\log\frac{4(n-k)}{\delta}\right) \\
= & \> (n-k)\left(\frac{\delta}{4(n-k)}\right)^{C_u^2/L_2}. 
\end{align*}

Finally, we want to find a lower bound for $\normtwo{\mom_2}^2$. Define the events 
\[
 A =  \>\left\{\mom_2 \mid L_2 <   C_u^2 \right\}, \qquad 
 B =  \>\left\{ \mom_2 \mid \normtwo{\mom_2}^2 \geq (1+\eta_\delta)\ell \right\}. 
\]
Note that $\prob{A^c} \leq \delta/4$ and under event $A$ we have  $C_u^2 >L_2$ so that 
$$\prob{B\mid A}  \leq (n-k)\left(\frac{\delta}{4(n-k)}\right)^{C_u^2/L_2} \leq  \delta/4.$$  
Using the law of total probability 
\begin{align*}\prob{B} =& \> \prob{B\mid A}\prob{A} + \prob{B\mid A^c} \prob{A^c} \\
\leq & \> \prob{B\mid A} + \prob{A^c}, 
\end{align*} 
we can obtain a bound for $\prob{B}$ as
\[ \prob{\normtwo{\mom_2}^2 \geq \ell\left(1 + C_u^2 C_{\ell,\delta}\right)  } \leq \delta/2.\]

\paragraph{2. Bound for $\normtwo{\mom_1^\dagger}^2$} 
The steps are similar and we again use the matrix Chernoff concentration
inequality. Consider $\mom_1\mom_1^*\in \mathbb{C}^{k\times k}$, and
as before, write this matrix as the sum of rank-1 matrices to obtain 
\[ 
\expect{\mom_1\mom_1^*} = \sum_{i=1}^\ell
\mU_1^*\expect{\vomega_i\vomega_i^{*}}\mU_1 = \ell \mi_{k}, \] and
$\mu_\text{min} (\mom_1\mom_1^*) = \ell$. Each summand in the above decomposition of 
$\mom_1\mom_1^*$ has one nonzero eigenvalue
$\vomega_i^* \mU_1\mU_1^*\vomega_i $. Following the same strategy as in Step 1, we define $L_1 \equiv
\max_{i=1,\dots,\ell}\normtwo{\mU_1^*\vomega_i}^2 $ and apply
Lemma~\ref{l_rad_vtw} to obtain \begin{equation*} \prob{\max_{i=1,\cdots,\ell}
\normtwo{\mU_1^*\vomega_i} \geq \sqrt{k} + t } \leq \ell e^{-t^2/8} \leq
ne^{-t^2/8}.\end{equation*}

Take $t = \sqrt{ 8\log(4n/\delta)}$ to obtain   
\begin{equation}\label{e_l1} 
\prob{ L_1 \geq C_l^2} \leq \delta/4, \qquad C_l \equiv \sqrt{k} + \sqrt{ 8\log\left(\frac{4n}{\delta}\right)}. 
\end{equation}

A straightforward application of the Chernoff bound in Theorem~\ref{t_mat_chern} gives us 

\[ {\prob{\lambda_\text{min}(\mom_1\mom_1^*)  \leq (1-\rho) \ell } \leq k g(-\rho)^\frac{\ell}{L_1}.  }\]
 Next, observe that $-\log g(-\rho)$ has the Taylor series expansion in the region $0 < \rho < 1$ 
\[ -\log g(-\rho) = {\rho} + (1-\rho)\log(1-\rho)  = \frac{\rho^2}{2} +  \frac{\rho^3}{6} +  \frac{\rho^4}{12} + \dots   \]
so that $-\log g(-\rho) \geq \rho^2/2$ for $0 < \rho < 1$ or $g(-\rho) \leq e^{-\rho^2/2}$. This gives us
\begin{equation}\label{e_chern2} \prob{\normtwo{\mom_1^\dagger}^2  \geq  \frac{1}{ (1-\rho) \ell } }\>  \leq \> k  \exp\left(-\frac{\rho^2\ell}{2L_1}\right) , \end{equation}
where we have used $\lambda_\text{min}(\mom_1\mom_1^*) = 1/\normtwo{\mom_1^\dagger}^2$ assuming $\rank(\mom_1) = k$.

With  the number of samples as defined in Theorem~\ref{t_rademacher} 
\[ \ell \geq 2\rho^{-2}C_l^2 \log\left(\frac{4k}{\delta}\right),\] 
the Chernoff bound~\eqref{e_chern2} becomes 
\[ \prob{\normtwo{\mom_1^\dagger}^2  \geq  \frac{1}{ (1-\rho) \ell } }\>  \leq k\left(\frac{\delta}{4k}\right)^{C_l^2/L_1}.\] 
Define the events 
\[
C =  \left\{ \mom_1 \mid \normtwo{\mom_1^\dagger}^2  \geq  \frac{1}{ (1-\rho) \ell }\right\}, 
\qquad
D =  \{ \mom_1 \mid L_1 < C_\ell^2\}. 
\]
Note that $\prob{D^c} \leq \delta/4$ from~\eqref{e_l1}. Then since the exponent is strictly greater than $1$, we have 
\[ \prob{C \mid D }\>  \leq k\left(\frac{\delta}{4k}\right)^{C_l^2/L_1} \leq \delta/4.\]
Using the conditioning argument as before gives $\prob{C} \leq \delta/2$.

\paragraph{3. Combining bounds}  Define the event
\[
E =  \left\{ \mom \mid \normtwo{\mom_1^\dagger}^2 \geq \frac{1}{(1-\rho)\ell}\right\}, \qquad 
F =  \left\{ \mom \mid \normtwo{\mom_2}^2 \geq (1 + C_{\ell,\delta}C_u^2)\ell \right\},
\]
where $C_{\ell,\delta} $ is defined  in Step 1,  $\prob{E} \leq \delta/2$ and from Step 2, $\prob{F} \leq \delta/2$. It can be verified that
\[ \left\{ \mom \mid \normtwo{\mom_2}^2 \normtwo{\mom_1^\dagger}^2\geq \frac{1}{1-\rho}(1 + C_{\ell,\delta}C_u^2) \right\} \subseteq E \cup F,\]
and therefore, we can use the union bound 
\[ \prob{  \normtwo{\mom_2}^2 \normtwo{\mom_1^\dagger}^2 \geq \frac{1}{1-\rho}(1 + C_{\ell,\delta}C_u^2)  } \leq  \prob{E} + \prob{F} \leq \delta. \]  
Plugging in the value of $C_{\ell,\delta}$ and $C_u^2$ from Step 1 gives the desired result.~\qed 
\end{proof}

\bibliography{DIMLS}
\end{document}